\documentclass[a4paper]{amsart}

\usepackage{latexsym,amscd,amssymb,amsmath,amsthm,comment,url,color,mathrsfs}
\usepackage{graphicx,xcolor}
\usepackage{pb-diagram,tikz}
\usepackage[all]{xy}

\title{Lyubeznik numbers of the face rings of simplicial posets}

\author{Kosuke Shibata}
\address{National Institute of Technology, Yonago College, Yonago, Tottori, 683-8502, JAPAN}
\email{shibata@yonago.kosen-ac.jp}

\author{Kohji Yanagawa} 
\address{Department of Mathematics, 
Kansai University, Osaka, 564-8680, Japan.}
\email{yanagawa@kansai-u.ac.jp}

\theoremstyle{plain}
  \newtheorem{thm}{Theorem}[section]
  \newtheorem{prop}[thm]{Proposition}
  \newtheorem{lem}[thm]{Lemma}
  \newtheorem{cor}[thm]{Corollary}
  
\theoremstyle{definition}
  \newtheorem{dfn}[thm]{Definition}
  \newtheorem{exmp}[thm]{Example}
\theoremstyle{remark}
  \newtheorem{rem}[thm]{Remark}
\let\opn\operatorname 

\def\@bothmode#1{\ifmmode #1\else $#1$\fi}
\def\@autopr#1{%
   \@chCount{#1}%
   \ifnum\@tempchn<2 #1\else (#1)\fi
}
\numberwithin{equation}{section}
\def\NN{\mathbb{N}} 
\def\ZZ{\mathbb{Z}} 

\def\MM{\mathbb{M}}
\def\C{\check{C}}
\def\supp{\opn{supp}} 
\def\bc{a} 
\def\chara{\operatorname{char}} 

\def\defopn#1{%
    \@xp\def\csname #1\endcsname{%
        \def\@tempopn{\opn{\csname the#1\endcsname}}%
        \@ifnextchar_{\@opform}{\@opform_{}}%
    }%
}
\def\theE{E}
\def\E{\@ifstar{{}^*\theE}{\theE}} 
\def\gmod{{}^*\!\mod}
\def\gModN{\*Mod_{\ge \b0}}
\def\*Mod{^*\!\Mod}
\def\Mod{\operatorname{Mod}}
\def\mod{\operatorname{mod}}

\def\<{\langle}
\def\>{\rangle}
\def\theD{D}

\def\lexico{\succ_{\text{lex}}}
\def\depth{\operatorname{depth}}
\def\rank{\operatorname{rank}}

\def\chara{\operatorname{char}}

\def\cpx#1{#1^{\bullet}} 
\def\theD{D} 
 
\def\Sh{\operatorname{Sh}} 
\def\sp{\operatorname{Sp\acute{e}}} 
\def\cDx{\cpx {\mathcal D}_X} 
\def\const{\underline{K} } 
\def\mdL{\operatorname{mod} \Lambda}
\def\<{\langle}
\def\>{\rangle}

\def\DC{D^\bullet}

\def\fm{\mathfrak{m}}
\def\fp{\mathfrak{p}}

\def\b0{\mathbf 0}
\def\ba{\mathbf a}
\def\bb{\mathbf b}
\def\bc{\mathbf c}
\def\bd{\mathbf d}
\def\be{\mathbf e}
\def\b0{\mathbf 0}
\def\bt{\mathbf t}
\def\bw{\mathbf w}

\def\11{\mathbf 1}
\def\ua{{\underline{\ba}}}
\def\ub{{\underline{\bb}}}

\def\sm{{\sf m}}

\def\EE{{}^*\! E}

\def\too{\longrightarrow}
\def\wt{\widetilde{t}}

\usepackage{bm}
\def\Sq{\operatorname{Sq}}
\def\Str{\operatorname{Str}}

\def\Ext{\operatorname{Ext}}
\def\Hom{\operatorname{Hom}}
\def\gHom{\operatorname{*\!\Hom}}
\def\gExt{\operatorname{*\!\Ext}}

\def\Nf{\mathcal{N}}
\def\wt{{\widetilde{t}}}

\def\fp{{\mathfrak{p}}}

\def\C{\check{\mathcal C}}
\def\Lf{\mathcal{L}}
\def\Nf{\mathcal{N}}
\def\DC{D^\bullet}
\def\DB{{}^*\! \DC}
\def\IB{{}^*\! I^\bullet_S}

\def\bt{\mathbf t}
\def\too{\longrightarrow}

\def\Im{\operatorname{Im}}
\def\Ker{\operatorname{Ker}}
\def\Coker{\operatorname{Coker}}

\def\injdim{\operatorname{inj\, dim}}

\def\SExt{\mathcal{E}xt}

\def\lm{\operatorname{LM}}
\def\lv{\operatorname{LV}}
\def\nv{\operatorname{NV}}
\def\id{\operatorname{id}}

\def\nf{\operatorname{nf}}

\def\ol{\overline}
\makeatother

\begin{document}

\begin{abstract}
For a {\it simplicial poset} $P$, Stanley assigned the face ring $A_P$, which is the quotient of the polynomial ring $S:=K[t_x \mid x \in P \setminus \{\widehat{0} \}]$ by the ideal $I_P$. This is a generalization of Stanley-Reisner rings, but the grading of $S$ is more involved.  
Building on our previous paper, we further develop and complete the study of the local cohomology modules $H_{I_P}^i(S)$. 
In our categorical framework, $\Ext_S(A_P, S)$ ``encodes" all the information of $H_{I_P}^i(S)$. 
In particular, Lyubeznik numbers $\lambda_{i,j}(A_P)$ can be simply expressed in terms of  $[\Ext_S^*(\Ext_S^*(A_P,S), S)]_\b0$, and depend only on the homeomorphism class of the underlying space of the CW complex associated with $P$ and $\chara(K)$.

\end{abstract}

\maketitle

\section{Introduction}
Let $(R, \fm)$ be a regular local ring of dimension $d$ containing a field, $I$ an ideal of $R$, 
and $A:=R/I$ the quotient ring. The local cohomology module $H_I^i(R)$ is very mysterious 
(it is neither noetherian nor artinian in general).
The {\it Lyubeznik number}  $\lambda_{i,j}(A)$ is defined as the $i$-th Bass number of $H_I^{d-j}(R)$, that is, 
$$\lambda_{i,j}(A):= \mu_i(\fm,  H_I^{d-j}(R))=\dim_{R/\fm} \Ext_R^i(R/\fm, H_I^{d-j}(R)).$$
These numbers are finite and depend only on the ring $A$, $i$ and $j$ (\cite{L93,L97}), and  they contain important topological data (see \cite{L93, GS, B07, BB05}).

For a polynomial ring $S=K[x_1, \ldots, x_n]$ over a field $K$ and a graded ideal $I$, $\lambda_{i,j}(S/I)$ is defined in the same way using the graded maximal ideal $\fm=(x_1, \ldots, x_n)$ and $K=S/\fm$.
The case of Stanley–Reisner ideals (i.e.,  squarefree monomial ideals) has been studied extensively. See \cite{Y01a,AV}.
In this paper, we extend this theory to the ideals associated with {\it simplicial posets}.

We say that a poset $P$ is simplicial, if it is a finite poset with the smallest element $\hat{0}$, and the subposet $\{ y \in P \mid y \leq x  \}$ is isomorphic to a boolean algebra for all $x \in P$. 
If we regard the set of faces of a simplicial complex as a poset by inclusions, it is simplicial. Moreover, any simplicial poset is isomorphic to the face poset of a certain regular cell complex. 

For $x_1, \ldots, x_m \in P$, $[x_1 \vee \cdots \vee x_m ]$ denotes the set of minimal elements of $\{ \, z \in P \mid z \geq x_1, \ldots, x_m \, \}$. 
If $[ x \vee y] \ne \emptyset$, then  
$\{ \, z \in P \mid z \leq x, y \, \}$ has the greatest element $x \wedge y$. 
Let us recall Stanley's construction  of the face ring of $P$. 
For $P^*:=P\backslash \{\hat{0}\}$,   
let  $S := K[ \, t_x \mid x \in P^* \, ]$ be the polynomial ring. Set 
\begin{equation}\label{f_xy}
f_{x,y}:=t_xt_y - t_{x \wedge y} \sum_{z \in [x \vee y]} t_z
\end{equation}
for $x,y \in P^*$. Here we set  $t_{\hat{0}}=1$ for convenience, and 
if $[x \vee y] = \emptyset$, we set $f_{x,y} =t_xt_y$.     
Now consider the ideal  
$$I_P:= (\, f_{x,y} | \ x,y \in P^*   \, ) \subset S$$
and call $A_P:= S/I_P$ the {\it face ring} of $P$.  See Example~\ref{running} below for a concrete illustration. If the simplicial poset $P$ comes from a simplicial complex $\Delta$, then $A_P$ is isomorphic to the Stanley--Reisner ring $K[\Delta]$ of $\Delta$. 
M. Masuda and his co-workers studied $A_P$ from the viewpoint of toric topology, since the equivariant cohomology ring of a torus manifold is closely related to $A_P$ (see \cite{M05,MP06}).

Let $y_1, \ldots, y_n$ be the rank 1 elements of $P$.  For simplicity, set $t_i:=t_{y_i}$. 
We equip $S$ with a $\ZZ^n$-grading by setting $\deg t_x =\sum_{y_i \le x} \be_i \in \NN^n$ (in particular, $\deg t_i=\be_i$), where $\be_i$ denotes the $i$-th coordinate vector. 
Then $I_P$ is a $\ZZ^n$-graded ideal, and $A_P$ is a $\ZZ^n$-graded ring. 
This grading induces the $\ZZ$-grading, which is {\it not} standard in most cases (i.e., $\deg_\ZZ t_x \ge 2$ for $x \in P^* \setminus \{y_1, \ldots, y_n\}$). 
Moreover, we often have $\dim_K S_\ba \ge 2$ for $\ba \in \NN^n$. 
 These points make our study considerably more involved. Let $\*Mod S$ be the category of $\ZZ^n$-graded $S$-modules, and $\gmod S$ its full subcategory consisting of finitely generated modules. 

 A $\ZZ^n$-graded prime ideal of $S$ containing $I_P$ is of the form $\fp_x:=(t_z \mid z \not \le x)+I_P$ for some $x \in P$. 
 Note that $S_x := S/\fp_x\cong  K[t_i \mid y_i \le x]$.  
In the earlier paper \cite{Y11}, the second author introduced {\it squarefree modules} over $A_P$ (here, we regard them as $S$-modules). For example, $S_x$ for $x \in P$ and $A_P$ are squarefree.  
Let $\Sq A_P$ be the full subcategory of $\gmod S$ consisting of squarefree modules. Then $\Sq A_P$ has enough injectives, and an indecomposable injective is isomorphic to $S_x$ for some $x \in P$.  Let $\omega_S$ be the $\ZZ^n$-graded canonical module of $S$, which is a degree shift of the free module $S$. Then $M \in \Sq A_P$ implies $\gExt_S^i(M, \omega_S) \in \Sq A_P$ (Lemma~\ref{Ext is sq}). 

 In the preceding paper \cite{SY}, we constructed  the 
 injective envelope $\EE_x$ of $S_x$ in $\*Mod S$. If $x \ge z$, there are far too many $\ZZ^n$-graded $S$-homomorphisms $\EE_x \to \EE_z$. In \cite{SY}, we defined the {\it clean} homomorphism  $\EE_x \to \EE_z$, which is unique up to scalar multiplication. 
 Roughly speaking, we call a module $M \in \*Mod S$ {\it straight} if it is equipped with an injective presentation
 $$p_M : 0 \too M \too E_M \stackrel{\psi}{\too} E'_M \quad  (\ E_M =  \bigoplus_{x \in P} \EE_x^{l_x}, \  E_M'= \bigoplus_{x \in P} \EE_x^{m_x})$$
 such that $\psi$ is a linear combination of clean maps. A $\ZZ^n$-graded homomorphism $\varphi: M \to N$ is clean if it is induced by a chain map between the presentations $p_M, p_N$ such that $E_M \to E_N$ and $E_M' \to E_N'$ are linear combinations of clean maps (two such diagrams are identified if the maps $M \to N$ coincide). 
Let $\Str_P S$ be the category consisting of straight modules and clean morphisms between them. 
Up to equivalence, we may regard $\Str_P S$ as  a {\it non-full} subcategory of $\*Mod S$. 
For $M \in \Str_P S$, the module 
$\Nf(M):=[(0:_M I_P)]_{\ge \b0}$ belongs to $\Sq A_P$, and $M$ is a $\ZZ^n$-graded essential extension of $\Nf(M)$ (for example, we have $\Nf(\EE_x) \cong S_x$). 
The functor $\Nf$ induces an equivalence $\Sq A_P \cong \Str_P  S$, and $\Str_P S$ is abelian (Theorem~\ref{sq-str}). Let ${}^*\!I_S^\bullet$ be the $\ZZ^n$-graded injective resolution of $\omega_S$, which is a translation of the $\ZZ^n$-graded dualizing complex of $S$. 
Then $\Gamma_{I_P}({}^*\!I_S^\bullet)$ is a cochain complex in $\Str_P S$, where $\Gamma_{I_P}$ is the $I_P$-torsion functor.  Hence the $i$-th cohomology of $\Gamma_{I_P}({}^*\!I_S^\bullet)$ is the local cohomology module  $H_{I_P}^i(\omega_S)$, which is isomorphic to $H_{I_P}^i(S)$ up to a degree shift.

Let $\varphi: M \to N$ be a morphism in $\Str_P S$.  We will show that the kernel, image, and cokernel of $\varphi$ computed in $\Str_P S$ coincide with those computed in $\*Mod S$ 
(Theorems~\ref{Ker is straight} and \ref{Coker is straight}). The proofs are quite involved, and this part is one of the technical cores 
of the present paper. 
This leads to the following results, which are generalizations of the main results in \cite{Y01a}. 

\medskip

\noindent{\bf Theorem~\ref{localcoh-ext}}
{\it The local cohomology module $H_{I_P}^i(\omega_S)$, which is a degree shift of $H_{I_P}^i(S)$,  is a straight module with $\Nf (H_{I_P}^i(\omega_S)) \cong \gExt^i_S(A_P,\omega_S)$.}

\medskip

\noindent{\bf Theorem~\ref{Lyubeznik number}}
{\it Set $p:=\dim S= \#P^*$. The Lyubeznik numbers of $A_P$ are given by
$$\lambda_{i,j}(A_P)=\dim_K [\gExt_S^{p-i}(\gExt_S^{p-j}(A_P,\omega_S),\omega_S)]_{\mathbf{0}}.$$}

\medskip

Recall that a simplicial poset $P$ gives a regular cell complex $\Gamma(P)$. In \cite{Y11}, the second author associated a constructible sheaf $M^+$ on $\Gamma(P)$ with each $M \in \Sq A_P$. 
In this context, the duality ${\mathrm R}\!  \gHom_S(-, \omega_S)$ on $D^b(\Sq A_P)$ corresponds to the Poincar\'e-Verdier duality on the constructible sheaves on $\Gamma(P)$ up to a cohomological shift. 
Now we have the following, which generalizes a main result of \cite{AY}. 

\medskip

\noindent{\bf Theorem~\ref{topological}}
{\it The Lyubeznik numbers $\lambda_{i,j}(A_P)$ depend only on the homeomorphism class of the underlying space of  $\Gamma(P)$ and $\chara(K)$.}

\medskip

\section{Preliminaries}
Throughout the paper, let $P$ be a simplicial poset. 
This section provides a concise summary of the main definitions, constructions, and results from our preceding paper \cite{SY}, which  will be heavily used in the present work.  

\begin{exmp}\label{running}
The poset $P_1$ given by the following Hasse diagram is simplicial. We have $[y_1 \vee y_2]=\{x,z\}$, $y_1 \wedge y_2= \hat{0}$ and $[x \vee z]=\emptyset$ ($x \wedge z$ does not exist). 
Recall the convention that $t_i$ denotes $t_{y_i} \in S$ for $i=1,2$. 
The polynomials defined in \eqref{f_xy} are 
$f_{y_1, y_2}=t_1t_2-(t_x+t_z)$ and $f_{x,z}=t_xt_z$, and the face ring $A_{P_1}$ is given by 
$$S/I_{P_1}=K[t_1, t_2, t_x, t_z]/(t_1t_2-t_x-t_z, t_xt_z)$$
(note that, for a general $P$, if $x$ and $y$ are comparable, then $f_{x,y}=0$.)

\begin{center}
\begin{figure}[ht]
\begin{tikzpicture}[scale=0.7, every node/.style={circle, draw, fill=black, inner sep=0.7pt}]

\node[label=above:$P_1:$,color=white] (a) at (-3.5,0.6) {};
\node[label=above:$x$] (x) at (-2,2) {};
\node[label=above:$z$] (z) at (2,2) {};
\node[label=below:$y_1$] (y_1) at (-2,0) {};
\node[label=below:$y_2$] (y_2) at (2,0) {};
\node[label=below:$\hat{0}$] (0) at (0,-1.2) {};

\draw (x) -- (y_1);
\draw (x) -- (y_2);
\draw (z) -- (y_1);
\draw (z) -- (y_2);
\draw (y_1) -- (0);
\draw (y_2) -- (0);
\end{tikzpicture}
\end{figure}
\end{center}

\end{exmp}

The $\ZZ^n$-graded prime ideals of $S$ containing $I_P$ are of the form 
$$\fp_x:=(t_z \mid z \not \le x)+I_P$$ for some $x \in P$. Note that $\fp_{\hat{0}}$ is the graded maximal ideal $\fm:=(t_z \mid z \in P^*)$. Set $$S_x:=S/\fp_x \cong K[t_i \mid y_i \le x]$$
(note that $S_{\hat{0}} =S/\fm \cong K$). 
For simplicity, we denote the image of $t_i$  in $S_x$ just by $t_i$, not something like $\overline{t_i}$. Let $\pi_x$ be the natural surjection $\pi_x : S\to S_x \, (=S/\fp_x)$. 

Let $\*Mod S$ be the category of $\ZZ^n$-graded $S$-modules and degree-preserving $S$-homomorphisms between them. 
For $M \in \*Mod S$ and $\ba \in \ZZ^n$, 
$M(\ba)$ denotes the $\ZZ^n$-graded $S$-module which coincides with $M$ as the underlying $S$-module
and the grading is given by $M(\ba)_{\bb}=M_{\ba +\bb}$ for $\bb \in \ZZ^n$. 
For $M,N \in \*Mod S$ and $\ba \in \ZZ^n$, we set 
$$[\gHom_S(M,N)]_\ba:=\Hom_{\*Mod S}(M,N(\ba))
$$ 
(in particular,  $[\gHom_S(M,N)]_\b0=\Hom_{\*Mod S}(M,N)$) and 
$$\gHom_S(M,N):=\bigoplus_{\ba \in \ZZ^n}[\gHom_S(M,N)]_\ba.$$ 
We can regard $\gHom_S(M,N) \in \*Mod S$ in the natural way.  
If $M$ is finitely generated, the underlying module of  $\gHom_S(M,N)$ is isomorphic to the usual $\Hom_S(M,N)$.  The same is true for $\gExt^i_S(M,N)$.

For $x \in P$, we will give an explicit description of the injective envelope $\EE_x$ of $S_x=S/\fp_x$ in $\*Mod S$. 
Clearly,  $\EE_{\widehat{0}}=E_S(K)$ is isomorphic to $K[t_z^{-1} \mid z \in P^*]$. 
So we may assume that $r:= \rank_P x \ge 1$ with $\{ i \mid y_i \le x \}=\{1, \ldots, r \}$ (equivalently, $x\in [y_1 \vee \cdots \vee y_r]$). Note that $S_x\cong K[t_1, \ldots, t_r]$ now. 
We also set $P^*_{-x}:=P^* \setminus \{ y_1, \ldots, y_r \}$. 
 
The $\ZZ^n$--graded $K$-vector space $\EE_x=K[t_1^{\pm 1}, \ldots, t_r^{\pm 1}] \otimes_K K[t_z^{-1} \, | \, z \in P^*_{-x}]$ admits the basis  
\begin{equation}\label{monomials in E_x} 
\left \{\alpha = \prod_{i=1}^r t_i^{a_i} \otimes \prod_{z \in P^*_{-x}} t_z^{-a_z} \, \middle |  \, (a_1, \ldots, a_r) \in \ZZ^r, (a_z) \in \NN^{P^*_{-x}} \right \} 
\end{equation}
We call the basis element $\alpha$  a {\it monomial} of $\EE_x$. We endow $\EE_x$ with a $\ZZ^n$--graded $S$-module structure as follows. First, for $w \in P^*$,  set 
$$t_w \cdot \prod_{i=1}^r t_i^{a_i} := 
\pi_x(t_w) \cdot  \prod_{i=1}^r t_i^{a_i},$$
where the multiplication on the right-hand side is computed in  the Laurent polynomial ring $K[t_1^{\pm 1}, \ldots, t_r^{\pm 1}]$. Next, we set 
$$t_i \cdot \prod_{z \in P^*_{-x}} t_z^{-a_z} =0 \qquad  (1 \le i \le r),$$
and 
$$t_w \cdot\prod_{z \in P^*_{-x}} t_z^{-a_z} := t_w^{-a_w+1}\prod_{w \ne z \in P^*_{-x}} t_z^{-a_z}$$
for $w \in P^*_{-x}$. 
Here, if $a_w=0$, then $t_w \cdot \prod_{z \in P^*_{-x}}t_z^{-a_z}=0$.  Finally, through the ring homomorphism $\Delta: S \to  S \otimes_K S$ defined
by $$\Delta(t_z) = t_z \otimes 1 + 1 \otimes t_z$$ for all $z \in P^*$, we regard $\EE_x$ as a $\ZZ^n$-graded $S$-module. 

Let $1_x \in \EE_x$ denote the monomial $1 \otimes 1$ in \eqref{monomials in E_x} (i.e., $a_i=a_z =0$ for all $1 \le i \le r$ and $z \in P^*_{-x}$). The submodule generated by $1_x$ is isomorphic to $S_x$, and we regard $S_x$ as a submodule of $\EE_x$ in this way. Then $\EE_x$ is a *essential extension of $S_x$ (i.e., any non-zero $\ZZ^n$-graded submodule $M \subset \EE_x$ satisfies $M \cap S_x \ne 0$). 

Clearly, $\Hom_S(S_x,S_z) \ne 0$ if and only if $x \ge z$.  
If $x \ge z$, for all $f \in [\gHom_S(S_x,S_z)]_\b0$, we have $f = c\pi$ for some $c \in K$, where $\pi :S_x \to S_z$ is the canonical surjection (note that $\fp_x \subset \fp_z$). Similarly, $\Hom_S(\EE_x,\EE_z) \ne 0$ if and only if $x \ge z$. For $\varphi \in [\gHom_S(\EE_x,\EE_z)]_\b0$, we have $\varphi(1_x)=c 1_z$ for some $c \in K$.  However, $\varphi(1_x)=0$ can occur for  $0 \ne \varphi \in [\gHom_S(\EE_x,\EE_z)]_\b0$. 

Throughout the rest of the paper, whenever we consider a direct sum of $\EE_x$ for various $x \in P$, we fix a direct sum decomposition and a basis (namely, the set of monomials) for each summand.

\begin{dfn}\label{clean map def}
For $x \in P$, consider  the $K$-subspace 
$$\Lf(\EE_x):=\<  \, u \otimes v \in \EE_x 
\mid \text{$u \otimes v$ is a monomial as in \eqref{monomials in E_x}, } 
v \ne 1, \deg(u \otimes v) \ge \b0 \, \>$$
of $\EE_x$.  
For $E=\bigoplus_{x \in P} \EE_x^{\, l_x} \in \*Mod S$ with $l_x \in \NN$, set 
$$\Lf(E):=\bigoplus_{x \in P} \Lf(\EE_x)^{l_x}.$$
\end{dfn}

Clearly, $\Lf(\EE_x)$ depends on the choice of basis (i.e., the set \eqref{monomials in E_x} of monomials) of $\EE_x$.  
It is easy to see that $[\EE_x]_{\ge \b0} = S_x \oplus \Lf(\EE_x)$ as a $\ZZ^n$-graded $K$-vector space.  

\begin{dfn}
For $\ZZ^n$-graded $S$-modules  $E=\bigoplus_{x \in P} \EE_x^{l_x}$ and $F =\bigoplus_{x \in P} \EE_x^{m_x}$ with $l_x, m_x \in \NN$, we say $\psi \in [\gHom_S(E,F)]_{\b0}$ is {\it clean}, if $\psi(\Lf(E)) \subset \Lf(F)$.  
\end{dfn}    

If $x \ge x'$, there is a non-zero clean morphism $\varphi: \EE_x\to \EE_{x'}$, which is unique up to scalar multiplication (\cite[Lemma~3.7 and Proposition~3.10]{SY}). For the reader's convenience, we recall the explicit description of a clean map $\varphi:\EE_x \to \EE_{x'}$ given in \cite[\S 3]{SY}. If $x=x'$, then $\varphi= c \operatorname{id}_{\EE_x}$ for some $c \in K$, so we may assume that $x>x'$. Then it suffices to consider the case $x$ covers $x'$. Indeed, the general case is obtained by composing maps of this type. 
We may assume that $x \in  [y_1 \vee \ldots \vee y_r]$ and $x' \in  [y_1 \vee \ldots \vee y_{r-1}]$. 

Set 
\begin{equation}\label{def of Z}
Z:= \{ z \in P \mid z \le x, z \not \le x', z \ne y_r \}
\end{equation}
and 
\begin{equation}\label{def of W}
W:= P^* \setminus (Z \cup \{y_1, \ldots, y_r\}).
\end{equation}
Consider (Laurent) monomials $\prod_{i=1}^r t_i^{a_i} \in K[t_1^{\pm 1}, \ldots, t_r^{\pm 1}]$, $\bt_Z^{-\bb} :=\prod_{z \in Z} t_z^{-b_z}$ for $\bb =(b_z)_{z \in Z} \in \NN^Z$ and $\bt_W^{-\bc} :=\prod_{w \in W} t_w^{-c_w}$ for $\bc=(c_w)_{w \in W} \in \NN^W$. Take 
\begin{equation}\label{alpha}
\alpha= \left (\prod_{i=1}^r t_i^{a_i} \right )
\otimes \bt_Z^{-\bb} \bt_W^{-\bc} \in \EE_x. 
\end{equation}
For a monomial $\bt_Z^{\bd} \in S$, we have $(e_1, \ldots, e_r) \in \ZZ^r$ with
$$
\pi_x(\bt_Z^{\bd})\cdot \prod_{i=1}^r t_i^{a_i}  =\prod_{i=1}^rt_i^{e_i}.$$
Then we set 
$$(\bt^{\bd}_Z | \alpha):=
\displaystyle \prod_{z \in Z} \binom{b_z+d_z}{b_z} \cdot\left( \prod_{i=1}^{r-1}t_i^{e_i}  \otimes \bt_Z^{-\bb-\bd} \cdot \bt_W^{-\bc} \cdot t_r^{e_r} \right) \in \EE_{x'}.
$$
Here we interpret $(\bt^{\bd}_Z | \alpha)=0$ if $e_r>0$. 
Now we define $\varphi: \EE_x \to \EE_{x'}$ by 
\begin{equation}\label{def of varphi}
\varphi(\alpha):=\sum_{\bd\in \NN^Z} (\bt^{\bd}_Z | \alpha).
\end{equation}
If $\sum_{z \in Z}d_z > -a_r$, then $(\bt^{\bd}_Z | \alpha)=0$. 
Hence, the right side of \eqref{def of varphi} is a finite sum.

\section{Squarefree $A_P$-modules revisited}
 For the reader's convenience, we review the notion of {\it squarefree modules} over $A_P$. We refer the reader to \cite{Y11} for further details. Set 
$$\MM(x):=\bigoplus_{y_i\leq x}\NN \be_i^{x},$$
where $\be_i^x$ is a basis element. So $\MM(x)\cong \NN^{\rank x}$ as additive monoids. For $x, z\in P$ with $x\leq z$, define an injection $\iota_{z,x}:\MM(x)\ni \be_i^x\mapsto \be_i^z\in \MM(z)$ of monoids. Set
$$\MM:=\varinjlim_{x\in P}\MM(x),$$
where the colimit is taken in the category of sets  with respect to the maps $\iota_{z,x}:\MM(x)\to \MM(z)$ for $x,z\in P$ with $x\leq z$. Note that $\MM$ need not be a monoid, and we can regard $\MM(x)$ as a subset of $\MM$. For each $\ua\in \MM$, $\{x\in P\mid \ua\in \MM(x)\}$ has the smallest element, which is denoted by $\sigma(\ua)$. 


We say that a monomial $\sm=\prod_{x\in P^*} t_x^{n_x}\in A_P$ ($n_x \in \NN$) is {\it standard}, if $\{x \in P \mid n_x \ne 0\}$ is a chain. In this case, set $\sigma(\sm):=\max\{x\in P\mid n_x\neq 0\}$. 
For convenience, we also regard $1 \in A_P$ as a standard monomial with $\sigma(1) =\hat{0}$. 
As shown in \cite{St91}, the set of standard monomials forms a $K$-basis of $A_P$. There is a one-to-one correspondence between the elements of $\MM$ and the standard monomials of $A_P$. We denote the standard monomial corresponding to $\ua \in \MM$ by $t^{\ua}$. Clearly, $\sigma(\ua)=\sigma(t^\ua)$. 

Let $\ua, \ub\in \MM$. For $x \in [\sigma(\ua)\vee \sigma(\ub)]\neq \emptyset$, we can regard $\ua, \ub \in \MM(x)$, and have the sum $\ua+\ub \in \MM(x)$, which is denoted by $(\ua+\ub)|_x$. 
Since each $\MM(x)$ is $\ZZ^n$-graded (i.e., we have the additive map $\MM(x) \to \ZZ^n$ defined by $\MM(x) \ni \be_i^x \longmapsto \be_i \in \NN^n \subset \ZZ^n$), 
and $\iota_{z,x}: \MM(x)\to \MM(z)$ is compatible with the $\ZZ^n$-grading, $\MM$ has the $\ZZ^n$-grading. 
If the degree of $\ua$ is $(a_1, \ldots, a_n) \in \ZZ^n$, then we have 
$$\pi_x(t^\ua)=\begin{cases}\prod_{i=1}^n t_i^{a_i} & (\text{if $\sigma(\ua) \le x$}),\\
0 & (\text{otherwise})
\end{cases}$$ in $S_x$.  

\begin{dfn}\label{M-graded}
An $A_P$-module $M$ is said to be {\it $\MM$--graded} if the following are satisfied.
\begin{itemize}
\item[(1)] $M=\bigoplus_{\ua\in \MM} M_{\ua}$ as $K$--vector spaces.
\item[(2)] For $\ua,\ub\in\MM$, we have 
$$t^{\ua}M_{\ub}\subset \bigoplus_{x\in[\sigma(\ua)\vee \sigma(\ub)]} M_{(\ua+\ub)|_x}.$$
Hence, if $[\sigma(\ua)\vee \sigma(\ub)]=\emptyset$, then $t^\ua M_\ub=0$.
\end{itemize}
\end{dfn}

For example, $A_P$ and $S_x$ for $x \in P$ are $\MM$-graded. 
Clearly, an $\MM$-graded $A_P$-module has the natural $\ZZ^n$-graded $A_P$-module structure. 
For $\MM$-graded $A_P$-modules $M,N$, we say an $A_P$-homomorphism $f:M \to N$ is {\it $\MM$-graded}, if $f(M_\ua) \subset N_\ua$ for all $\ua \in \MM$. Note that  an $\MM$-graded homomorphism is $\ZZ^n$-graded.

If $M$ is an $\MM$--graded module, then 
$$M_{\nleq x}:=\bigoplus_{\ua\notin \MM(x)} M_\ua$$
is an $\MM$--graded submodule of $M$ for all $x\in P$, and 
$$M_{\leq x}:=M/M_{\nleq x}$$
is a $\ZZ^{\rank(x)}$--graded module over $S_x\cong K[t_i\mid y_i\leq x]$.

Let $R=K[x_1, \ldots, x_m]$ be a polynomial ring with the standard $\NN^m$-grading (i.e., $\deg x_i =\be_i \in \NN^m$, where $\be_i \in \NN^m$ is the $i$-th standard vector). 
We recall the following notion introduced by the second author in \cite{Y00}. 
We say a finitely generated $\NN^m$-graded $R$-module $M$ is {\it squarefree} if the following conditions are satisfied.
\begin{itemize}
\item[(1)] $M$ is $\NN^m$-graded, that is, $M =\bigoplus_{\ba \in \NN^m} M_\ba$, 
\item[(2)] For all $\ba =(a_1, \ldots, a_m) \in \NN^m$ and all $1 \le i \le m$ with $a_i >0$, the multiplication map $M_\ba \ni y \longmapsto x_i y \in M_{\ba +\be_i}$ is bijective. 
\end{itemize}

\begin{dfn}
We say that an $\MM$--graded $A_P$--module $M$ is {\it squarefree} (over $P$), if $M_{\leq x}$ is a squarefree module over the polynomial ring $S_x\cong K[t_i\mid y_i\leq x]$ for all $x\in P$.
\end{dfn}

It is easy to see that $A_P$ and $S_x$ for $x \in P$ are squarefree. Clearly, a squarefree module $M$ is finitely generated. For $x \in P$ and $\ua \in \MM$ with $\sigma(\ua) =x$, there is a finite dimensional $K$-vector space $M_x$  
with $M_x \cong M_{\ua}$. 
Let $\Sq A_P$ be the category of  squarefree modules and $\MM$-graded homomorphisms between them. 

\begin{lem}\label{ZZ^n-graded ==> MM-graded}
For $M,N \in \Sq A_P$, a $\ZZ^n$-graded $S$-homomorphism $f:M \to N$ is also $\MM$-graded. Hence $\Sq A_P$ is a full subcategory of $\*Mod S$.     
\end{lem}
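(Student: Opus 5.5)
The plan is to identify each $\MM$-graded piece $M_\ua$ inside the $\ZZ^n$-graded piece $M_\ba$ as the joint kernel of multiplication by certain variables $t_{x'}$. Any $S$-linear map preserves such kernels, which gives the $\MM$-gradedness. Fix $\ba\in\NN^n$ (squarefree modules live in $\NN^n$-degrees, so other degrees are irrelevant) and set $F:=\{i\mid a_i>0\}$. If $F=\emptyset$ there is only one $\ua$ of degree $\b0$, and nothing needs proof. Otherwise, every $\ua\in\MM$ of degree $\ba$ satisfies $\sigma(\ua)\in[\bigvee_{i\in F}y_i]$: $\ua$ is built from the basis elements $\be_i^x$ with $i\in F$, so the smallest $x$ with $\ua\in\MM(x)$ has atom set exactly $\{y_i\mid i\in F\}$. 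Conversely, for each $x\in[\bigvee_{i\in F}y_i]$ there is exactly one $\ua$ of degree $\ba$ with $\sigma(\ua)=x$. Hence
$$M_\ba=\bigoplus_{x\in[\bigvee_{i\in F}y_i]}M_{\ua(x)},$$
where $\ua(x)$ denotes that element, and the analogous decomposition holds for $N$.

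The key observation concerns two distinct elements $x,x'\in[\bigvee_{i\in F}y_i]$. For these we have $[x\vee x']=\emptyset$. Indeed, if $z\ge x,x'$, then $x$ and $x'$ would be two distinct elements of the boolean algebra $\{w\le z\}$ having the same set of atoms, which is impossible. Note that $t_{x'}$ is the standard monomial $t^{\ub}$ with $\sigma(\ub)=x'$. By Definition~\ref{M-graded}(2), $t_{x'}M_{\ua(x)}=0$.

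On the other hand, I claim that $t_x$ acts injectively on $M_{\ua(x)}$. By Definition~\ref{M-graded}(2), $t_xM_{\ua(x)}$ lies in $M_{\ua'}$ with $\ua'=(\ua(x)+\ub_x)|_x\in\MM(x)$, where $t_x=t^{\ub_x}$. Since both $\ua(x)$ and $\ua'$ lie in $\MM(x)$, the projection $M\to M_{\le x}$ is injective on these components. In $M_{\le x}$ the element $t_x$ acts as $\pi_x(t_x)=\prod_{i\in F}t_i$. Each $t_i$ with $i\in F$ acts bijectively between degrees where the $i$-th coordinate is positive, by squarefreeness of $M_{\le x}$, and this gives the injectivity. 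Combining the two facts yields the characterization
$$M_{\ua(x)}=\{\,m\in M_\ba \mid t_{x'}m=0 \text{ for all } x'\in[\textstyle\bigvee_{i\in F}y_i],\ x'\neq x\,\}.$$
To check this, write $m=\sum_{x''}m_{x''}$ according to the decomposition above. Then $t_{x'}m=t_{x'}m_{x'}$ by the vanishing statement, and this is zero only if $m_{x'}=0$ by injectivity. The same characterization holds for $N$.

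Now let $f:M\to N$ be $\ZZ^n$-graded and $S$-linear. For $m\in M_{\ua(x)}$ we have $f(m)\in N_\ba$, and for every $x'\ne x$ in $[\bigvee_{i\in F}y_i]$ we get $t_{x'}f(m)=f(t_{x'}m)=0$. By the characterization, $f(m)\in N_{\ua(x)}$, so $f$ is $\MM$-graded. Fullness of $\Sq A_P$ in $\*Mod S$ follows immediately. The only delicate step is the injectivity of $t_x$ on $M_{\ua(x)}$, which requires passing to $M_{\le x}$ and invoking squarefreeness there. The rest is bookkeeping with Definition~\ref{M-graded}.
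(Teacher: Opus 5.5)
Your proof is correct and follows essentially the same route as the paper. Both arguments use two facts: $t_{x'}$ annihilates $M_\ua$ whenever $[\sigma(\ua)\vee x']=\emptyset$, and squarefreeness makes $t_{x'}$ injective on the component of $\sigma$ equal to $x'$. The paper derives a contradiction from a nonzero stray component $\beta_j$ of $f(\alpha)$, whereas you package the same two facts as a joint-kernel characterization of $M_{\ua(x)}$, and you also supply justifications the paper leaves implicit: why distinct elements of $[\bigvee_{i\in F}y_i]$ have no common upper bound, and why $t_x$ is injective on $M_{\ua(x)}$, which you show by passing to $M_{\le x}$.
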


\begin{proof}
For $\ua \in \MM$, set  $$\{\, \ub \in \MM \mid \deg(\ua)=\deg(\ub) \, \} = \{\, \ua_0 (=\ua), \ua_1, \ldots, \ua_m \, \}.$$     
For $\alpha  \in M_\ua$, we have 
$$f(\alpha)=\sum_{i=0}^m\beta_i \quad \text{with} \quad \beta_i \in N_{\ua_i}.$$
Set $x_i:=\sigma(\ua_i) \in P$ for each $i$. 
It suffices to show that $\beta_i =0$ for all $i \ge 1$. 
For contradiction, assume that $\beta_j \ne 0$ for some $j \ge 1$.
Since $[x_0 \vee x_j] =\emptyset$, we have  $t_{x_j} \alpha=0$. 
So $t_{x_j}f(\alpha)=f(t_{x_j}\alpha)=f(0)=0$. 
On the other hand, since $N$ is squarefree, we have $t_{x_j} \beta_j \ne 0$. Since $[x_i \vee x_j] =\emptyset$ for $i \ne j$, we have 
$$t_{x_j}f(\alpha)=t_{x_j} \sum_{i=0}^m\beta_i =  \sum_{i=0}^m t_{x_j}\beta_i =t_{x_j} \beta_j\ne 0.$$
This is a contradiction. 
\end{proof}

By the above lemma, regarding $A_P$-modules as $S$-modules, $\Sq A_P$ is a full subcategory of $\*Mod S$.

\begin{prop}[{\cite[Proposition~3.3]{Y11}}]\label{Sq A_P}
The category $\Sq A_P$ is an abelian category with enough injectives, and an indecomposable injective is isomorphic to $S_x$ for some $x \in P$.     
\end{prop}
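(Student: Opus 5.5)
The plan has three steps: check abelianness directly, build injective envelopes by hand, and classify the indecomposable injectives.

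\textbf{Abelian.} Let $f:M\to N$ be a morphism in $\Sq A_P$. Because $f$ is $\MM$-graded, $\Ker f$, $\Im f$ and $\Coker f$ inherit $\MM$-gradings, and they satisfy condition (2) of Definition~\ref{M-graded}. For each $x\in P$ the functor $M\mapsto M_{\le x}$ is exact on $\MM$-graded modules. Indeed, $M_{\nleq x}$ is simply the sum of the graded pieces $M_\ua$ with $\ua\notin\MM(x)$, so the construction is exact degreewise. Hence $(\Ker f)_{\le x}=\Ker(f_{\le x})$, and similarly for the cokernel. It is a standard fact from \cite{Y00} that squarefree modules over a polynomial ring are closed under kernels and cokernels. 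For example, bijectivity of $t_i:M_\ba\to M_{\ba+\be_i}$ when $a_i>0$ passes to kernels and cokernels by the five lemma applied degreewise. So these modules are squarefree over $P$, and $\Sq A_P$ is an abelian subcategory of the $\MM$-graded modules.

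\textbf{Injectives.} First, $S_x$ is injective in $\Sq A_P$. Consider a squarefree module $M$. An $\MM$-graded map $M\to S_x$ kills $M_{\nleq x}$, since $S_x$ lives in $\MM(x)$. So it factors through $M_{\le x}$, which is a squarefree $S_x$-module. The map is moreover determined by its component in degree $\mathbf 0\in\MM(x)$, because squarefreeness makes $M_\ua\to M_{\ua+\be_i}$ bijective for $i\in\supp$. This gives
$$\Hom_{\Sq}(M,S_x)\cong \Hom_K\bigl((M_{\le x})_{\mathbf 1_x}\text{-type piece}, K\bigr).$$
More precisely, I would identify $\Hom(M,S_x)\cong (M_x)^\vee$, the dual of the vector space $M_x\cong M_\ua$ with $\sigma(\ua)=x$. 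Compatibility of this identification with the maps $t_i$ for $y_i\le x$ is forced by squarefreeness. Since $M\mapsto M_x$ is exact, $\Hom(-,S_x)$ is exact, so $S_x$ is injective.

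\textbf{Enough injectives.} For $M\in\Sq A_P$, form
$$M\to\bigoplus_{x\in P}S_x\otimes_K (M_x)^{\vee\vee}=\bigoplus_x S_x^{\dim M_x},$$
using the identification above. I then need to show this map is injective. Take $0\ne m\in M_\ua$ and put $x=\sigma(\ua)$. Then $m$ survives in $M_{\le x}$, and its image in the component $S_x\otimes M_x$ is nonzero. So finite direct sums of the $S_x$ give enough injectives.

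\textbf{Indecomposables.} Every injective embeds in such a sum and is therefore a direct summand of it. Since $\operatorname{End}(S_x)=K$ is local, the Krull–Schmidt theorem holds in this Hom-finite category. Hence each indecomposable injective is isomorphic to some $S_x$.

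The main obstacle is the identification $\Hom(M,S_x)\cong (M_x)^\vee$. It requires checking carefully that a linear functional on $M_\ua$ (with $\sigma(\ua)=x$) extends uniquely to an $\MM$-graded homomorphism. This involves the multiplication rule $t^\ub M_\ua\subset\bigoplus_{z\in[\sigma\vee\sigma]}M_{(\ua+\ub)|_z}$, and I would handle it by working in $M_{\le x}$ and invoking the polynomial-ring case from \cite{Y00}.
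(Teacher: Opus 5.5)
Your proof is correct, and it takes a different route from the paper. The paper gives no argument for this statement; it only cites \cite[Proposition~3.3]{Y11}. The structural input it invokes nearby (proof of Lemma~\ref{simple in sq}) is the equivalence $\Sq A_P\simeq\mdL$ with finitely generated modules over the incidence algebra $\Lambda$ of $P$. Through that equivalence the proposition reduces to the standard description of injectives over a finite-dimensional algebra. Its indecomposable injectives are indexed by $x\in P$ and match $S_x$, whose pieces $(S_x)_\ua$ are $K$ exactly when $\sigma(\ua)\le x$.

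You instead work inside $\Sq A_P$. You get closure under kernels and cokernels from the exact functors $M\mapsto M_{\le x}$ plus the polynomial-ring case of \cite{Y00}. You get injectivity of $S_x$ from the natural isomorphism $\Hom(M,S_x)\cong (M_x)^*$. That isomorphism is precisely the degree-$\b0$ part of Lemma~\ref{Hom(M,E)}: a degree-$\b0$ map from the $\NN^n$-graded, $I_P$-annihilated module $M$ into $\EE_x$ lands in $\Nf(\EE_x)=S_x$. So your key step is one the paper establishes anyway, by reduction to \cite[Lemma~3.20]{Y01a}. Your route avoids constructing the equivalence and yields an explicit embedding $M\hookrightarrow\bigoplus_x S_x^{\dim_K M_x}$. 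The $\Lambda$-route front-loads the same bookkeeping into the equivalence, and then abelianness, Krull--Schmidt and the classification come for free.

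One sentence needs fixing. A map $f\colon M\to S_x$ is \emph{not} determined by its component in degree $\b0$. For $x\neq\hat 0$ one has $(\omega_{S_x})_{\b0}=0$, yet multiplication by $\prod_{y_i\le x}t_i$ is a nonzero map $\omega_{S_x}\to S_x$. The correct statement is the one you give after ``more precisely'': $f$ is determined by its component in a degree $\ua\in\MM(x)$ with $\sigma(\ua)=x$, for instance $\sum_{y_i\le x}\be_i^x$. The reason is that $S_x$ is a domain, so $f(m)$ is recovered from $f(um)=uf(m)$ for a monomial $u\in S_x$ pushing $m$ into a full-support degree of $\MM(x)$. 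Squarefreeness of $M_{\le x}$ then identifies all full-support pieces with $M_x$. Conversely, every functional on $M_x$ extends, which is what makes $M\mapsto\Hom(M,S_x)$ exact. The rest of your argument goes through as written: the embedding, the splitting of injectives, and Krull--Schmidt in the Hom-finite category $\Sq A_P$ with $\operatorname{End}(S_x)=K$.
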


\begin{prop}
Let $M$ be a $\ZZ^n$--graded $S$--module. Then $M$ is a squarefree module over $P$ if and only if it admits a $\ZZ^n$-graded presentation
\begin{eqnarray}\label{sq-presentation}
0\to M \too \bigoplus_{x \in P} S_x^{l_x} \too  \bigoplus_{x \in P} S_x^{m_x}
\end{eqnarray}
for some $l_x, m_x \in \NN$.
\end{prop}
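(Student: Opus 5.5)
The statement has two directions, and I would handle them separately.

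\emph{Only if.} Suppose $M$ is squarefree. By Proposition~\ref{Sq A_P}, $\Sq A_P$ has enough injectives and its indecomposable injectives are the $S_x$. A squarefree module is finitely generated, so it should have a finite injective hull $0\to M\to I^0=\bigoplus_x S_x^{l_x}$ in $\Sq A_P$. Since $\Sq A_P$ is abelian, the cokernel $I^0/M$ is again squarefree, and we embed it into $I^1=\bigoplus_x S_x^{m_x}$. This gives an exact sequence $0\to M\to I^0\to I^1$ in $\Sq A_P$.

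We still need this sequence to be exact in $\*Mod S$. Kernels in $\Sq A_P$ should agree with kernels computed in $\*Mod S$. A kernel of an $\MM$-graded map between squarefree modules is $\MM$-graded, and it is squarefree because the restrictions to $(-)_{\le x}$ commute with kernels and squarefreeness over a polynomial ring is closed under kernels. This is essentially the content of \cite[Proposition~3.3]{Y11}, and I would cite it.

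\emph{If.} Given a $\ZZ^n$-graded exact sequence $0\to M\to E\xrightarrow{\ f\ } F$ with $E=\bigoplus S_x^{l_x}$ and $F=\bigoplus S_x^{m_x}$, we have $M\cong\Ker f$.
\begin{enumerate}
\item Each $S_x$ is squarefree, so $E$ and $F$ lie in $\Sq A_P$.
\item By Lemma~\ref{ZZ^n-graded ==> MM-graded}, $f$ is automatically $\MM$-graded.
\item Hence $\Ker f$ is an $\MM$-graded submodule with $(\Ker f)_\ua=\Ker(f_\ua)$.
\item It remains to check that $\Ker f$ is squarefree, i.e. that $(\Ker f)_{\le x}$ is a squarefree $S_x$-module for each $x$.
\end{enumerate}

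Step 4 is the main obstacle, because $(-)_{\le x}=(-)/(-)_{\nleq x}$ is a quotient, and quotients need not preserve kernels. I would instead work degreewise. For $\ua\in\MM(x)$, the component $[(\Ker f)_{\le x}]_\ua$ equals $(\Ker f)_\ua=\Ker(E_\ua\to F_\ua)$. Multiplication by $t_i$ with $y_i\le x$ maps $\MM(x)$-degrees into $\MM(x)$-degrees, landing in the summand $(\ua+\be_i)|_x$. So the module structure of $(\Ker f)_{\le x}$ is the restriction of that of $E_{\le x}$, and $(\Ker f)_{\le x}=\Ker(E_{\le x}\to F_{\le x})$ as graded $S_x$-modules.

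Now take $a_i>0$. The map $t_i\colon E_\ua\to E_{\ua+\be_i}$ is bijective, and likewise on $F$, and both commute with $f$. It follows that $t_i$ restricts to a bijection between the kernels, so $(\Ker f)_{\le x}$ is squarefree, as is $\Ker f$. Finite generation is then automatic.
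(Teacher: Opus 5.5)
Your proof is correct and follows the paper's route. The paper's proof only cites the lemma that $\ZZ^n$-graded maps between squarefree modules are $\MM$-graded and the fact that $\Sq A_P$ is abelian with enough injectives whose indecomposables are the $S_x$ (\cite[Proposition~3.3]{Y11}); you use exactly these for the ``if'' and ``only if'' directions, and in addition you spell out the degreewise check that kernels of $\MM$-graded maps between squarefree modules are squarefree (so kernels in $\Sq A_P$ agree with those in ${}^*\!\Mod S$), which the paper leaves implicit.
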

\begin{proof}
The assertion follows from Lemma~\ref{ZZ^n-graded ==> MM-graded} and Proposition~\ref{Sq A_P}. 
\end{proof}



For $M=\bigoplus_{\ba \in \ZZ^n} M_\ba \in \*Mod S$, set $M_{\ge \b0}:= \bigoplus_{\ba \in \NN^n} M_\ba \in \*Mod S$. We define the functor 
$\Nf :\*Mod S \to \*Mod S$ by 
$$\Nf(M):=[ \gHom_S(A_P,M)]_{\ge \b0}.$$ 
Since $[ \gHom_S(A_P,M)]_{\ge \b0} \cong (0:_M I_P)_{\ge \b0}$, we regard $\Nf(M)$ as a submodule of $M$. 
Since the functor $\gHom_S(A_P,-)$ is left exact, so is $\Nf(-)$.

\begin{lem}\label{extension to E} 
For $E =\bigoplus_{x \in P} \EE_x^{l_x}$,  
$E' =\bigoplus_{x \in P} \EE_x^{m_x}$ and $E'' =\bigoplus_{x \in P} \EE_x^{n_x}$, 
the following hold.
\begin{itemize}
\item[(1)] $\Nf(E)=\bigoplus_{x \in P} S_x^{l_x}$ and 
$[E]_{\ge \b0}=\Nf(E) \oplus \Lf(E)$. 
\item[(2)] A $\ZZ^n$-graded $S$-homomorphism $\varphi: \Nf(E)\to \Nf(E')$ can be uniquely extended to a clean homomorphism $\widetilde{\varphi}: E\to E'$.   
\item[(3)] For $\ZZ^n$-graded $S$-homomorphisms  $\varphi: \Nf(E)\to \Nf(E')$ and $\psi: \Nf(E')\to \Nf(E'')$, we have 
$(\psi \circ \varphi)^\sim=\widetilde{\psi}\circ \widetilde{\varphi}$ with the notation of (2). 
\end{itemize}   
\end{lem}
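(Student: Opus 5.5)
Most of this lemma should be a repackaging of the results of \cite{SY} recalled in Section~2, so the plan is to reduce everything to a single summand $\EE_x$ and invoke them. Since $\Nf$, $\Lf$ and $[-]_{\ge \b0}$ all commute with finite direct sums, it suffices in (1) to treat $E=\EE_x$. The decomposition $[\EE_x]_{\ge\b0}=S_x\oplus\Lf(\EE_x)$ as $\ZZ^n$-graded $K$-spaces was observed right after Definition~\ref{clean map def}. It remains to show $\Nf(\EE_x)=(0:_{\EE_x}I_P)_{\ge\b0}=S_x$.

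The inclusion $S_x\subset(0:I_P)$ holds because $I_P\subset\fp_x$ annihilates $S_x\cong S/\fp_x$. For the reverse inclusion, I would use that $\EE_x$ is the $\ZZ^n$-graded injective hull of $S/\fp_x$, so $(0:_{\EE_x}I_P)$ is the injective hull of $S_x$ in the category of graded $A_P$-modules. Then I would compute directly with monomials. Take a nonzero element $\alpha$ of degree $\ge\b0$ that involves a monomial $u\otimes v$ with $v\ne1$; such a monomial involves some $t_z^{-a_z}$ with $z\in P^*_{-x}$ and $a_z>0$. I would find $f\in I_P$ with $f\alpha\ne0$. The candidate is $f=f_{z,w}$ for a suitable $w$, or, when $z\not\le x$, the relation obtained from a pair whose join is empty; the point is to produce a nonzero component in $S_x\subset\EE_x$.

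The nonnegativity of the degree is what forces this to work. Since $\deg t_z$ involves coordinates that $u$ must compensate, a factor $t_z^{-a_z}$ with $z\le x$ would mean that $t_{z}\prod t_i$-type relations (e.g.\ $f_{y_i,y_j}$ with $z\in[y_i\vee y_j]$) act nontrivially. This is the \textbf{main obstacle}, and I expect to argue by choosing a minimal $z$ and an appropriate relation, inducting on total negative degree $\sum a_z$. Alternatively I would simply cite the corresponding computation in \cite{SY}, which is where $\Nf(\EE_x)\cong S_x$ is stated.

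For (2), decompose $\varphi$ into components $S_x\to S_{x'}$. By Section~2 each component is $c\pi$ with $c\in K$, nonzero only if $x\ge x'$. Extend each component by $c$ times the clean map $\EE_x\to\EE_{x'}$, which exists and is unique up to scalar by \cite[Lemma~3.7, Proposition~3.10]{SY}. This extension restricts to $c\pi$ on $S_x$, since the clean map sends $1_x\mapsto 1_{x'}$ (after normalization). It is also clean. For uniqueness, suppose two clean extensions are given. Their difference $\delta$ is clean and vanishes on $\Nf(E)$. Then $\delta(\Lf(E))\subset\Lf(E')$, and by the uniqueness-up-to-scalar statement each component of $\delta$ is $c'$ times the clean map. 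Since $\delta(1_x)=0$ forces $c'=0$, we get $\delta=0$.

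For (3), $\widetilde\psi\circ\widetilde\varphi$ is clean: it preserves $\Lf$ because each factor does. It also extends $\psi\circ\varphi$, since $\widetilde\varphi(\Nf(E))=\varphi(\Nf(E))\subset\Nf(E')$. So (3) follows from the uniqueness in (2).
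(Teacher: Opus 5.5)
\textbf{Verdict.} There is a genuine gap, in part (1). Parts (2) and (3) are correct. Your (3) is exactly the paper's argument: the composite of clean maps is clean, so uniqueness in (2) applies. For (2), which the paper only calls easy, your reasoning is sound: split into components $c\pi$, use the normalization $1_x\mapsto 1_{x'}$ of the canonical clean map, and get uniqueness because every clean map $\EE_x\to\EE_{x'}$ is a scalar multiple of the canonical one and hence is detected on $1_x$.

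\textbf{The gap in (1).} The only non-formal content of (1) is the inclusion $(0:_{\EE_x}I_P)_{\ge\b0}\subseteq S_x$, equivalently $(0:_{\EE_x}I_P)\cap\Lf(\EE_x)=0$. You leave this as an expected induction or a citation, so (1) is not actually proved.

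Moreover, the mechanism you sketch fails as stated: a relation $f_{y_i,y_j}$ with $z\in[y_i\vee y_j]$ can annihilate elements of $\Lf(\EE_x)$. Take Example~\ref{running} and work in $\EE_x$. Write $\delta^x_w(u\otimes v)=u\otimes t_wv$. Then $t_1t_2$ acts by multiplication on the Laurent factor, $t_x$ acts as $t_1t_2+\delta^x_x$, and $t_z$ acts as $\delta^x_z$. Hence $f_{y_1,y_2}$ acts as $-(\delta^x_x+\delta^x_z)$, and it kills
\[
\alpha=t_1t_2\otimes t_x^{-1}-t_1t_2\otimes t_z^{-1}\in[\Lf(\EE_x)]_{\b0}.
\]
Only $f_{x,z}=t_xt_z$ detects $\alpha$: $t_xt_z\alpha=-t_1^2t_2^2\otimes 1$. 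Also, aiming for ``a nonzero component in $S_x$'' is unnecessary; any $f\in I_P$ with $f\alpha\ne0$ suffices.

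\textbf{How to close it.} Two relations suffice, used in the right order. Let $0\ne\alpha\in\Lf(\EE_x)$ be homogeneous of degree $\ge\b0$ with $I_P\alpha=0$.
\begin{itemize}
\item[(0)] Compare degrees in the coordinates $j$ with $y_j\not\le x$. Every $t_w$ occurring in $\alpha$ with a negative exponent then has $w\in P_{\ge2}$ and all atoms of $w$ below $x$. Let $w_0\le x$ be the unique element with the same atoms as $w$, and put $p_w:=\prod_{y_i\le w}t_i$. On $\EE_x$, $t_w$ acts as $p_w+\delta^x_w$ if $w\le x$, and as $\delta^x_w$ otherwise.
\item[(a)] Suppose some such $w$ satisfies $w\not\le x$. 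Then $w\ne w_0$ have the same atoms, so they have no common upper bound (a boolean interval cannot contain both). Hence $f_{w,w_0}=t_wt_{w_0}\in I_P$, and it acts on $\EE_x$ as $(p_w+\delta^x_{w_0})\circ\delta^x_w$. Here $\delta^x_w\alpha\ne0$. Also $p_w+\delta^x_{w_0}$ is injective: compare the components with the highest power of $t_{w_0}^{-1}$. So $f_{w,w_0}\alpha\ne0$, a contradiction.
\item[(b)] Otherwise every such $w$ satisfies $w\le x$; pick one. Induction on rank using the relations $f_{v,y_k}$ shows that $h_w:=p_w-\sum_{w'}t_{w'}\in I_P$, the sum running over all $w'$ with the same atoms as $w$. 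Every $w'\ne w$ satisfies $w'\not\le x$, and no $t_{w'}$ occurs negatively in $\alpha$, so $h_w\alpha=-\delta^x_w\alpha\ne0$, a contradiction.
\end{itemize}
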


\begin{proof}
(1) and (2) are easy. (3) Since the composition of clean maps is clean, $\widetilde{\psi}\circ \widetilde{\varphi}:E\to E''$ is clean. Therefore, the assertion follows from the uniqueness given in (2). 
\end{proof}

Let $\*Mod A_P$ be the category of $\ZZ^n$-graded $A_P$-modules, and $\gModN A_P$ its full subcategory consisting of $\NN^n$-graded modules. Then $\gModN A_P$ is an abelian subcategory of $\*Mod A_P$, closed under kernels, cokernels, and extensions.

\begin{lem}\label{graded enough injective}
The category $\gModN A_P$ has enough injectives.
For any $x\in P$ and $\ba\in\ZZ^n$,
$\Nf(\EE_x(\ba))$ is an injective object in $\gModN A_P$.
In particular, $S_x$ is injective in $\gModN A_P$.
\end{lem}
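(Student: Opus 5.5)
The plan is to use the standard fact that a right adjoint of an exact functor preserves injectives. Write $\iota:\gModN A_P \to \*Mod S$ for the inclusion. Every object of $\gModN A_P$ is a $\ZZ^n$-graded $S$-module killed by $I_P$, and $\iota$ is exact. I will show that $\Nf$, restricted to a functor $\*Mod S \to \gModN A_P$, is right adjoint to $\iota$. Then, since $\EE_x(\ba)$ is injective in $\*Mod S$ (it is a shift of the injective envelope $\EE_x$ of $S_x$ from \cite{SY}), $\Nf(\EE_x(\ba))$ will be injective in $\gModN A_P$.

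\emph{Step 1: the adjunction.} Let $N\in\gModN A_P$ and $M\in\*Mod S$, and let $f:N\to M$ be a $\ZZ^n$-graded $S$-homomorphism. Because $I_PN=0$, the image of $f$ lies in $(0:_M I_P)$. Because $N=N_{\ge\b0}$ and $f$ preserves degrees, the image lies in $(0:_MI_P)_{\ge\b0}=\Nf(M)$. Hence $\Hom_{\*Mod S}(N,M)=\Hom_{\gModN A_P}(N,\Nf(M))$, and this identification is natural in both variables. Combining it with the exactness of $\iota$ and the injectivity of $\EE_x(\ba)$ in $\*Mod S$, the functor $\Hom_{\gModN A_P}(-,\Nf(\EE_x(\ba)))$ is exact. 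This proves the second assertion.

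\emph{Step 2: the special case $S_x$.} By Lemma~\ref{extension to E}(1), $\Nf(\EE_x)=S_x$. Taking $\ba=\b0$ in Step 1 shows that $S_x$ is injective in $\gModN A_P$.

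\emph{Step 3: enough injectives.} Let $N\in\gModN A_P$. Since $\*Mod S$ has enough injectives, $N$ embeds into an injective $E$ of $\*Mod S$. By the adjunction, the embedding factors as $N\hookrightarrow \Nf(E)\subset E$, and $\Nf(E)$ is injective in $\gModN A_P$ by the argument of Step 1, which uses only the injectivity of $E$ in $\*Mod S$. So every object embeds into an injective, and $\gModN A_P$ has enough injectives.

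The only point needing care is that the target of $\Nf$ really lies in $\gModN A_P$. The module $(0:_M I_P)$ is an $A_P$-submodule of $M$, and truncating to degrees $\ge\b0$ keeps it a submodule, because multiplication by elements of $S$, all of which have degrees in $\NN^n$, does not leave the region $\NN^n$. So this step is routine, and I expect no real obstacle in the proof.
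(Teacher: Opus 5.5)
Your proof is correct and is essentially the paper's argument. The paper takes a monomorphism $f\colon M\to N$ and a map $g\colon M\to\Nf(E)$ in $\gModN A_P$, extends $g$ to $g'\colon N\to E$ using the injectivity of $E$ in $\*Mod S$, and notes that $g'$ lands in $\Nf(E)$ because $\Nf(N)=N$. That is exactly the unpacked form of your adjunction $\iota\dashv\Nf$ with $\iota$ exact. Your enough-injectives step and the specialization $\Nf(\EE_x)=S_x$ coincide with the paper's.
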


\begin{proof}
We first show that $\Nf(E)$ is injective in $\gModN A_P$
for every injective module $E$ in $\*Mod S$.
Take an injection  
$f \in [\gHom_{A_P}(M,N)]_{\b0}=[\gHom_S(M,N)]_{\b0}$,
and a morphism 
$$g \in [\gHom_S(M,\Nf(E))]_{\b0}=[\gHom_{A_P}(M,\Nf(E))]_{\b0}.$$ 
Note that $\Nf(M)=M$, $\Nf(N)=N$ and $\Nf(f)=f$.
Since $E$ is injective in $\*Mod S$,
there exists a morphism $g' \in [\gHom_{S}(N,E)]_{\b0}$
such that $g'\circ f=g$.
 Therefore, we have $\Nf(g') \in [\gHom_{A_P}(N,\Nf(E))]_{\b0}$ and $g=\Nf(g')\circ f$. 
Thus $g'$ induces a morphism $N\to\Nf(E)$ extending $g$.
Hence $\Nf(E)$ is injective in $\gModN A_P$.

Now let $M\in\gModN A_P$.
Since $\*Mod S$ has enough injectives,
there exists a graded embedding $M\hookrightarrow E$
with $E$ injective in $\*Mod S$.
Since $\Nf(M)=M$, this embedding factors through $\Nf(E)$.
Therefore, $\gModN A_P$ has enough injectives.

Finally, $\EE_x(\ba)$ is injective in $\*Mod S$
for every $x\in P$ and $\ba\in\ZZ^n$.
Hence $\Nf(\EE_x(\ba))$ is injective in $\gModN A_P$.
Taking $\ba=\b0$ and using $\Nf(\EE_x)=S_x$,
we obtain the last assertion.
\end{proof}

\begin{lem}\label{squarefree-extension}
Let $0 \to M \xrightarrow{f} N \xrightarrow{g} L \to 0$ be a short exact sequence of $\ZZ^n$-graded $A_P$-modules. If $M$ and $L$ are squarefree, then so is $N$.
\end{lem}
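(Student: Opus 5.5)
The plan is to use the presentation characterization of squarefree modules (the proposition giving \eqref{sq-presentation}) together with the injectivity of $S_x$ in $\gModN A_P$ (Lemma~\ref{graded enough injective}), via a horseshoe-type argument.

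First, $N$ lies in $\gModN A_P$: since $M$ and $L$ are $\NN^n$-graded, $N_\ba$ sits in $0\to M_\ba\to N_\ba\to L_\ba\to 0$, so $N_\ba=0$ for $\ba\notin\NN^n$. Next, by Proposition~\ref{Sq A_P}, take squarefree injective presentations
$$0\to M\to I_M^0\to I_M^1,\qquad 0\to L\to I_L^0\to I_L^1,$$
with $I^j_M, I^j_L$ finite direct sums of $S_x$'s. By Lemma~\ref{graded enough injective}, each $S_x$ is injective in $\gModN A_P$, so $I^0_M$ is injective there. Hence $M\to I^0_M$ extends along $f$ to a morphism $N\to I_M^0$, and this gives a monomorphism $N\to I_M^0\oplus I_L^0$, as in the usual horseshoe lemma.

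The cokernel $C$ of $N\to I_M^0\oplus I_L^0$ sits in a short exact sequence $0\to \Coker(M\to I_M^0)\to C\to \Coker(L\to I_L^0)\to 0$. The outer terms embed into $I_M^1$ and $I_L^1$, and they lie in $\gModN A_P$. Repeating the extension argument once more, again using the injectivity of $I_M^1$ in $\gModN A_P$, gives an embedding $C\hookrightarrow I_M^1\oplus I_L^1$. This yields an exact sequence
$$0\to N\to I_M^0\oplus I_L^0\to I_M^1\oplus I_L^1,$$
which is a $\ZZ^n$-graded presentation of the form \eqref{sq-presentation}. By the proposition, $N$ is squarefree.

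The main point to watch is that the proposition characterizing squarefree modules applies to an arbitrary $\ZZ^n$-graded $S$-module with such a presentation; this is its statement, so no $\MM$-grading on $N$ needs to be constructed by hand. The only delicate input is that the extensions are taken inside $\gModN A_P$ rather than $\*Mod S$, where $S_x$ is not injective. That is exactly why we first check that $N$ and the intermediate cokernels are $\NN^n$-graded $A_P$-modules.
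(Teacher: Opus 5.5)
Your proposal is correct and is essentially the paper's proof. The paper also observes that the squarefree presentations of $M$ and $L$ are injective presentations in the category of $\mathbb{N}^n$-graded $A_P$-modules (by Lemma~\ref{graded enough injective}), notes that $N$ lies in this category, and applies the Horseshoe Lemma to obtain $0\to N\to I_M\oplus I_L\to I'_M\oplus I'_L$; you simply write out the two steps of the horseshoe construction that the paper cites in one line.
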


\begin{proof}
We assume that $M$ and $L$ are squarefree modules with presentations as in \eqref{sq-presentation},   
$0\to M\xrightarrow{d_M} I_M\xrightarrow{d_M'} I_M'$ and $0\to L\xrightarrow{d_L} I_L\xrightarrow{d_L'} I_L'$.  
By Lemma \ref{graded enough injective}, these are injective presentations also in $\*Mod_{\geq \b0} A_P$. 
We also remark that $N \in \*Mod_{\geq \b0} A_P$.
By the Horseshoe Lemma, we have the presentation $0\to N\xrightarrow{d_N} I_N\xrightarrow{d_N'} I_N'$ for $I_N:=I_M\oplus I_L$ and $I_N':=I_M'\oplus I_L'$. Therefore, $N$ is squarefree.
\end{proof}

\begin{prop}
The derived category $D^b(\Sq A_P)$ can be seen as a full subcategory of $D^b(\*Mod A_P)$, and we have the category equivalence $\theD^b(\Sq A_P) \cong \theD^b_{\Sq A_P} (\*Mod A_P)$.
\end{prop}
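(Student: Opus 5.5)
The plan is to use the standard criterion: if $\mathcal A$ is an abelian subcategory of an abelian category $\mathcal B$, then $D^b(\mathcal A)\to D^b_{\mathcal A}(\mathcal B)$ is an equivalence whenever $\mathcal A$ is closed under extensions in $\mathcal B$ and every object of $\mathcal B$ containing an object $M\in\mathcal A$ admits a map to some $I\in\mathcal A$ whose restriction to $M$ is injective. A natural candidate for $I$ is an injective of $\mathcal A$ that is also injective in $\mathcal B$; then monomorphisms $M\hookrightarrow I$ from $\mathcal A$ extend to any $\mathcal B$-object containing $M$. Here $\mathcal A=\Sq A_P$ and $\mathcal B=\*Mod A_P$. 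By Lemma~\ref{ZZ^n-graded ==> MM-graded}, $\Sq A_P$ is a full subcategory, and it is abelian by Proposition~\ref{Sq A_P}. The plan is to factor through $\gModN A_P$, which is the key intermediate category.

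Step 1. I will show that $\Sq A_P$ is closed under kernels and cokernels taken in $\*Mod A_P$, so that it is an exact abelian subcategory. Take a morphism $f:M\to N$ in $\Sq A_P$. Restricting to $(-)_{\le x}$ gives a morphism of squarefree $S_x$-modules. Since squarefreeness over a polynomial ring is closed under kernels and cokernels (the bijectivity conditions are preserved), it remains only to check that $(-)_{\le x}$ commutes with kernels and cokernels. This holds because the $\MM$-graded pieces split, so $(-)_{\le x}$ is exact on $\MM$-graded modules. Closure under extensions is Lemma~\ref{squarefree-extension}.

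Step 2. I will compare $\Sq A_P$ with $\gModN A_P$. By Lemma~\ref{graded enough injective}, $S_x$ is injective in $\gModN A_P$, and by Proposition~\ref{Sq A_P} every squarefree module embeds into a finite sum of copies of the $S_x$. Hence, for $M\in\Sq A_P$ contained in $N\in\gModN A_P$, the embedding $M\hookrightarrow\bigoplus S_x^{l_x}$ extends to $N$. This gives the criterion above, and therefore the equivalence $D^b(\Sq A_P)\cong D^b_{\Sq}(\gModN A_P)$. Concretely, any bounded complex in $\gModN A_P$ with squarefree cohomology is quasi-isomorphic to a bounded complex of squarefree modules. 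I would build this by the usual truncation and induction on cohomological length, using the injectives $S_x$ to resolve. Since injective dimension in $\Sq A_P$ is finite, as for squarefree modules, boundedness is preserved.

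Step 3. I will compare $\gModN A_P$ with $\*Mod A_P$. Here $(-)_{\ge\b0}$ is an exact functor right adjoint to the inclusion, since it is a degreewise truncation of graded vector spaces. This is the step I expect to be the main obstacle, because $A_P$-submodule closure for $(-)_{\ge\b0}$ requires $A_P$ to be $\NN^n$-graded, which it is. Given a complex $X$ in $\*Mod A_P$ with cohomology in $\Sq A_P\subset\gModN A_P$, the natural map $X_{\ge\b0}\to X$ is a quasi-isomorphism, because exactness gives $H(X_{\ge\b0})=H(X)_{\ge\b0}=H(X)$. Hence $D^b_{\gModN}(\*Mod A_P)$ is equivalent to $D^b(\gModN A_P)$, with $(-)_{\ge\b0}$ as quasi-inverse and fully faithfulness coming from the adjunction. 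Composing with Step 2 gives $D^b(\Sq A_P)\cong D^b_{\Sq A_P}(\*Mod A_P)$, and in particular $D^b(\Sq A_P)$ is a full subcategory of $D^b(\*Mod A_P)$.
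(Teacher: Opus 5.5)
Your proof is correct. Its overall architecture matches the paper's: you factor through $\gModN A_P$. For the inclusion $\Sq A_P\subset \gModN A_P$ you use that the injectives $S_x$ of $\Sq A_P$ remain injective in $\gModN A_P$ (Lemma~\ref{graded enough injective}). That is exactly the hypothesis of \cite[Proposition 4.8]{Ha}, which the paper invokes for the same step.

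The genuine difference is in the step $\gModN A_P\subset \*Mod A_P$.

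\emph{The paper's route.} It argues with projectives: direct sums of $A_P(-\ba)$, $\ba\in\NN^n$, are projective in both categories. It then applies the dual (projective) version of Hartshorne's criterion, i.e.\ the exercise following \cite[Proposition 4.8]{Ha}.

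\emph{Your route.} You observe that $(-)_{\ge\b0}$ is an exact right adjoint of the inclusion, with $(-)_{\ge\b0}\circ\iota=\mathrm{id}$. Hence the induced adjunction on $D^b$ has invertible unit, which gives full faithfulness. The counit $X_{\ge\b0}\to X$ is a quasi-isomorphism whenever the cohomology of $X$ is $\NN^n$-graded, which gives the essential image.

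\emph{What each buys.} Your argument is more elementary and self-contained. It uses only that $A_P$ is $\NN^n$-graded, it produces an explicit quasi-inverse, and it never needs to identify the projectives of $\gModN A_P$. The paper's version is shorter on the page, because both steps are instances of the same cited criterion and its dual.

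Your Step 1 also justifies something the paper only asserts: that $\Sq A_P$ is closed under kernels and cokernels. You get this from Lemma~\ref{ZZ^n-graded ==> MM-graded} together with exactness of $(-)_{\le x}$ on $\MM$-graded modules.

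One small remark: finiteness of injective dimension in $\Sq A_P$ is not needed in Step 2. The criterion already gives an equivalence on $D^+$, and this simply restricts to complexes with bounded cohomology.
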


Here $\theD^b_{\Sq A_P} (\*Mod A_P)$ denotes the full subcategory of $\theD^b(\*Mod A_P)$ consisting of $C^\bullet$ such that $H^i(C^\bullet) \in \Sq A_P$ for all $i$. We use analogous notation for other pairs of abelian categories.

\begin{proof}
The category $\gModN A_P$ has enough projectives, and a projective object, which is a  (possibly infinite) direct sum  of $A_P(-\ba)$ for $\ba\in \NN^n$, is also projective in $\*Mod A_P$. By the exercise right after \cite[Proposition 4.8]{Ha},  $$D^b_{\gModN A_P}(\*Mod A_P)\cong D^b(\gModN A_P)$$ and $D^b_{\gModN A_P}(\*Mod A_P)$ is a full subcategory of $D^b(\*Mod A_P)$. 

On the other hand, $\Sq A_P$ is an abelian full subcategory of $\gModN A_P$ closed under kernels, cokernels, and extensions. Moreover, it has enough injectives, and injective objects (i.e., finite direct sums of $S_x$'s for $x\in P$) are also injective in $\gModN A_P$.
By \cite[Proposition 4.8]{Ha}, we have $D^b_{\Sq A_P}(\gModN A_P)\cong D^b(\Sq A_P)$ and the left side is a full subcategory of $D^b(\gModN A_P)$. So the assertion follows.
\end{proof}

For $\bw:=\sum_{x \in P^*} \deg t_x \in \NN^n$, $\omega_S:=S(-\bw)$ is the $\ZZ^n$-graded canonical module of $S$.

\begin{lem}\label{simple in sq}
Let $\omega_{S_x}:= S_x(-\deg t_x)$ be the $\ZZ^n$-graded canonical module of $S_x$. 
\begin{enumerate}
\item A squarefree $A_P$-module $M$ is simple in $\Sq A_P$ (i.e. there is no non-trivial squarefree submodule) if and only if $M\cong \omega_{S_x}$
for some $x\in P$.
\item An $\MM$-graded $A_P$-module $M$ is squarefree if and only if there is a chain of $\MM$-graded submodules $0=M_0\subset M_1\subset \cdots \subset M_t=M$ such that each quotient $M_i/M_{i-1}$ is isomorphic to $\omega_{S_x}$ for some $x\in P$.
\end{enumerate}
\end{lem}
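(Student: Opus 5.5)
We argue on $\MM$-graded pieces, using the finite-dimensional spaces $M_x$ attached to a squarefree module. For $M\in\Sq A_P$ and $x\in P$, set $\ua_x:=\sum_{y_i\le x}\be_i^x\in\MM(x)$, the squarefree degree with $\sigma(\ua_x)=x$. Then $M_x\cong M_{\ua_x}$. Because $M_{\le x}$ is squarefree over $S_x$, the whole module is determined by the vector spaces $M_x$ ($x\in P$) together with the structure maps $M_{x'}\to M_x$ for $x'\le x$. These maps are induced by multiplication by suitable $t_i$; equivalently, they are the maps $(M_{\le x'})_{\ua_{x'}}\to (M_{\le x})_{\ua_x}$.

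For (1), first note that $\omega_{S_x}=S_x(-\deg t_x)$ is squarefree. Its only nonzero pieces are at $\ub\in\MM(x)$ with $\ub\ge\ua_x$, so $(\omega_{S_x})_z=K$ if $z=x$ and $0$ otherwise. A squarefree submodule $N\subset\omega_{S_x}$ is $\MM$-graded, so each $N_z$ is a subspace of this. If $N_x=0$, the squarefree condition on $N_{\le x}$ forces $N=0$, because every piece of $N$ with $\sigma=x$ is isomorphic to $N_x$. If $N_x=K$, then $N=\omega_{S_x}$. Hence $\omega_{S_x}$ is simple.

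Conversely, let $M\ne0$ be simple. Choose $x$ maximal in $P$ with $M_x\ne0$ (possible since $P$ is finite) and pick $0\ne m\in M_{\ua_x}$. Consider $N:=\bigoplus_{\ub\in\MM(x),\,\ub\ge\ua_x}K\,(\text{translates of }m)$, i.e.\ the $S$-submodule generated by $m$. Maximality of $x$ gives $t_w m=0$ for every $w\not\le x$: such a product lands in degrees $(\ua_x+\ub)|_z$ with $z>x$ or is zero, and $M_z=0$ for $z>x$ kills the former. So $Sm$ is a quotient of $S_x(-\deg t_x)$. By the squarefree bijectivity of multiplication by $t_i$ on $M_{\le x}$, it is in fact isomorphic to $\omega_{S_x}$. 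This gives a nonzero squarefree submodule, so by simplicity $M\cong\omega_{S_x}$.

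For (2), the "if" direction follows by induction on $t$ from Lemma~\ref{squarefree-extension}, since each $\omega_{S_x}$ is squarefree. For "only if", induct on $\sum_{x}\dim_K M_x$, which is finite. Given $M\ne0$, the argument in (1) produces an $\MM$-graded submodule $M_1\cong\omega_{S_x}$. The quotient $M/M_1$ is squarefree because $\Sq A_P$ is abelian and closed under cokernels inside $\*Mod S$ (Proposition~\ref{Sq A_P} with Lemma~\ref{ZZ^n-graded ==> MM-graded}). It also has strictly smaller total dimension, since $\dim(M/M_1)_x=\dim M_x-1$. Applying induction to $M/M_1$ and pulling the resulting chain back to $M$ gives the required filtration.

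The main obstacle is the claim that $Sm\cong\omega_{S_x}$, namely that the submodule generated by an element of top squarefree degree $\ua_x$ with $x$ maximal is exactly a copy of $\omega_{S_x}$. This requires checking that $t_w m=0$ for $w\not\le x$, using Definition~\ref{M-graded}(2): for $w\not\le x$, the joins $[x\vee w]$ consist of elements strictly above $x$, where $M$ vanishes. It also requires checking that the $\MM$-pieces of $Sm$ inside $M_{\le x}$ match those of $\omega_{S_x}$ and are nonzero, which uses the squarefree bijectivity for coordinates $a_i>0$.
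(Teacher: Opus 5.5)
Your proof is correct, but it takes a different route from the paper. The paper argues through the equivalence $\Sq A_P\simeq\mdL$ with finitely generated modules over the incidence algebra $\Lambda$ of $P$. Since $[\omega_{S_x}]_{\ua}=K$ exactly when $\sigma(\ua)=x$, the module $\omega_{S_x}$ corresponds to the one-dimensional simple $\Lambda$-module at $x$, which gives (1). The filtration in (2) is then a composition series of a finite-dimensional $\Lambda$-module, transported back to $\Sq A_P$. The converse in (2) is, as in your proof, repeated use of Lemma~\ref{squarefree-extension}.

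You instead stay inside $\MM$-graded modules. You take $x$ maximal with $M_x\neq 0$ and $0\neq m\in M_{\ua_x}$. Then Definition~\ref{M-graded}(2) forces $t_w m=0$ for $w\not\le x$, and squarefreeness of $M_{\le x}$ makes $Sm\cong\omega_{S_x}$. This single construction gives both directions of (1), and by induction on $\sum_x\dim_K M_x$ it also gives the filtration in (2).

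The paper's argument is shorter, but it leans on the external equivalence and on standard facts about incidence algebras. In particular, the ``only if'' part of (1) is left implicit in the phrase ``corresponds to a simple module.'' Your argument is self-contained and makes that direction explicit. It also shows concretely that the filtration can be built by peeling off copies of $\omega_{S_x}$ at maximal elements of the support, with exactly $\dim_K M_x$ factors isomorphic to $\omega_{S_x}$.

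One small point of justification: $M/M_1\in\Sq A_P$ does not follow merely from $\Sq A_P$ being an abelian full subcategory. The paper's remark on sheaves versus presheaves shows that cokernels in such a subcategory can differ from those in the ambient category. Instead, cite the closure of $\Sq A_P$ under cokernels in $\gModN A_P$, which the paper states in the proof of its derived-category proposition. Alternatively, check it directly: $(-)_{\le z}$ is exact on $\MM$-graded modules, and bijectivity of multiplication by $t_i$ on the graded pieces of $M_{\le z}$ and of $(M_1)_{\le z}$ passes to the quotient.
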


\begin{proof}
Let $\Lambda$ be the incidence algebra of the poset $P$, and $\mdL$ the category of finitely generated left $\Lambda$--modules. By \cite{Y11}, $\Sq A_P$ and $\mdL$ are equivalent. 

(1) For $\ua \in\MM$, we have   
$$[\omega_{S_x}]_\ua =\begin{cases}
K & (\text{if $\sigma(\ua)=x$})\\
0 & (\text{otherwise}.)
\end{cases}.$$ 
Since $\omega_{S_x}$ corresponds to a simple module in $\mdL$, we obtain (1). 

(2) Every object in $\mdL$ admits a composition series, hence $\Sq A_P$ has this property. 
Now the assertion follows from (1). 
Conversely, repeated application of Lemma \ref{squarefree-extension} shows that any module admitting such a filtration is squarefree.
\end{proof}

\begin{lem}\label{Hom(M,E)}
For $M \in \Sq A_P$ and $x \in P$,
$$[\gHom_S(M, \EE_x)]_{\ge \b0}=(M_x)^* \otimes_K S_x,$$
where $(M_x)^*$ is the dual $K$-vector space of $M_x$ (but we set the degree of $(M_x)^*$ to be 0). 
\end{lem}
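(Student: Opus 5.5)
Because $M$ is $\NN^n$-graded, every degree-$\ba$ homomorphism $M\to\EE_x$ with $\ba\ge\b0$ has image in $[\EE_x(\ba)]_{\ge\b0}$. Since $M$ is annihilated by $I_P$, that image lies in $\Nf(\EE_x(\ba))$. So I first reduce to computing $[\gHom_S(M,\EE_x)]_\ba=\Hom_{\*Mod S}(M,\EE_x(\ba))$ for $\ba\in\NN^n$. The plan is to treat both sides of the claimed identity as contravariant functors of $M\in\Sq A_P$ and show that both are exact and agree on the simple objects $\omega_{S_z}$ of Lemma~\ref{simple in sq}.

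First I show exactness of the left side. For any $\ba$, $\Hom_{\*Mod S}(-,\EE_x(\ba))$ is exact because $\EE_x(\ba)$ is injective in $\*Mod S$. Taking the direct sum over $\ba\ge\b0$ keeps exactness, so $M\mapsto[\gHom_S(M,\EE_x)]_{\ge\b0}$ is exact on $\Sq A_P$. The right side $M\mapsto(M_x)^*\otimes_K S_x$ is exact because $M\mapsto M_x\cong M_\ua$ (for any $\ua$ with $\sigma(\ua)=x$) is an exact functor to vector spaces.

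Second, I check the simples. For $M=\omega_{S_z}$ I need $[\gHom_S(\omega_{S_z},\EE_x)]_{\ge\b0}\cong S_x$ if $z=x$ and $0$ otherwise. By the essential-extension property, any nonzero map $\omega_{S_z}\to\EE_x(\ba)$ has image meeting $S_x(\ba)$ nontrivially. Annihilators then force $\fp_z\subset\fp_x$ up to the torsion-free structure, which gives $z\ge x$. If $z>x$, some $t_i$ with $y_i\le z$ and $y_i\not\le x$ acts injectively on $\omega_{S_z}$ but kills $1_x$ and, in suitable degrees, nilpotently kills everything in $[\EE_x(\ba)]$. Comparing degrees via the description of $\EE_x$ should make such maps vanish in degrees $\ge\b0$. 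If $z=x$, maps $\omega_{S_x}\to\EE_x(\ba)$ are multiplication by Laurent monomials $t^{\bb}$ in $K[t_1^{\pm1},\ldots,t_r^{\pm1}]$ with $\bb$ supported on $\{1,\ldots,r\}$. The condition $\ba\ge\b0$ corresponds to $\bb\ge \deg t_x-\deg t_x=\b0$ after the shift, so these maps are parametrized by $S_x$, matching $(M_x)^*\otimes S_x=S_x$ since $\dim(\omega_{S_x})_x=1$.

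Third, I make the identification natural and run the dévissage. For general $M$, I define a natural map $(M_x)^*\otimes_K S_x\to[\gHom_S(M,\EE_x)]_{\ge\b0}$. A functional $\xi\in(M_x)^*$ should go to the composite $M\to M_{\le x}\to S_x\hookrightarrow\EE_x$, where $M_{\le x}\to S_x$ is determined by $\xi$ on the squarefree $S_x$-module $M_{\le x}$. I expect this to be the main obstacle: a squarefree $S_x$-module has a linear functional on its top component $[M_{\le x}]_{\11}$ that extends uniquely to a degree-preserving map to $\omega_{S_x}$-like targets, and I must check that it lands in $S_x$ rather than only after a shift. If this is awkward, I would instead proceed by induction on the length of the filtration in Lemma~\ref{simple in sq}(2), using the exactness from the first step and the five lemma. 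In that version I only compare dimensions in each degree $\ba\ge\b0$: both sides are exact and agree on simples, so they have equal $K$-dimension in each degree. That yields an isomorphism of graded vector spaces, and the $S_x$-module structure follows once the natural map is shown injective.
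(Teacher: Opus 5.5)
You prove only an equality of Hilbert functions; the lemma, and its use in Lemma~\ref{Ext is sq}, needs an isomorphism of $\ZZ^n$-graded $S$-modules, and the step that would supply that is left open.

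What you do establish is correct. Both functors are exact on $\Sq A_P$, and the check on simples is right, though two points need firming up. For $z>x$ the real reason is a degree count, not nilpotency: every nonzero element of $\omega_{S_z}$ has positive $i$-th coordinate for some $y_i\le z$ with $y_i\not\le x$, while $[\EE_x]_{\bc}=0$ whenever $c_i>0$ for such an $i$. For $z=x$ you are implicitly using $(0:_{\EE_x}\fp_x)=K[t_1^{\pm1},\ldots,t_r^{\pm1}]\otimes 1$. With this, your d\'evissage gives $\dim_K[\gHom_S(M,\EE_x)]_\ba=\dim_K M_x\cdot\dim_K[S_x]_\ba$ for all $\ba\in\NN^n$.

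That is only an equality of Hilbert functions. In Lemma~\ref{Ext is sq} one needs $[\gHom_S(M,\EE_x)]_{\ge\b0}$ to be squarefree, i.e.\ free over $S_x$. Equal Hilbert functions do not give this: over $K[t]$, the module $K\oplus tK[t]$ has the Hilbert function of $K[t]$. Constructing your natural map and proving it injective is exactly the part you leave open. As written, you have an isomorphism of graded vector spaces and nothing more.

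The gap closes with tools you already have. Here are three ways.

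\textbf{Splitting inside your d\'evissage.} Let $F:=[\gHom_S(-,\EE_x)]_{\ge\b0}$. It takes values in $\gModN A_P$, with $F(\omega_{S_x})\cong S_x$ as modules and $F(\omega_{S_z})=0$ for $z\ne x$. Each step $0\to M_{i-1}\to M_i\to\omega_{S_{z_i}}\to 0$ of the filtration in Lemma~\ref{simple in sq}(2) gives an exact sequence $0\to F(\omega_{S_{z_i}})\to F(M_i)\to F(M_{i-1})\to 0$. This splits because $S_x$ is injective in $\gModN A_P$ (Lemma~\ref{graded enough injective}). By induction $F(M)\cong S_x^{m}$ with $m=\#\{i\mid z_i=x\}=\dim_K M_x$, which is enough for Lemma~\ref{Ext is sq}.

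\textbf{Your natural map.} It works, and the shift you feared does not occur. For $u\in[M_{\le x}]_\bb$, write $t_xu=t^{\bb}u'$ with $u'$ in the component of degree $\deg t_x$, which is $M_x$; this uses squarefreeness. Set $g_\xi(u):=\xi(u')\,t^{\bb}$, which lies in $S_x$. Then $g_\xi$ is $S_x$-linear and restricts to $\xi$ on $M_x$. Since $S_x$ is a domain, $\xi\otimes s\mapsto s\,g_\xi$ is injective, and your dimension count then gives bijectivity.

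\textbf{The paper's direct route.} The paper transplants the direct computation of \cite[Lemma~3.20]{Y01a}, with no d\'evissage.
\begin{enumerate}
\item For $\ba\in\NN^n$, any map $M\to\EE_x(\ba)$ lands in $\Nf(\EE_x(\ba))$.
\item That module is $t^{\ba}S_x$, regraded and isomorphic to $S_x$, if $\ba$ is supported on $\{i\mid y_i\le x\}$, and $0$ otherwise. Hence $[\gHom_S(M,\EE_x)]_\ba=t^\ba\cdot[\gHom_S(M,S_x)]_\b0$.
\item A degree-$\b0$ map $M\to S_x$ is $\MM$-graded by Lemma~\ref{ZZ^n-graded ==> MM-graded}. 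So it kills $M_{\nleq x}$ and is determined by its restriction to $M_x$.
\end{enumerate}
This yields the natural identification in the statement at once.
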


\begin{proof}
The proof is the same as that of \cite[Lemma~3.20]{Y01a} with the obvious changes (replacing $M_F$ by $M_x$, and $\supp_+(-)$ by $\sigma(-)$ etc.).
\end{proof}

\begin{lem}\label{Ext is sq}
If $M \in \Sq A_P$, then $\gExt_S^i(M,\omega_S) \in \Sq A_P$. 
\end{lem}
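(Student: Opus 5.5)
We argue by induction on the length of a composition series, using Lemma~\ref{simple in sq}(2). Every $M \in \Sq A_P$ has a filtration $0=M_0\subset M_1\subset\cdots\subset M_t=M$ with $M_i/M_{i-1}\cong\omega_{S_x}$ for various $x\in P$. So there are two things to do: treat the simple objects $\omega_{S_x}$ directly, and then pass the squarefree property through the long exact $\gExt$ sequences.

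\emph{Base case.} For $M=\omega_{S_x}$, set $r:=\rank x$. Then $S_x=S/\fp_x$ is a polynomial ring of dimension $r$, hence Cohen--Macaulay, so $\gExt_S^i(\omega_{S_x},\omega_S)=0$ for $i\ne p-r$. Graded local duality gives $\gExt_S^{p-r}(\omega_{S_x},\omega_S)\cong S_x$, where we check the degree bookkeeping using $\omega_S=S(-\bw)$ and $\omega_{S_x}=S_x(-\deg t_x)$. For instance, $\fp_x$ is generated modulo $I_P$ by a regular sequence after localizing, but it is cleaner to use the general fact $\gExt_S^{\dim S-\dim R}(\omega_R,\omega_S)\cong R$ for a graded Cohen--Macaulay quotient $R$ of $S$. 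Since $S_x$ is squarefree, the base case holds.

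\emph{Inductive step.} Take a short exact sequence $0\to M'\to M\to M''\to 0$ in $\Sq A_P$ with $M''\cong\omega_{S_x}$ and $M'$ of shorter length. This yields an exact sequence
\[
\gExt^i_S(M'',\omega_S)\xrightarrow{\ \alpha\ }\gExt^i_S(M,\omega_S)\xrightarrow{\ \beta\ }\gExt^i_S(M',\omega_S)\xrightarrow{\ \delta\ }\gExt^{i+1}_S(M'',\omega_S).
\]
By induction the outer terms are squarefree. By Lemma~\ref{ZZ^n-graded ==> MM-graded}, $\delta$ is then a morphism in $\Sq A_P$, and since $\Sq A_P$ is abelian (Proposition~\ref{Sq A_P}), $\Ker\delta$ is squarefree. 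Likewise $\Coker$ of the previous connecting map is squarefree, because it is the cokernel of a map between squarefree modules. We must also confirm that the kernel and cokernel computed in $\*Mod S$ agree with those computed in $\Sq A_P$. For kernels we would argue through the presentation \eqref{sq-presentation}. For cokernels we would note that the image of a map between squarefree modules is $\MM$-graded and satisfies the bijectivity condition on each $M_{\le x}$, since kernels and cokernels of squarefree $S_x$-modules are squarefree in the classical sense; this is in effect the abelianness statement of \cite{Y11}. We then obtain $0\to \Coker\to\gExt^i_S(M,\omega_S)\to\Ker\delta\to0$, and the middle term is an $A_P$-module because it is annihilated by $I_P$, as $M$ is. Lemma~\ref{squarefree-extension} now shows that $\gExt^i_S(M,\omega_S)$ is squarefree.

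\emph{Main obstacle.} The step I expect to be delicate is the base case computation, in particular that the $\ZZ^n$-grading of $\gExt^{p-r}_S(\omega_{S_x},\omega_S)$ comes out exactly as $S_x$ with no shift, and that it is indeed concentrated in degrees $\ge\b0$. I would try to verify this via the graded dualizing complex, comparing $\omega_S=S(-\bw)$ with $\omega_{S_x}=S_x(-\deg t_x)$. If that proves awkward, a second check is to compare Hilbert series using $\gHom$ into the injectives $\EE_z$ and Lemma~\ref{Hom(M,E)}.
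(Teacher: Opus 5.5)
Your argument is correct, but it is not the route the paper writes out. It is precisely the ``shorter proof'' via Lemmas~\ref{squarefree-extension} and \ref{simple in sq} that the authors mention at the start of their proof and deliberately set aside.

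You argue by d\'evissage along a composition series. This needs only that $\Sq A_P$ is closed under kernels, cokernels and extensions inside $\NN^n$-graded $A_P$-modules. For kernels and cokernels, the cleanest justification is this: every $\ZZ^n$-graded map between squarefree modules is automatically $\MM$-graded, and $N\mapsto N_{\le x}$ is exact on $\MM$-graded modules, so you reduce to the classical statement over $S_x$.

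The step you flag as delicate is actually immediate. Since $\omega_{S_x}=S_x(-\deg t_x)$, $\gExt^{p-r}_S(S_x,\omega_S)\cong\omega_{S_x}$, and all other $\gExt^i_S(S_x,\omega_S)$ vanish, you get $\gExt^i_S(\omega_{S_x},\omega_S)\cong\gExt^i_S(S_x,\omega_S)(\deg t_x)$. This is $S_x$ with no shift for $i=p-r$ and $0$ otherwise. The aside about regular sequences is not needed.

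The paper proceeds in two steps instead:
\begin{enumerate}
\item It shows that $\gExt^i_S(M,\omega_S)$ is $\NN^n$-graded, by passing to $T=K[t_1,\ldots,t_n]$ and using the squarefree theory of \cite{Y00} together with local duality.
\item It uses Lemma~\ref{Hom(M,E)} and $\ba_{\fp_x}=\b0$ from \cite{SY} to identify $[\gHom^\bullet_S(M,\IB)]_{\ge\b0}$ with an explicit complex of squarefree modules, with terms $\bigoplus_{\rank x=p-i}(M_x)^*\otimes_K S_x$, computing $\gExt^\bullet_S(M,\omega_S)$.
\end{enumerate}

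Your route is shorter and avoids those external inputs, but it yields only the conclusion. The paper's route also produces this concrete complex, the kind of description underlying the comparison with Verdier duality in \S 7. That is why the authors prefer it.
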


In the rest of this paper, let $p:=\dim S= \#P^*$. 

\begin{proof}
Although Lemmas~\ref{squarefree-extension} and  \ref{simple in sq}  provide a shorter proof, we use Lemma~\ref{Hom(M,E)} here, as it gives more insight into the results of \S 7.

Let $T =K[t_1, \ldots, t_n]$ be the subring of $S$, and $\omega_T:=T(-\11)$ with $\11=(1, \ldots, 1) \in \ZZ^n$ the canonical module of $T$. It is easy to see that $M$ is a squarefree $T$-module. Hence $\gExt_T^i(M,\omega_T)$ is a squarefree $T$-module by \cite[Theorem~2.6]{Y00}, in particular, it is $\NN^n$-graded. Since 
$$\gExt_S^{p-i}(M, \omega_S) \cong \gExt_T^{n-i}(M,\omega_T)$$ by the $\ZZ^n$-graded local duality theorem, it follows that $\gExt_S^{i}(M, \omega_S)$ is $\NN^n$-graded.    

Let $\fp$ be a $\ZZ^n$-graded prime ideal of $S$.
Regarding $M$ as an $S$-module, we have $\operatorname{Ann}(M) \supset I_P$. Hence, $\fp \not \supset I_P$ implies $\gHom_S(M, \EE_S(S/\fp))=0$. If $\fp  \supset I_P$, equivalently, $\fp=\fp_x$ (and $\EE_S(S/\fp)= \EE_x$) for some $x \in P$,  $[\gHom_S(M, \EE_x)]_{\ge \b0}$ is squarefree by Lemma~\ref{Hom(M,E)}. 

Let $\DB_S$ be the $\ZZ^n$-graded residual complex of $S$. 
Then $\IB:={}^*\! D_S^{\bullet -p}$ is a minimal graded injective  resolution of $\omega_S$.  We have 
$${}^*\! I^{i}_S= \bigoplus_{\substack{\fp:\text{$\ZZ^n$-graded prime} \\ \dim S/\fp =p-i}} (\EE_S(S/\fp))(-\ba_\fp)$$
for some $\ba_\fp \in \ZZ^n$, and $\ba_{\fp_x}=\b0$ holds for all $x \in P$ by \cite[Proposition~4.3]{SY}. 
Hence 
$$[\gHom_S^i(M, \IB)]_{\ge \b0}= \bigoplus_{\substack{ x \in P \\ \rank x =p-i}} (M_x)^* \otimes_K S_x$$
by Lemma~\ref{Hom(M,E)}, and $[\gHom_S^\bullet(M, \IB)]_{\ge \b0}$ is a complex of squarefree modules. Since $\gExt_S^i(M, \omega_S) =H^i(\gHom_S^\bullet(M, \IB))
\cong [H^i(\gHom_S^\bullet(M, \IB))]_{\ge \b0}$, the assertion follows. 
%
\end{proof}

\section{Squarefree-Straight correspondence}
\begin{dfn}
Let $M$ be a $\ZZ^n$--graded $S$--module. If it admits a $\ZZ^n$-graded presentation
\begin{equation}\label{straight presentation}
p_M : 0\too M \xrightarrow{d_M} E_M \xrightarrow{d'_M} E_M',
\end{equation}
where $E_M$ and $E'_M$ are finite direct sums of $\EE_x$'s for various $x \in P$ and $d'_M$ is clean, 
then we say the pair $(M,p_M)$ is a {\it straight module} (over $P$). 
\end{dfn}

Let $(M, p_M)$ be a straight module with the presentation $p_M$ as in \eqref{straight presentation}. We have 
$E_M = \bigoplus_{x \in P} \EE_x^{l_x}$ and $E_M' =\bigoplus_{x \in P} \EE_x^{m_x}$ for some $l_x, m_x \in \NN$.
Then $\Nf(E_M) \cong \bigoplus_{x \in P} S_x^{l_x}$ and  $\Nf(E_M') \cong \bigoplus_{x \in P} S_x^{m_x}$ hold, and $p_M$ yields the exact sequence 
$$ 0\too \Nf(M) \xrightarrow{\Nf(d_M)} \Nf(E_M) \xrightarrow{\Nf(d'_M)} \Nf(E_M'),$$
and hence $\Nf(M)$ is a squarefree module.

\begin{dfn}\label{str-map}
Let $(M,p_M)$ and $(N,p_N)$ be straight modules over $P$. 
A {\it clean map} $(M, p_M) \to (N,p_N)$ is an equivalence  class of commutative diagrams 
\begin{equation}\label{hom in Str}
  \begin{CD}
    p_M: 0 @>>>  M  @>{d_M}>>  E_M  @>_{d_M'}>>  E_M'  \\
    @.     @VV{\varphi}V  @VV{\varphi^0}V  @V{\varphi^1}VV   @. \\
    p_N: 0 @>>>  N @>{d_N}>>  E_N @>{d_N'}>>  E_N' ,
  \end{CD}
\end{equation}
where $\varphi^0$ and $\varphi^1$ are clean maps. 
Two such diagrams are equivalent if their components $M \to N$ coincide. 
So we simply say that $\varphi\in  [\gHom_S (M, N)]_\b0$ is clean. 
\end{dfn}

\begin{rem}\label{remark on clenness}
Let $x,z\in P$ with $x>z$, and $\varphi\in  [\gHom_S (\EE_x, \EE_z)]_\b0$  a non-clean map with $\varphi(1_x)=1_z$. By \cite[Theorem~3.13]{SY}, there is a $\ZZ^n$--graded automorphism $\sigma$ of $\EE_x$ such that $\varphi\circ \sigma$ is clean. Thus, if we take $0 \to \EE_x \stackrel{\sigma^{-1}}{\too} \EE_x \to 0$ and $0 \to \EE_z\stackrel{\mathrm{id}}{\too} \EE_z \to 0$ as the presentations of $\EE_x$ and $\EE_z$, then $\varphi$ induces a clean map. In fact, we have the commutative diagram
$$
  \begin{CD}
  0 @>>>  \EE_x  @>{\sigma^{-1}}>>  \EE_x  @>>> 0  \\
    @.     @VV{\varphi}V  @VV{\varphi \sigma}V  @VVV   @. \\
   0 @>>>  \EE_z @>{\mathrm{id}}>>  \EE_z @>>>  0.  
  \end{CD}
$$
Thus, a naive extension of the definitions from the previous paper \cite{SY} may lead to a slight mismatch. 
So, when we regard $\EE_x$ as a straight module, we use $0 \too \EE_x \stackrel{\operatorname{id}}{\too} \EE_x \too 0$
as its presentation. 
\end{rem}

Let $\Str_P S$ be the category consisting of straight modules and clean morphisms between them. 

For a straight module $(M, p_M)$,  working directly with the module $M$ itself is practically sufficient or intuitively natural (see Remark~\ref{non-full subcategory} below). 
For this reason, we often abbreviate the pair $(M, p_M)$ simply as $M$. Even in such cases, the reader should keep in mind that the specific presentation $p_M: 0 \too M \xrightarrow{d_M}   E_M  \xrightarrow{d'_M}   E'_M$ is implicitly fixed. 
Unless otherwise specified, as $p_M$, we use the minimal one. 
That is, $E_M$ is a $^*$injective envelope of $M$, and $E'_M$ is a $^*$injective envelope of $E_M/M$.

\begin{lem}\label{ess extension}
A straight module $M$ is a *essential extension (i.e., essential extension in the $\ZZ^n$-graded context) of  $\Nf(M)$. 
\end{lem}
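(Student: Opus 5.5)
We reduce the statement to the fact, recalled in the preliminaries, that $\EE_x$ is a *essential extension of $S_x$, and then use the presentation $p_M$ to transport essentiality from $E_M$ down to $M$. Fix the presentation $p_M: 0 \to M \xrightarrow{d_M} E_M \xrightarrow{d'_M} E'_M$ with $E_M=\bigoplus_{x\in P}\EE_x^{l_x}$. By Lemma~\ref{extension to E}(1) we have $\Nf(E_M)=\bigoplus_x S_x^{l_x}$. Since $\Nf$ is left exact, $\Nf(M)=\Nf(E_M)\cap d_M(M)$ when $M$ is identified with its image. Indeed, $\Nf(M)=(0:_M I_P)_{\ge \b0}$, and an element of $M$ lies in this set iff its image lies in $(0:_{E_M} I_P)_{\ge\b0}=\Nf(E_M)$.

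The first step is to show that $E_M$ is a *essential extension of $\Nf(E_M)$. For a single summand this is the known fact that $\EE_x$ is a *essential extension of $S_x=\Nf(\EE_x)$. For a finite direct sum we use the standard argument: a finite direct sum of essential extensions is an essential extension of the direct sum. Concretely, let $0\ne u\in E_M$ be homogeneous. We multiply by suitable homogeneous elements of $S$, one summand at a time, to land in the socle-type part. Since each $\EE_x$ is essential over $S_x$, every nonzero homogeneous submodule meets $\bigoplus S_x^{l_x}$; applying this inductively to the components, we obtain $f\in S$ homogeneous with $0\ne fu\in \Nf(E_M)$. The cleanest way to phrase this is the usual lemma that essentiality is preserved under finite direct sums, which works verbatim in the $\ZZ^n$-graded category.

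The second step: let $0\ne N\subset M$ be a $\ZZ^n$-graded submodule. Then $d_M(N)$ is a nonzero graded submodule of $E_M$, so $d_M(N)\cap \Nf(E_M)\ne0$. Any element of this intersection lies in $d_M(M)\cap\Nf(E_M)=d_M(\Nf(M))$, so $N\cap\Nf(M)\ne0$. This proves the lemma.

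The only potential obstacle is the direct-sum essentiality in the graded setting. I expect it to be routine. If the inductive argument is awkward, I would instead argue via injective hulls. The module $\bigoplus\EE_x^{l_x}$ is the *injective envelope of $\bigoplus S_x^{l_x}$, because *injective envelopes commute with finite direct sums. Any *injective envelope is *essential, so this gives the claim directly. Note that cleanness of $d'_M$ plays no role here; only the shape of $E_M$ matters.
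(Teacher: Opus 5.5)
Your proof is correct and follows essentially the same route as the paper. You embed $M$ in $E_M$ via $d_M$, use that $E_M=\bigoplus_x \EE_x^{l_x}$ is ${}^*$essential over $\Nf(E_M)=\bigoplus_x S_x^{l_x}$, and observe that $d_M(M)\cap \Nf(E_M)=d_M(\Nf(M))$; your extra care with the finite-direct-sum step, which the paper takes for granted, is sound, and either of your two justifications works.
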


\begin{proof}
With the notation of \eqref{straight presentation}, $M$ can be seen as a submodule of $E_M=\bigoplus_{x\in P} \EE_x^{l_x}$.  
Since  $\EE_x$ is a  *essential extension of $\Nf(\EE_x)=S_x$, 
$E_M$ is  a  *essential extension of $\Nf(E_M)$.  
We will show that $M'\cap \Nf(M)\neq 0$ for any graded submodule $M'\neq 0$ of $M$. Regarding $M'\subset M\subset E_M$, we have $M'\cap \Nf(E_M)\neq 0$. Since $M'\cap \Nf(E_M)\subset \Nf(M)$, the assertion follows.
\end{proof}

\begin{lem}\label{existbase}
Let $M$ be a straight module with the presentation \eqref{straight presentation}. 
For $\ba \in \NN^n$ and $\alpha \in M_\ba$, we have $\alpha_1, \alpha_2 \in M_\ba$ with $\alpha=\alpha_1+\alpha_2$, $d_M(\alpha_1) \in \Nf(E_M)$ (equivalently, $\alpha_1 \in \Nf(M)$), 
and $d_M(\alpha_2) \in \Lf(E_M)$. 
\end{lem}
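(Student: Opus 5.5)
We decompose the image of $\alpha$ in $E_M$ and then check that each piece lies back in $M$. By Lemma~\ref{extension to E}(1), $[E_M]_{\ge \b0}=\Nf(E_M)\oplus \Lf(E_M)$ as $\ZZ^n$-graded $K$-vector spaces. So we can write $d_M(\alpha)=\beta_1+\beta_2$ with $\beta_1 \in [\Nf(E_M)]_\ba$ and $\beta_2\in [\Lf(E_M)]_\ba$. The claim is equivalent to $\beta_1 \in d_M(M)$: once this holds, $\beta_2=d_M(\alpha)-\beta_1$ is also in $d_M(M)$. We then set $\alpha_i:=d_M^{-1}(\beta_i)$, using that $d_M$ is injective. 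The parenthetical equivalence $d_M(\alpha_1)\in \Nf(E_M) \iff \alpha_1\in\Nf(M)$ holds because $\Nf$ is left exact, so $\Nf(M)=d_M^{-1}(\Nf(E_M))$.

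Since $M=\Ker d'_M$, it suffices to show $d'_M(\beta_1)=0$. We have $d'_M(\beta_1)+d'_M(\beta_2)=d'_M(d_M(\alpha))=0$, and we examine where each term lies.

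For the second term, $d'_M$ is clean, so $d'_M(\beta_2)\in \Lf(E'_M)$. For the first term, $d'_M$ is $\ZZ^n$-graded and commutes with the left exact functor $\Nf$. Hence $d'_M(\Nf(E_M))\subset \Nf(E'_M)$, because $\Nf(E'_M)$ is exactly the part of $E'_M$ of degree $\ge\b0$ that is annihilated by $I_P$, and $d'_M$ preserves both properties. Thus $d'_M(\beta_1)\in \Nf(E'_M)$.

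The relation $d'_M(\beta_1)=-d'_M(\beta_2)$ now places this element in $\Nf(E'_M)\cap\Lf(E'_M)$. That intersection is $0$ by the direct sum decomposition in Lemma~\ref{extension to E}(1) applied to $E'_M$. Therefore $d'_M(\beta_1)=0$, so $\beta_1\in M$, as required.

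The only delicate point is making sure the decomposition $[E]_{\ge\b0}=\Nf(E)\oplus\Lf(E)$ applies degreewise to the fixed bases of both $E_M$ and $E'_M$, and that cleanness is measured against those same bases. Both facts are built into the definitions, so I expect no real obstacle.
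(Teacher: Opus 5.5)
Your proof is correct and follows the paper's argument essentially step for step. You split $d_M(\alpha)$ along $[E_M]_{\ge\b0}=\Nf(E_M)\oplus\Lf(E_M)$, use cleanness of $d'_M$ and the fact that $d'_M(\Nf(E_M))\subset\Nf(E'_M)$ to force $d'_M(\beta_1)=d'_M(\beta_2)=0$, and then pull $\beta_1$ back through $d_M$ by exactness, exactly as the paper does.
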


\begin{proof}
We can take $\beta_1\in [\Nf(E_M)]_\ba$ and $\beta_2 \in [\Lf(E_M)]_\ba$ so that $d_M(\alpha)=\beta_1+\beta_2$. 
Since $d'_M(\beta_1) \in \Nf(E'_M)$, $d'_M(\beta_2) \in \Lf(E'_M)$ (for the latter, we use the cleanness of $d_M'$), and $$d'_M(\beta_1)+d'_M(\beta_2)= d'_M(\beta_1+\beta_2)=d'_M(d_M(\alpha))=0,$$ we have $d'_M(\beta_1)=d'_M(\beta_2)=0$. 
By the exactness of \eqref{straight presentation}, there is $\alpha_1 \in M_\ba$ with $d_M(\alpha_1)=\beta_1$. 
For $\alpha_2:=\alpha-\alpha_1 \in M_\ba$, we have  
$d_M(\alpha_2)=\beta_2 \in \Lf(E_M).$  
Hence $\alpha=\alpha_1 +\alpha_2$ is a desired decomposition.  
\end{proof}

The assignment $\Str_P S \ni (M, p_M) \longmapsto \Nf(M) \in \Sq A_P$ defines a functor $\Str_P S \to \Sq A_P$, which we shall also denote by $\Nf$. 

\begin{thm}\label{sq-str}
The functor $\Nf
:\Str_P S \to \Sq A_P$ gives the equivalence $\Str_P S \simeq \Sq A_P$. Hence $\Str_P S$ is an abelian category. 
\end{thm}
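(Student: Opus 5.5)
We show that $\Nf:\Str_P S\to \Sq A_P$ is essentially surjective, full and faithful. Abelianness of $\Str_P S$ then follows from Proposition~\ref{Sq A_P}.

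\emph{Essential surjectivity.} Let $N\in \Sq A_P$. By Proposition~\ref{Sq A_P} there is an injective presentation $0\to N\to \bigoplus_x S_x^{l_x}\xrightarrow{f}\bigoplus_x S_x^{m_x}$ in $\Sq A_P$. Set $E=\bigoplus \EE_x^{l_x}$ and $E'=\bigoplus \EE_x^{m_x}$; by Lemma~\ref{extension to E}(1) we have $\Nf(E)=\bigoplus S_x^{l_x}$ and $\Nf(E')=\bigoplus S_x^{m_x}$. Lemma~\ref{extension to E}(2) extends $f$ uniquely to a clean map $\widetilde f:E\to E'$. Put $M:=\Ker \widetilde f$ with the presentation $0\to M\to E\xrightarrow{\widetilde f} E'$; this is a straight module. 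Since $\Nf$ is left exact, $\Nf(M)=\Ker \Nf(\widetilde f)=\Ker f\cong N$.

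\emph{Fullness.} Let $(M,p_M)$ and $(N,p_N)$ be straight and $g:\Nf(M)\to\Nf(N)$ a morphism in $\Sq A_P$. Applying $\Nf$ to $p_M$ and $p_N$ gives injective presentations in $\Sq A_P$, since the $S_x$ are injective there. By injectivity, $g$ lifts to maps $g^0:\Nf(E_M)\to\Nf(E_N)$ and $g^1:\Nf(E'_M)\to\Nf(E'_N)$ forming a commutative diagram. Lemma~\ref{extension to E}(2) extends these to clean maps $\widetilde{g^0}$ and $\widetilde{g^1}$. By Lemma~\ref{extension to E}(3), $\widetilde{g^1}\circ d'_M$ and $d'_N\circ\widetilde{g^0}$ are both the clean extension of the same map $\Nf(E_M)\to \Nf(E'_N)$, so they coincide. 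Hence $\widetilde{g^0}$ restricts to a map $\varphi:M\to N=\Ker d'_N$, which is clean in the sense of Definition~\ref{str-map}. Since $\widetilde{g^0}$ extends $g^0$, we get $\Nf(\varphi)=g$.

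\emph{Faithfulness.} Let $\varphi:M\to N$ be clean with $\Nf(\varphi)=0$, realized by a diagram \eqref{hom in Str}. We must show $\varphi=0$. First, $\varphi^0$ kills $d_M(\Nf(M))$. Lemma~\ref{ess extension} alone does not suffice here, because a map vanishing on an essential submodule need not vanish. Instead I would argue through the $E$'s. The restriction $\varphi^0|_{\Nf(E_M)}$ composed with $d'_N$ vanishes on $\Nf(M)$, but $\varphi^0|_{\Nf(E_M)}$ itself need not vanish, so the argument has to be more careful. Since $\varphi^0$ is clean, it is the unique clean extension of $h:=\Nf(\varphi^0)$, and likewise $\varphi^1$ extends $\Nf(\varphi^1)$. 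The pair $(h,\Nf(\varphi^1))$ is a chain map between injective presentations in $\Sq A_P$ lifting $0$. Hence $h=s\circ\Nf(d'_M)$ up to a map into $\Nf(N)$; more precisely, $h=\Nf(d_N)\circ 0+s\circ \Nf(d'_M)$ for a homotopy $s:\Nf(E'_M)\to\Nf(E_N)$. The homotopy exists only on the image of $\Nf(d'_M)$, and extending it to all of $\Nf(E'_M)$ uses injectivity of $\Nf(E_N)$. Extending $s$ cleanly to $\widetilde s$ and using Lemma~\ref{extension to E}(3) gives $\varphi^0=\widetilde s\circ d'_M$. Since $d'_M\circ d_M=0$, this vanishes on $M$, so $\varphi=0$.

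\emph{Main obstacle.} The hard part is the homotopy step: $\Nf(M)\to \Nf(E_M)\to\Nf(E'_M)$ is only left exact, so I must check that the standard comparison argument still yields a map $s$ defined on all of $\Nf(E'_M)$ with $h=s\circ\Nf(d'_M)$. Concretely, $h$ vanishes on $\Nf(M)=\Ker\Nf(d'_M)$, so $h$ factors through $\Im \Nf(d'_M)\subset \Nf(E'_M)$. The factor map then extends to $\Nf(E'_M)$ by injectivity of $\Nf(E_N)$ in $\Sq A_P$, using that $\Sq A_P$ is abelian, closed under images, and $\Nf(E_N)$ is injective there.
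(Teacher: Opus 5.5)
Your proof is correct. Essential surjectivity and fullness are the paper's arguments; the genuine difference is in faithfulness.

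The paper argues elementwise. Assuming $\varphi\neq 0$, it uses Lemma~\ref{ess extension} to find a homogeneous $\alpha\in M$ with $0\neq\varphi^0(d_M(\alpha))\in\Nf(E_N)$. It then splits $\alpha=\alpha_1+\alpha_2$ by Lemma~\ref{existbase}, which is where the cleanness of $d'_M$ enters. Finally it observes that $\varphi^0(d_M(\alpha_1))=d_N(\varphi(\alpha_1))=0$ while $\varphi^0(d_M(\alpha_2))\in\Lf(E_N)$, so $\varphi^0(d_M(\alpha))\in\Nf(E_N)\cap\Lf(E_N)=0$, a contradiction.

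You instead factor $\Nf(\varphi^0)$ through $\Nf(d'_M)$ using injectivity of $\Nf(E_N)$ in $\Sq A_P$. You then upgrade this to the identity $\varphi^0=\widetilde{s}\circ d'_M$ via the uniqueness in Lemma~\ref{extension to E}(2) together with closure of clean maps under composition. In your version, cleanness of $d'_M$ enters only through the fact that $d'_M$ is the unique clean extension of $\Nf(d'_M)$.

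What each route buys: yours is purely formal once Lemma~\ref{extension to E} is available. It needs neither Lemma~\ref{ess extension} nor Lemma~\ref{existbase}, and it proves slightly more, namely that a clean chain map lying over $0$ is null-homotopic via a clean homotopy $E'_M\to E_N$. The paper's route uses the $\Nf\oplus\Lf$ decomposition of Lemma~\ref{existbase}, the same device it reuses immediately afterwards in Lemma~\ref{N-exact-Ker} and Theorem~\ref{Ker is straight}.

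Two small points deserve an explicit sentence in a write-up, though you essentially note both. First, $\Ker\Nf(d'_M)=\Nf(d_M)(\Nf(M))$ by left exactness of $\Nf$ and injectivity of $d_M$. Second, $\Im\Nf(d'_M)$, computed as an ordinary $S$-module, is squarefree and is a subobject of $\Nf(E'_M)$ in $\Sq A_P$, so the extension by injectivity really takes place inside $\Sq A_P$.
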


\begin{proof}
First we will check that $\Nf$ is essentially surjective. 
For any $N\in \Sq A_P$, there is a presentation
  $$0 \to N \to \bigoplus_{x\in P}S_x^{l_x} \xrightarrow{\varphi} \bigoplus_{x\in P}S_x^{m_x}.$$
By Lemma~\ref{extension to E}, we have
$$p_K: 0\to \Ker \widetilde{\varphi} \to \bigoplus_{x\in P}\EE_x^{l_x} \xrightarrow{\widetilde{\varphi}} \bigoplus_{x\in P}\EE_x^{m_x},$$
where $\widetilde{\varphi}$ is clean and $\Nf(\widetilde{\varphi})=\varphi$. Then $\Nf(\Ker \widetilde{\varphi})\cong N$. Therefore, $\Nf$ is essentially surjective because $(\Ker \widetilde{\varphi}, p_K)$ is straight.

We will check that $\Nf$ is fully faithful. That is, 
\begin{equation}\label{N for Hom}
\Nf:\Hom_{\Str_P S}(M,N)\to  [{}^* \! \Hom_S (\Nf(M), \Nf(N))]_\b0
\end{equation}
is bijective for each $M, N\in \Str_P S$. 

First, we check the surjectivity of \eqref{N for Hom}. Take $\psi \in  [{}^* \! \Hom_S(\Nf(M), \Nf(N))]_\b0$.
Since $M$ and $N$ are straight modules, there exist presentations
  $$p_M: 0 \to M \xrightarrow{d_M} E_M \xrightarrow{d'_M} E_M'$$
and 
$$p_N: 0 \to N \xrightarrow{d_N} E_N \xrightarrow{d'_N} E_N'.$$
Since $\Nf(E_M), \Nf(E'_M), \Nf(E_N)$ and $\Nf(E'_N)$ are injective in $\Sq A_P$, 
there exists the following commutative diagram
\[
  \begin{CD}
     0 @>>>  \Nf(M)  @>{\Nf(d_M)}>>  \Nf(E_M)  @>{\Nf(d_M')}>>  \Nf(E_M')  \\
    @.     @VV\psi V  @VV\psi^0 V  @VV\psi^1 V   @. \\
     0 @>>>  \Nf(N) @>{\Nf(d_N)}>>  \Nf(E_N) @>{\Nf(d'_N)}>>  \Nf(E_N') .
  \end{CD}
\]
By Lemma~\ref{extension to E} (2) and (3), there exist clean maps $\widetilde{\psi^0}$ and $\widetilde{\psi^1}$  such that the right square of the following diagram commutes 
\[
  \begin{CD}
     0 @>>>  M  @>{d_M}>>  E_M  @>{d'_M}>>  E_M'  \\
    @.    @VV{\widetilde{\psi}}V   @VV{\widetilde{\psi^0}}V  @VV{\widetilde{\psi^1}}V   @. \\
     0 @>>>  N @>{d_N}>>  E_N @>{d'_N}>>  E_N' ,
  \end{CD}
\]
and $\widetilde{\psi^0}$ induces $\widetilde{\psi}: M\to N$, which is a clean map with $\Nf(\widetilde{\psi})=\psi$.

Next, we check the injectivity of \eqref{N for Hom}.
Consider the  commutative diagram \eqref{str-map}. 
For a contradiction, assume that $\varphi \ne 0$ and 
$\Nf(\varphi)=0$. Then
\[
  \begin{CD}
     0 @>>>  \Nf(M)  @>{\Nf(d_M)}>>  \Nf(E_M)  @>>>  \Nf(E_M')  \\
    @.     @VV{\Nf(\varphi)=0}V  @VV{\Nf(\varphi^0)}V  @VVV   @. \\
     0 @>>>  \Nf(N) @>{\Nf(d_N)}>>  \Nf(E_N) @>>>  \Nf(E_N') ,
  \end{CD}
\]
and $\Nf(\varphi^0\circ d_M)=\Nf(\varphi^0)\circ \Nf(d_M)=0$. Since $\varphi \ne 0$ now, we have $\varphi^0\circ d_M=d_N \circ \varphi \neq 0$, and  $0\neq \varphi^0(d_M(\alpha))\in \Nf(E_N)$ for some homogeneous $\alpha\in M$ by Lemma~\ref{ess extension}. 
Note that $\deg \alpha \ge \b0$. 
By Lemma \ref{existbase}, we can take $\alpha_1, \alpha_2 \in M$ such that $\alpha=\alpha_1+\alpha_2$, $d_M(\alpha_1) \in \Nf(E_M)$ and $d_M(\alpha_2) \in \Lf(E_M)$. 
Clearly, $\varphi^0(d_M(\alpha_1)) \in \Nf(E_N)$. 
Since $\varphi^0$ is clean, we have $\varphi^0(d_M(\alpha_2)) \in \Lf(E_N)$.   
On the other hand, since $\Nf(\varphi^0\circ d_M)=0$ now and $\alpha_1 \in \Nf(M)$, we have $\varphi^0(d_M(\alpha_1))=0$. Hence we have 
$$
\Nf(E_N) \ni \varphi^0(d_M(\alpha))=\varphi^0(d_M(\alpha_1+\alpha_2))=\varphi^0(d_M(\alpha_2)) \in \Lf(E_N).
$$
It implies that $\varphi^0(d_M(\alpha))=0$, and 
this is a contradiction. 
\end{proof}


\begin{dfn}[cf. \cite{M00}]
Let us denote a quasi-inverse of $\Nf: \Str_P S\to \Sq A_P $ as $\C: \Sq A_P\to \Str_P S$. We call $\C$ the \v{C}ech hull.
\end{dfn}

\begin{rem}\label{non-full subcategory}
Let $(M,p_M), (N, p_N)$ be straight modules. Then $(M,p_M)$ and $(N,p_N)$ are isomorphic in $\Str_P S$ if and only if $M$ and $N$ are also isomorphic in $\*Mod S$. 
In fact, if $M \cong N$ in $\*Mod S$, then we get $\Nf(M) \cong \Nf(N)$, and it induces an isomorphism between $(M,p_M)$ and $(N,p_N)$ via $\C$. The converse is easier.
Because kernels and cokernels in $\Str_P S$ are isomorphic to those in $\*Mod S$.
The forgetful functor $\Str_P S \to \*Mod S$ is faithful, but need not be full.

This situation is conceptually analogous to the familiar example of the simplex category $\Delta$, whose objects are finite totally ordered sets and whose morphisms are order-preserving maps: the forgetful functor from $\Delta$ to the category of sets  is faithful but not full.
\end{rem}

By Theorem~\ref{sq-str}, $\Str_P S$ is abelian, but it is far from obvious that kernels, images, and cokernels agree with those computed in $\*Mod S$. 
In Theorems~\ref{Ker is straight} and \ref{Coker is straight} below, we will show that $\Ker \varphi$, $\Im\varphi$ and $\Coker\varphi$ are straight, but they are no more than  $\ZZ^n$-graded $S$-modules at this stage. 

\begin{rem}[{cf. \cite[Example 2.3.19]{Ye}}]
Let $X$ be a topological space. Then sheaves of abelian groups on $X$ form a full subcategory of the category of presheaves of abelian groups on $X$. Both categories are abelian. However, the cokernel of a morphism in the former category need not be isomorphic to its cokernel in the latter. By passing to the opposite categories, we obtain an example in which the kernels of a morphism in the two categories need not be isomorphic.
\end{rem}

\begin{lem}\label{N-exact-Ker}
If $\varphi\in \Hom_{\Str_P S}(M,N)$, then 
$\Nf(\Im\varphi)=\Im\Nf(\varphi)$. In particular, 
$$0\to \Nf(\Ker\varphi) \to \Nf(M) \to \Nf(\Im\varphi)\to 0$$
is exact (in the category $\*Mod S$).
\end{lem}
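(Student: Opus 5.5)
We must show $\Nf(\Im\varphi)=\Im\Nf(\varphi)$, where $\Im\varphi$ is the image computed in $\*Mod S$, viewed as a submodule of $N$. Since $\Nf(-)=(0:_{-}I_P)_{\ge\b0}$ is a submodule functor, $\Nf(\Im\varphi)=\Im\varphi\cap\Nf(N)$. Clearly $\varphi(\Nf(M))\subset \Im\varphi$ lies in degrees $\ge\b0$ and is killed by $I_P$, so $\Im\Nf(\varphi)\subset\Nf(\Im\varphi)$. The work is the reverse inclusion: a homogeneous element $\beta=\varphi(\alpha)\in \Nf(N)$ with $\alpha\in M$ must be the image of some element of $\Nf(M)$.

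Take such $\beta$ of degree $\ba$. Then $\ba\ge\b0$, since $\beta\in\Nf(N)$ and we may take $\alpha$ homogeneous of degree $\ba$. By Lemma~\ref{existbase}, write $\alpha=\alpha_1+\alpha_2$ with $d_M(\alpha_1)\in\Nf(E_M)$, so $\alpha_1\in\Nf(M)$, and $d_M(\alpha_2)\in\Lf(E_M)$. Fix a diagram \eqref{hom in Str} representing $\varphi$, with $\varphi^0$ clean. Then
$$d_N(\beta)=\varphi^0(d_M(\alpha_1))+\varphi^0(d_M(\alpha_2)).$$
Here the first term lies in $\Nf(E_N)$, because $\varphi^0$ is $\ZZ^n$-graded and $\Nf$ is functorial. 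The second lies in $\Lf(E_N)$ by cleanness. The left side lies in $\Nf(E_N)$ since $\beta\in\Nf(N)$. Because $[E_N]_{\ge\b0}=\Nf(E_N)\oplus\Lf(E_N)$ by Lemma~\ref{extension to E}(1), we get $\varphi^0(d_M(\alpha_2))=0$. Since $d_N$ is injective and $d_N\varphi(\alpha_2)=\varphi^0 d_M(\alpha_2)=0$, we get $\varphi(\alpha_2)=0$. Hence $\beta=\varphi(\alpha_1)\in\Im\Nf(\varphi)$.

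For the exact sequence, $\Nf$ is left exact on $\*Mod S$. Applying it to $0\to\Ker\varphi\to M\to\Im\varphi\to 0$ gives exactness at the left and middle. Surjectivity of $\Nf(M)\to\Nf(\Im\varphi)$ is exactly the equality just proved, since $\Nf(\varphi)$ factors as $\Nf(M)\to\Nf(\Im\varphi)\hookrightarrow\Nf(N)$.

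The main obstacle is the middle step: ensuring the decomposition is compatible with $\varphi^0$. This relies on Lemma~\ref{existbase}, on cleanness of $\varphi^0$, and on the direct sum decomposition in degrees $\ge\b0$.
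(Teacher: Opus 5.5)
Your proposal is correct and follows essentially the same route as the paper's proof. Both arguments reduce to surjectivity of $\Nf(M)\to\Nf(\Im\varphi)$ via left exactness of $\Nf$. Both then split a homogeneous preimage $\alpha=\alpha_1+\alpha_2$ using Lemma~\ref{existbase}, and use cleanness of $\varphi^0$ together with $[E_N]_{\ge\b0}=\Nf(E_N)\oplus\Lf(E_N)$ to force $\varphi(\alpha_2)=0$.
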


\begin{proof}
We use the same notation as \eqref{hom in Str}. 
Since $\Nf$ is left exact, it suffices to show that $\Nf(\varphi): \Nf(M)\to \Nf(\Im\varphi)$ is surjective.  Take an arbitrary homogeneous element $\varphi(\alpha)\in \Nf(\Im\varphi)$ for $\alpha \in M$. 
We may assume that $\alpha$ is also homogeneous (in what follows, we always choose homogeneous preimages in similar situations). Then $\deg \alpha \geq \mathbf{0}$.
We will check that there exists $\alpha'\in \Nf(M)$ such that $\varphi(\alpha')=\varphi(\alpha)$. 



 By Lemma \ref{existbase}, we can take $\alpha_1, \alpha_2 \in M$ such that $\alpha=\alpha_1+\alpha_2$, $d_M(\alpha_1) \in \Nf(E_M)$ (equivalently, $\alpha_1 \in \Nf(M)$) and $d_M(\alpha_2) \in \Lf(E_M)$. Clearly, 
$d_N(\varphi(\alpha_1))=\varphi^0(d_M(\alpha_1))\in \Nf(E_N).$ 
Since $\varphi^0$ is clean, we have 
$d_N(\varphi(\alpha_2))=\varphi^0(d_M(\alpha_2))\in   \Lf(E_N).$   
Since 
$$\Nf(E_N) \ni d_N(\varphi(\alpha))=d_N(\varphi(\alpha_1))+d_N(\varphi(\alpha_2)),$$
we have $d_N(\varphi(\alpha_2))=0$ and 
$d_N(\varphi(\alpha))=d_N(\varphi(\alpha_1))$.  
Since $d_N$ is injective, we have  
$\varphi(\alpha)=\varphi(\alpha_1)$, and $\alpha_1 \in \Nf(M)$ is the desired element. 
\end{proof}

\begin{lem}\label{ess-Im}
For $0 \ne \varphi\in \Hom_{\Str_P S}(M,N)$, $\Im \varphi$ is a  ${}^*$essential extension of $\Nf(\Im\varphi)$. 
\end{lem}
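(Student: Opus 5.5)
We must show: for every nonzero homogeneous $\ZZ^n$-graded submodule $L \subset \Im\varphi$, we have $L \cap \Nf(\Im\varphi) \ne 0$. The plan is to pull the problem back to $N$, where Lemma~\ref{ess extension} is available, and then use the decomposition trick of Lemma~\ref{N-exact-Ker}.

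First, regard $\Im\varphi \subset N$. By Lemma~\ref{ess extension}, $N$ is a ${}^*$essential extension of $\Nf(N)$. Hence $L \cap \Nf(N) \ne 0$, and we can pick a homogeneous element $0 \ne \beta \in L \cap \Nf(N)$.

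It remains to check that $\beta \in \Nf(\Im\varphi)$. Recall that $\Nf(\Im\varphi) = (0:_{\Im\varphi} I_P)_{\ge \b0}$. Since $\beta \in \Nf(N)$, the element $\beta$ is annihilated by $I_P$ and has degree $\ge \b0$. Since also $\beta \in \Im\varphi$, this directly gives $\beta \in (0:_{\Im\varphi} I_P)_{\ge \b0} = \Nf(\Im\varphi)$. Thus $\Nf(\Im\varphi) = \Nf(N) \cap \Im\varphi$, and $L \cap \Nf(\Im\varphi) \ni \beta \ne 0$.

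The only point needing care is this identification: $\Nf$ is defined via $\gHom_S(A_P,-)$ restricted to degrees $\ge \b0$, so $\Nf$ of a submodule is the intersection of the submodule with $\Nf$ of the ambient module. The identification is immediate from $\Nf(M) \cong (0:_M I_P)_{\ge\b0}$. I do not expect a real obstacle here. If one wants additionally to exhibit a preimage of $\beta$ in $\Nf(M)$, Lemma~\ref{N-exact-Ker} provides it, but that is not needed for essentiality.
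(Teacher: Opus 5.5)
Your argument is correct and is essentially the paper's proof. Both proofs rest on two facts: $N$ is a ${}^*$essential extension of $\Nf(N)$ (Lemma~\ref{ess extension}), and $\Nf(L)=L\cap\Nf(\Im\varphi)=L\cap\Nf(N)\neq 0$; the paper merely writes the last intersection inside $E_N$ as $L\cap\Nf(E_N)$, and, as you note, neither Lemma~\ref{N-exact-Ker} nor the hypothesis $\varphi\neq 0$ is actually needed.
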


\begin{proof}
Through the injection $d_N$, we regard $N$ as a submodule  of $E_N$. Let $L$ be a non-zero graded submodule of $\Im\varphi$. Since $N$ is a *essential extension of $\Nf(N)$, we have $\Nf(L)=L \cap \Nf(\Im\varphi)=L \cap \Nf(E_N) \ne 0$.
\end{proof}

\begin{thm}\label{Ker is straight}
For $\varphi\in \Hom_{\Str_P S}(M,N)$, let $\Ker\varphi$ (resp. $K_\varphi$) be the kernel of $\varphi$ in $\*Mod S$ (resp. $\Str_P S$). Then $\Ker \varphi = K_\varphi$ as submodules of $M$. In particular, $\Ker \varphi$ is straight.
\end{thm}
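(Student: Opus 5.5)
The plan is to compare the kernel $K_\varphi$ computed in $\Str_P S$ with the module-theoretic kernel $\Ker\varphi\subset M$. I will show two inclusions: $\iota(K_\varphi)\subset \Ker\varphi$, where $\iota: K_\varphi\to M$ is the canonical monomorphism in $\Str_P S$, and $\Ker\varphi\subset \iota(K_\varphi)$. Via the equivalence of Theorem~\ref{sq-str}, $K_\varphi=\C(\Ker \Nf(\varphi))$, and $\iota$ is the clean map with $\Nf(\iota)$ the inclusion $\Ker\Nf(\varphi)\hookrightarrow \Nf(M)$.

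\emph{Step 1: $\iota$ is injective as a module map.} Since $\Nf$ is left exact on $\*Mod S$, we have $\Nf(\Ker\iota)=\Ker\Nf(\iota)=0$, so $\Ker\iota\cap \Nf(K_\varphi)=0$. Because $K_\varphi$ is a ${}^*$essential extension of $\Nf(K_\varphi)$ (Lemma~\ref{ess extension}), it follows that $\Ker\iota=0$.

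\emph{Step 2: $\iota(K_\varphi)\subset\Ker\varphi$.} The composite $\varphi\circ\iota$ is clean and satisfies $\Nf(\varphi\circ\iota)=0$. By faithfulness of $\Nf$ (the injectivity part of the proof of Theorem~\ref{sq-str}), $\varphi\circ\iota=0$ as a module map.

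\emph{Step 3: the reverse inclusion.} Here I will build compatible presentations by a clean Horseshoe argument. By Lemma~\ref{N-exact-Ker}, or directly from left exactness, we have an exact sequence $0\to\Nf(K_\varphi)\to\Nf(M)\to \Nf(\Im\varphi)$ in $\Sq A_P$. Write $J:=\Nf(M)/\Nf(K_\varphi)$, the coimage in $\Sq A_P$, and let $I:=\C(J)$, with $\pi:M\to I$ the clean epimorphism in $\Str_P S$.
\begin{itemize}
\item First, $\varphi$ factors as $M\xrightarrow{\pi}I\xrightarrow{j}N$ in $\Str_P S$, with $j$ clean and $\Nf(j)$ injective. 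By the argument of Step 1, $j$ is injective as a module map, hence $\Ker\varphi=\Ker\pi$.
\item Next, take injective presentations of $\Nf(K_\varphi)$ and $J$ in $\Sq A_P$ by sums of $S_x$'s. Applying the Horseshoe Lemma in $\Sq A_P$ gives a presentation of $\Nf(M)$ with terms $\Nf(E_K)\oplus\Nf(E_I)$ and $\Nf(E'_K)\oplus\Nf(E'_I)$. Its second differential is upper triangular: the block $\Nf(E_K)\to\Nf(E'_M)$ lands in $\Nf(E'_K)$ and equals $\Nf(d'_K)$.
\item Lift all these maps uniquely to clean maps via Lemma~\ref{extension to E}(2),(3). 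This yields straight presentations $p_K,p_M,p_I$ with $E_M=E_K\oplus E_I$, $E'_M=E'_K\oplus E'_I$, the same triangular shape, and $d_M$ compatible with $d_K$ and $d_I$ on the left.
\end{itemize}
Now let $\alpha\in\Ker\pi$ be homogeneous. The $E_I$-component of $d_M(\alpha)$ is $d_I(\pi(\alpha))=0$, so $d_M(\alpha)=\beta\in E_K$. Since $0=d'_M(\beta)$ and $d'_M|_{E_K}=d'_K$ lands in $E'_K$, we get $d'_K(\beta)=0$. Thus $\beta=d_K(\gamma)$ for some $\gamma\in K_\varphi$, and hence $\alpha=\iota(\gamma)$.

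The main obstacle is the compatibility in Step 3. The presentation produced by the Horseshoe construction must be identified, through a clean isomorphism, with the given presentation of $M$, and the structure map $M\to E_M$ must agree with $d_K$ on $K_\varphi$ and lift $d_I$. I would handle this by noting that $M\cong\C(\Nf(M))$ is determined up to clean isomorphism by $\Nf(M)$ (Remark~\ref{non-full subcategory}), so any two straight presentations of $M$ may be compared. It also requires verifying that the uniqueness of clean extensions in Lemma~\ref{extension to E} preserves both the triangular block shape and the commutativity of the squares. Once this is in place, the element chase above is formal.
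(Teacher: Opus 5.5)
Your proposal is correct. Your Steps 1--2 match the opening of the paper's proof, but your Step 3 takes a genuinely different route.

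\textbf{How the paper proves the reverse inclusion.} The paper does not factor $\varphi$ through the coimage or build new presentations. It notes that $\Nf(\Ker\varphi)=\Nf(K_\varphi)$. Hence $\Ker\varphi$ is a ${}^*$essential extension of $K_\varphi$, and both sit inside the same ${}^*$injective hull $E_K$. It then compares $E_K/K_\varphi\cong\Im d'_K$ with $E_K/\Ker\varphi$ through a map $\tau$, in three steps:
\begin{itemize}
\item a snake lemma at the $\Nf$-level shows $\Nf(\tau|_{\Im d'_K})$ is injective; this needs Lemma~\ref{N-exact-Ker}, i.e.\ the splitting $[E]_{\ge\b0}=\Nf(E)\oplus\Lf(E)$ together with cleanness;
\item Lemma~\ref{ess-Im} and the minimality of $p_K$ upgrade this to injectivity of $\tau$;
\item a second snake lemma shows $k_\varphi$ is onto $\Ker\varphi$.
\end{itemize}

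\textbf{What your route does instead.} You use only the equivalence of Theorem~\ref{sq-str}, Lemma~\ref{ess extension}, and the uniqueness of clean lifts in Lemma~\ref{extension to E}. The Horseshoe lemma in $\Sq A_P$ gives a block-triangular clean differential $E_K\oplus E_I\to E'_K\oplus E'_I$. For that differential, exactness at the middle term is a two-line element chase. This is more formal and avoids Lemmas~\ref{N-exact-Ker} and~\ref{ess-Im} entirely. The cost is that you must build auxiliary straight modules and transport back along the equivalence. The paper avoids this by working directly with the fixed minimal presentations.

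\textbf{Resolving the obstacle you flag.} Do not try to identify the horseshoe presentation with $p_M$ itself. It need not be minimal, so in general this is impossible. Instead:
\begin{itemize}
\item Set $M_1:=\Ker d'_M$, $K_1:=\Ker d'_K$ and $I_1:=\Ker d'_I$, using the lifted clean maps.
\item The inclusion $E_K\to E_K\oplus E_I$, the projection $E_K\oplus E_I\to E_I$, and their primed versions are clean and commute with the triangular differentials. So they induce clean maps $\iota_1\colon K_1\to M_1$ and $\pi_1\colon M_1\to I_1$, and your chase gives $\Ker\pi_1=\iota_1(K_1)$.
\item Theorem~\ref{sq-str} gives isomorphisms $\theta_M,\theta_K,\theta_I$ in $\Str_P S$ lifting the horseshoe identifications. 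These are bijective module maps, since their clean inverses compose to the identity by faithfulness.
\item The identities $\pi_1\theta_M=\theta_I\pi$ and $\iota_1\theta_K=\theta_M\iota$ hold as module maps, again by faithfulness, because they hold after applying $\Nf$.
\end{itemize}
Hence $\Ker\varphi=\Ker\pi=\iota(K_\varphi)$, as required.
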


\begin{proof}
Set 
$$p_M: 0\to M\xrightarrow{d_M} E_M\xrightarrow{d'_M} E'_M  
\quad \text{and} \quad 
p_N:0\to N\xrightarrow{d_N} E_N\xrightarrow{d'_N} E'_N.$$ 
If $\varphi=0$, then $M= \Ker\varphi$. 
So we may assume that $\varphi\neq 0$.   
Let $p_K: 0 \to K_\varphi \to E_K \to E_K'$ be the presentation of $K_\varphi$ as a straight module,  and $k_\varphi:K_\varphi \to M$ the clean map given by the kernel inclusion. 
Note that $\varphi\circ k_\varphi=0$ (although $\varphi\circ k_\varphi=0$ is principally an equality in $\Str_P S$, we can easily see that it remains true even when viewed in $\*Mod S$), and hence $\Im k_\varphi \subset \Ker \varphi$.  

First, we will show that $k_\varphi$ is injective also in $ \*Mod S$. For a contradiction, assume that $\Ker k_\varphi \ne 0$. Since $K_\varphi$ is a *essential extension of $\Nf(K_\varphi)$, we have $\Nf(\Ker k_\varphi)=\Ker k_\varphi \cap \Nf(K_\varphi) \ne 0$.
However, since $\Nf:\Str_P S\to \Sq A_P$ gives an equivalence, $\Nf(k_\varphi):\Nf(K_\varphi)\to \Nf(M)$ is an injection, that is,  $\Nf(\Ker k_\varphi) =\Ker \Nf(k_\varphi) = 0$. This is a contradiction. So $k_\varphi$ is injective, and we regard $K_\varphi$  as a  submodule of $\Ker \varphi$ through $k_\varphi$. 
Since $\Nf$ gives the equivalence, we have $\Nf(\Ker \varphi)=\Ker(\Nf(\varphi))=\Nf(K_\varphi)$,  
and we see that $\Ker\varphi$ is a $^*$essential extension of $K_\varphi$. Hence, $E_K$ is also a  *injective envelope of $\Ker \varphi$, and there exists the following commutative diagram 
\begin{equation}\label{K-diagram}
  \begin{CD}
    0 @>>>  K_\varphi  @>{d_K}>>  E_K  @>{d'_K}>>  E'_K  \\
    @.     @VV{k_\varphi}V  @VV{\mathrm{id}_{E_K}}V  @VV{\tau}V   @. \\
     0 @>>>  \Ker\varphi @>{d}>>  E_K @>{d'}>>  E,
  \end{CD}
\end{equation}
where $E$ is the ${}^*$injective envelope of $E_K/\Im d$. 
We have
\begin{equation}\label{NK-diagram}
  \begin{CD}
     0 @>>>  \Nf(K_\varphi)  @>{\Nf(d_K)}>>  \Nf(E_K)  @>>>  \Nf(\Im d'_K) @>>> 0   \\
    @.     @VV{\Nf(k_\varphi)}V  @VV{\mathrm{id}_{\Nf(E_K)}}V  @VV{\Nf(\tau|_{\Im d'_K})}V   @. \\
     0 @>>>  \Nf(\Ker\varphi) @>>>  \Nf(E_K) @>>>  \Nf(\Im d'), @.  
  \end{CD}
\end{equation}
where the exactness of the first row follows from Lemma \ref{N-exact-Ker}. We will show that $\tau$ is injective. 
Applying the Snake Lemma to the commutative diagram \eqref{NK-diagram}, we see that $\Nf(\tau|_{\Im d'_K})$ is injective. 
Since $\Nf(\Im d'_K)\subset \Im d'_K$ (resp. $\Im d'_K \subset E'_K$) is a ${}^*$essential extension by Lemma \ref{ess-Im} (resp. the minimality of $p_K$), the extension $\Nf(\Im d'_K) \subset E'_K$ is also, and 
we see that $\tau$ is an injection by an argument similar to the above.  
Applying the Snake Lemma to the commutative diagram \eqref{K-diagram}
(more precisely, we replace $E'_K$ by $\Im d'_K$, and $\tau$
by its restriction), we see that $k_\varphi$ is an isomorphism.
\end{proof}

\begin{cor}\label{inj-mono}
For $\varphi\in \Hom_{\Str_P S}(M,N)$, it is injective in the usual sense if and only if it is a monomorphism in $\Str_P S$. 
\end{cor}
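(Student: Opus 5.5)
We need to prove both directions of Corollary~\ref{inj-mono}: a clean map $\varphi$ is injective as a map of $\ZZ^n$-graded $S$-modules exactly when it is a monomorphism in the abelian category $\Str_P S$. Both directions come from Theorem~\ref{Ker is straight} (with Theorem~\ref{sq-str}), which identifies the kernel $\Ker\varphi$ computed in $\*Mod S$ with the kernel $K_\varphi$ computed in $\Str_P S$, as submodules of $M$.

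\emph{Injective implies monomorphism.} Suppose $\varphi$ is injective in the usual sense, and let $\psi_1,\psi_2\colon L\to M$ be clean maps with $\varphi\circ\psi_1=\varphi\circ\psi_2$ in $\Str_P S$. By Definition~\ref{str-map}, a morphism of $\Str_P S$ is determined by its underlying component $L\to M$, and composition in $\Str_P S$ is composition of these components. Hence $\varphi\circ\psi_1=\varphi\circ\psi_2$ holds as maps of modules. Since $\varphi$ is injective, $\psi_1=\psi_2$ as module maps, and therefore $\psi_1=\psi_2$ as morphisms of $\Str_P S$. Equivalently, the forgetful functor $\Str_P S\to\*Mod S$ is faithful (Remark~\ref{non-full subcategory}), and a faithful functor reflects monomorphisms.

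\emph{Monomorphism implies injective.} Suppose $\varphi$ is a monomorphism in $\Str_P S$. Since $\Str_P S$ is abelian (Theorem~\ref{sq-str}), its kernel $K_\varphi$ in $\Str_P S$ is $0$. The kernel inclusion $k_\varphi$ satisfies $\varphi\circ k_\varphi=0$, so monicity forces $k_\varphi=0$. As $k_\varphi$ is the kernel morphism, this means $K_\varphi=0$.

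By Theorem~\ref{Ker is straight}, $\Ker\varphi=K_\varphi$ as submodules of $M$, so $\Ker\varphi=0$ and $\varphi$ is injective.

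There is an alternative route for this direction that avoids Theorem~\ref{Ker is straight}. Since $\Nf$ is an equivalence, $\Nf(\varphi)$ is a monomorphism in $\Sq A_P$, hence injective, because $\Sq A_P$ is a full subcategory of $\*Mod S$ closed under kernels. If $\Ker\varphi$ were nonzero, then, because $M$ is a ${}^*$essential extension of $\Nf(M)$ (Lemma~\ref{ess extension}), $\Ker\varphi\cap\Nf(M)=\Ker\Nf(\varphi)$ would be nonzero, a contradiction.

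The only point requiring care is the identification of composition and equality of morphisms in $\Str_P S$ with those of the underlying module maps, and Definition~\ref{str-map} gives this directly. So no step should be a real obstacle; the substantive work is already contained in Theorem~\ref{Ker is straight}.
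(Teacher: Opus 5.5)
Your proof is correct and follows the paper's approach: the paper deduces the corollary in one line from the identification $\Ker\varphi=K_\varphi$ of Theorem~\ref{Ker is straight}, which is exactly what you use. Your supplementary arguments are also valid: the faithfulness of the forgetful functor for the direction ``injective implies monomorphism,'' and, for the converse, the essential-extension argument, which is the same as the first step in the paper's proof of Theorem~\ref{Ker is straight}.
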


\begin{proof}
Since $\Ker \varphi = K_\varphi$ in the notation of Theorem~\ref{Ker is straight}, the assertion follows. 
\end{proof}

\section{Leading monomials}

Let $P_{\ge 2}$ be the subset of $P$ consisting of the elements with rank $\ge 2$, that is, 
$P_{\ge 2}= P^* \setminus \{y_1, \ldots, y_n\}.$ 
Let $T$ be the set of (Laurent) monomials in $$K[t_1^{\pm 1}, \ldots, t_n^{\pm 1}, t_z^{-1} \mid z \in P_{\ge 2}],$$ that is,    
\begin{eqnarray*}
T &:=& \left \{ \, \prod_{i=1}^n t_i^{a_i} \cdot \prod_{z \in P_{\ge 2} } t_z^{-a_z} \mid a_i \in \ZZ, a_z \in \NN \, \right \} \\
&=& \left \{ \prod_{x \in P^*} t_x^{a_x} \mid a_x \in \ZZ 
\text{ and $a_x \le 0$ for $x \in  P_{\ge 2}$} \right \}.
\end{eqnarray*} 
For a monomial $\alpha =\prod_{x \in P^*} t_x^{a_x} \in T$, set 
$$\alpha^-:= \prod_{a_x <0} t_x^{-a_x} \in S \quad \text{and} \quad 
\alpha^+:= \prod_{a_i > 0} t_i^{a_i} \in K[t_1, \ldots, t_n] \subset S.$$ 
Note that $\alpha=\alpha^+/\alpha^-$. 

 Fix a total order on the set of variables of $S$ (i.e., $\{ t_x \mid x \in P^*\}$) such that any variable $t_z$ for $z \in P_{\ge 2}$ is smaller than $t_1, \ldots, t_n$, and $t_1 > t_2 >\cdots >t_n$ holds. 
We equip $S$ with the lexicographic order $\lexico$  with respect to the above total order of the variables (see \cite[Chapter 2, \S 2,  Definition 3]{CLO}).  We define the total order $\succ$ on $T$ as follows. For $\alpha, \beta \in T$; 
$$
\alpha \succ \beta \, \stackrel{\text{def}}{\Longleftrightarrow}   \alpha^- \lexico \beta^-, \ \text{or} \ \alpha^-=\beta^- \  \text{and} \ 
\alpha^+ \lexico \beta^+.
$$

\begin{lem}
The above order $\succ$ on $T$ is a well-order, that is,  any non-empty subset of $T$ has a least element. \end{lem}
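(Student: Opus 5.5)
The order $\succ$ compares $\alpha^-$ first and $\alpha^+$ second, so I will reduce the statement to the classical fact that the lexicographic order $\lexico$ on the monomials of a polynomial ring is a monomial order, hence a well-order (\cite[Chapter 2]{CLO}, via Dickson's lemma). First I check that $\succ$ is a total order on $T$. The map $T \ni \alpha \mapsto (\alpha^-, \alpha^+)$ is injective, since $\alpha = \alpha^+/\alpha^-$. Moreover, $\succ$ is exactly the lexicographic order on pairs built from $\lexico$ in each coordinate. Totality and transitivity are therefore inherited from $\lexico$.

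Next, let $\emptyset \ne U \subset T$. The set $\{\alpha^- \mid \alpha \in U\}$ is a non-empty set of ordinary monomials of $S$. Since $\lexico$ is a well-order on these monomials, this set has a least element $m$. Put $U_m := \{\alpha \in U \mid \alpha^- = m\}$, which is non-empty.

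The set $\{\alpha^+ \mid \alpha \in U_m\}$ is a non-empty set of monomials of $K[t_1,\dots,t_n] \subset S$. Here the restriction of $\lexico$ is again a lexicographic, hence well, order, so this set has a least element $m'$. Let $\alpha_0 \in U_m$ satisfy $\alpha_0^+ = m'$; it is unique by injectivity of $\alpha \mapsto (\alpha^-, \alpha^+)$.

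Now take any $\beta \in U$ with $\beta \ne \alpha_0$. Either $\beta^- \lexico m$, or $\beta^- = m$ and then $\beta^+ \lexico m'$. In both cases $\beta \succ \alpha_0$, so $\alpha_0$ is the least element of $U$.

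The only point that needs care is the claim that $\lexico$ is a well-order on $\NN^{P^*}$ with a finite variable set. This is standard: $\lexico$ is a monomial order, and every monomial order is a well-order by Dickson's lemma. Alternatively, one can argue directly by induction on the number of variables, comparing exponents of the largest variable first. There is no real obstacle here. One should just note that, although $T$ contains Laurent monomials, both components $\alpha^-$ and $\alpha^+$ lie in $S$, so no negative exponents ever enter the lexicographic comparisons.
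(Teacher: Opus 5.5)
Your proof is correct and takes the same approach as the paper. The paper's proof is the single remark that the claim follows from $\lexico$ being a well-order; you spell this out by minimizing first $\alpha^-$ and then $\alpha^+$, using that $\alpha\mapsto(\alpha^-,\alpha^+)$ is injective because $\alpha=\alpha^+/\alpha^-$.
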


\begin{proof}
The assertion easily follows  from the fact that $\lexico$ is a well-order. 
\end{proof}

For $f=\sum_{i=1}^m r_i(u_i \otimes v_i) \in \EE_x$ such that $u_i \otimes v_i$ is a monomial, set 
$$T_f:=\{ u_iv_i \in T \mid r_i \ne 0\}.$$
Here the product $u_iv_i$ is computed in the Laurent polynomial ring. Since $T_f$ is a finite set, we have the largest element $\lm(f)$ of $T_f$.

 Let $E$ be a  finite direct sum of $\EE_x$'s for various $x \in P$.  
Then there are $x_1, \ldots, x_m \in P$ such that 
$$E=\bigoplus_{i=1}^m \EE_{x_i}$$
(we allow the case $x_i=x_j$). 
For $0 \ne f \in E$, we can take $f_i \in \EE_{x_i}$ for $1 \le i \le m$ such that $f=\sum_{i=1}^mf_i$. Set 
$$\lm(f):= \text{the largest element of $\{\lm(f_i) \mid 1 \le i \le m, f_i \ne 0  \}$ w.r.t. $\succ$}. $$

\begin{lem}
With the above situation, if $\lm(f) \in K[t_1, \ldots, t_n]$, then $f \in \Nf(E)$. 
\end{lem}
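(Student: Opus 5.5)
The plan is to use the definition of $\succ$ to force every monomial occurring in $f$ to have trivial negative part. From this we read off that each component $f_i$ lies in $S_{x_i}\subset \EE_{x_i}$, and then we conclude by Lemma~\ref{extension to E}~(1), which gives $\Nf(E)=\bigoplus_{i} S_{x_i}$.

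\emph{Step 1: every monomial in $f$ has $\alpha^-=1$.} Write $f=\sum_{i=1}^m f_i$ with $f_i\in\EE_{x_i}$, and let $\alpha\in T_{f_i}$ for some $i$ with $f_i\neq 0$. By definition of $\lm$, we have $\lm(f)\succeq \lm(f_i)\succeq \alpha$. Since $\lm(f)\in K[t_1,\dots,t_n]$, all its exponents are nonnegative, so $\lm(f)^-=1$. In the lexicographic order on monomials of $S$, the monomial $1$ is the smallest element. The definition of $\succ$ compares negative parts first, so $\alpha\preceq\lm(f)$ forces $\alpha^-\preceq_{\text{lex}} 1$. Hence $\alpha^-=1$, i.e.\ $\alpha\in K[t_1,\dots,t_n]$ has only nonnegative exponents.

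\emph{Step 2: translate back to the monomials $u\otimes v$ of $\EE_{x_i}$.} Fix $x=x_i$ and renumber so that $\{j\mid y_j\le x\}=\{1,\dots,r\}$. A monomial of $\EE_x$ has the form $u\otimes v$, where $u=\prod_{j\le r}t_j^{a_j}$ with $a_j\in\ZZ$ and $v=\prod_{z\in P^*_{-x}}t_z^{-a_z}$ with $a_z\in\NN$. The variables occurring in $u$ and in $v$ are disjoint, since $P^*_{-x}$ excludes $y_1,\dots,y_r$. Therefore no cancellation occurs in the product $uv$ computed in the Laurent ring. Consequently $(uv)^-=1$ holds if and only if $v=1$ and all $a_j\ge 0$. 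By Step 1, every monomial appearing with nonzero coefficient in $f_i$ is of the form $u\otimes 1$ with $u\in K[t_1,\dots,t_r]$ an honest monomial.

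\emph{Step 3: identify these monomials with $S_x$.} The submodule of $\EE_x$ generated by $1_x$ is $S_x$. A variable $t_w$ acts on $u\otimes 1$ through $\pi_x(t_w)\cdot u$, because $t_w\cdot 1=0$ in the second tensor factor. Hence $S\cdot 1_x$ is exactly the $K$-span of the monomials $u\otimes 1$ with $u\in\NN^r$-monomials. So $f_i\in S_{x_i}$ for every $i$, and therefore $f\in\bigoplus_i S_{x_i}=\Nf(E)$ by Lemma~\ref{extension to E}~(1).

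The only point needing care is Step~2: we must check that the passage $u\otimes v\mapsto uv$ loses no information about negative exponents. This holds because of the disjointness of the variable sets. It is also used that negative powers of rank-one variables $t_j$ with $y_j\not\le x$ (which may appear in $v$) contribute to $\alpha^-$, so they are excluded as well.
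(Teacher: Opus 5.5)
Your proposal is correct and is essentially the paper's argument: the paper proves this with the single line ``By definition,'' and your three steps carefully unwind that definition. The key observation is that $\succ$ compares negative parts first, so $\lm(f)^-=1$ forces every monomial of $f$ to have trivial negative part, which puts $f$ in $\Nf(E)$.
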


\begin{proof}
 By definition.    
\end{proof}

\begin{lem}\label{clean and lm}
  Let $E$ and $E'$ be finite direct sums of $\EE_x$'s for various $x \in P$, and $\varphi:E\to E'$ a clean map. 
If $\varphi(f) \ne 0$ for $f \in E$, we have 
$\lm(\varphi(f)) \preceq \lm(f)$. 
\end{lem}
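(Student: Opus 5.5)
The plan is to reduce to the explicit clean map between two indecomposables $\EE_x \to \EE_{x'}$ with $x$ covering $x'$, and check the inequality monomial by monomial.

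\medskip

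\noindent\emph{Reduction to single monomials.} Write $E=\bigoplus_j \EE_{x_j}$ and $E'=\bigoplus_k \EE_{x'_k}$. Then $\varphi$ is a matrix of components $\varphi_{kj}:\EE_{x_j}\to \EE_{x'_k}$. Since $\Lf(E)$ and $\Lf(E')$ are the direct sums of the $\Lf$ of the summands, composing $\varphi$ with the canonical inclusions and projections preserves $\Lf$. Hence each $\varphi_{kj}$ is clean.

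By \cite[Lemma~3.7 and Proposition~3.10]{SY}, each $\varphi_{kj}$ is either $0$ (when $x_j\not\ge x'_k$) or a scalar multiple of the canonical clean map. For $x_j=x'_k$ this is the identity, and for $x_j>x'_k$ it is a composition of the explicit maps \eqref{def of varphi} along a saturated chain.

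Write $f=\sum_j f_j$ with $f_j\in\EE_{x_j}$, and expand each $f_j$ in monomials. Then every monomial occurring in $\varphi(f)$ occurs in $\varphi_{kj}(\mu)$ for some monomial $\mu$ of some $f_j$. Since $\lm$ of a sum is at most the maximum of the $T$-products of the monomials appearing (cancellation can only lower it), it suffices to prove the following claim.

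\medskip

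\noindent\emph{Claim.} For the explicit clean map $\varphi:\EE_x\to\EE_{x'}$ with $x$ covering $x'$, and every monomial $\alpha$ as in \eqref{alpha}, every monomial occurring in $\varphi(\alpha)$ has $T$-product $\preceq$ the $T$-product of $\alpha$.

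Given the claim, transitivity of $\preceq$ handles compositions along chains, and scalars do not affect monomials.

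\medskip

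\noindent\emph{Proof of the claim.} Use the notation $Z,W$ of \eqref{def of Z} and \eqref{def of W}. The $T$-product of $\alpha$ is
$$\prod_{i\le r}t_i^{a_i}\,\bt_Z^{-\bb}\bt_W^{-\bc}.$$
The term $(\bt_Z^\bd|\alpha)$ has $T$-product
$$\prod_{i\le r}t_i^{e_i}\,\bt_Z^{-\bb-\bd}\bt_W^{-\bc},$$
where $\prod t_i^{e_i}=\pi_x(\bt_Z^\bd)\prod t_i^{a_i}$.

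Each $z\in Z$ satisfies $z\le x$, $z\not\le x'$ and $z\ne y_r$. So $y_r\le z$, and $\pi_x(t_z)$ is a monomial in $t_1,\dots,t_r$ containing $t_r$. Consequently $e_i\ge a_i$ for all $i\le r$, and $e_r\ge a_r+\sum_z d_z$.

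\begin{itemize}
\item If $\bd=\b0$, the term has the same $T$-product as $\alpha$.
\item If $\bd\ne\b0$, the term is nonzero only when $e_r\le 0$. In that case the negative part loses exponent on $t_r$: it drops from $-a_r$ to $-e_r<-a_r$. The negative parts on $t_1,\dots,t_{r-1}$ weakly drop, and nothing changes on $t_{r+1},\dots,t_n$.
\end{itemize}

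In the second case, because every $t_z$ with $z\in P_{\ge2}$ is smaller than all of $t_1,\dots,t_n$, the lexicographic comparison of negative parts is decided among $t_1,\dots,t_r$. The first index where the exponents differ has a smaller exponent in the image. Hence the image's negative part is $\prec_{\mathrm{lex}}$ that of $\alpha$, so the image monomial is strictly $\prec\alpha$, regardless of the larger $t_Z$-exponents.

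\medskip

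\noindent\emph{Main obstacle.} The delicate point is this comparison. The image gains larger negative exponents in the $t_Z$ variables, which could a priori make it larger in the lexicographic order on negative parts. The chosen variable order, with all $t_z$ for $z\in P_{\ge2}$ below $t_1,\dots,t_n$, together with the fact that $y_r\le z$ for every $z\in Z$, is exactly what makes the strict decrease in the $t_r$-exponent dominate.
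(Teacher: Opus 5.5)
Your proposal is correct and follows the same route as the paper, which reduces to a single monomial and a clean map $\EE_x\to\EE_{x'}$ and then appeals to the explicit formula \eqref{def of varphi}. You supply the details the paper leaves implicit: cleanness of the matrix components, composition along covers, and the observation that $y_r\le z$ for all $z\in Z$ forces a strict drop of the negative $t_r$-exponent when $\bd\ne\b0$, which settles the lexicographic comparison because every $t_z$ with $z\in P_{\ge 2}$ comes last. (One cosmetic point: the relabeling $\{i\mid y_i\le x\}=\{1,\ldots,r\}$ is not compatible with the fixed order $t_1>\cdots>t_n$. Your comparison only uses that every rank-one negative exponent weakly drops and one strictly drops, so it does not depend on the labeling.)
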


\begin{proof}
It suffices to consider the case $E=\EE_x$ and $E'=\EE_{x'}$ for some $x, x' \in P$ and $f$ is a monomial. In this case, the assertion follows from the description of the clean map given by \cite{SY} (see the end of \S 2). 
\end{proof}

For a monomial $u \otimes v \in \EE_{x'}$ with 
$$uv=\prod_{x \in P^*} t_x^{a_x} \in T,$$ we say $t_x$  {\it appears with negative powers} in $u \otimes v$, if $a_x <0$. 
Let 
$$\nv(u \otimes v):= \{ t_x \mid x \in P^*, \text{$t_x$ appears with negative powers in $u \otimes v$}\}.$$ 
For 
$$f=\sum_{i=1}^mf_i \in \bigoplus_{i=1}^m \EE_{x_i},$$
we say $t_x$  appears with negative powers in $f$, if it appears  with negative powers in some term of some $f_i$, and  we define $\nv(f)$ in the natural way. 
Clearly, $\nv(f)=\emptyset$ if and only if $f \in \Nf(E)$. We also  set 
$$\lv(f):=\text{the largest variable of $\nv(f)$ w.r.t.  $\succ$}.$$
If $\nv(f)=\emptyset$, we set $\lv(f)=0$. For convenience, 0 is smaller than any monomials with respect to our order $\prec$. 
Clearly, $\lv(f) = 0$ if and only if $f \in \Nf(E)$. If $\lv(f) \ne 0$, then we have  $\lv(f)=\lv(\lm(f))$. 

For $x\in P$ and $z\in P_{\geq2}$, set
\begin{equation*}
 \wt_z^{\,x}
 :=
 \begin{cases}
 t_z-\displaystyle\prod_{y_i\leq z}t_i,
       &\text{if }z\leq x,\\[2mm]
 t_z, &\text{if }z\not\leq x.
 \end{cases}
\end{equation*}
For a monomial $u\otimes v\in\EE_x$, define
$$\delta_z^x(u\otimes v):=u\otimes t_zv.$$
We will extend $\delta_z^x$
$K$-linearly to $\EE_x$. Note that
\begin{equation}\label{delta-as-multiplication}
 \delta_z^x(f)=\wt_z^{\,x}f
 \qquad
 (f\in\EE_x).
\end{equation}
For a module $E=\bigoplus_j\EE_{x_j},$
we define
\[
 \delta_z^E:=\bigoplus_j\delta_z^{x_j} \in \operatorname{End}_K E.
\]
We also set
$$E^{\nf}
 :=
 \{f\in E\mid
   \nv(f)\cap\{t_1,\ldots,t_n\}=\emptyset
 \}.$$
Here, the superscript ``nf" stands for ``negative $t_i$ free".

\begin{lem}\label{delta-lower}
Let $z\in P_{\geq2}$ and $E$ be a finite direct sum of $\EE_x$'s.
Then the following hold.
\begin{enumerate}
\item
For $f\in E$, 
$\delta_z^E(f)=0$ if and only if $t_z\notin\nv(f).$
\item
If $f\neq0$ and $\lv(f)=t_z$, then we have 
$\delta_z^E(f)\neq0$,
$\lm(\delta_z^E(f))\prec\lm(f)$,
and  $\lv(\delta_z^E(f))\preceq t_z.$
\item
For $f\in E$, we have $\delta_z^E(f\cdot t_z^{-1})=f.$ 
If $f\neq0$ and $\lv(f)\preceq t_z$, then
$\lv(f\cdot t_z^{-1})=t_z.$ 
\end{enumerate}
\end{lem}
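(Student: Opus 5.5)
The plan is to reduce everything to a single monomial $u\otimes v$ of a single summand $\EE_x$. The maps $\delta_z^E$, $\nv$, $\lv$, $\lm$ and the operation $f\mapsto f\cdot t_z^{-1}$ are all defined summandwise and monomialwise: the last one is the $K$-linear map $u\otimes v\mapsto u\otimes t_z^{-1}v$. So I will first record how $\delta_z^x$ acts on a monomial, and then assemble statements (1)--(3) by linearity.

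\textbf{Step 1: action of $\delta_z^x$ on a monomial.} Since $z\in P_{\ge 2}$, the variable $t_z$ is never one of $t_1,\dots,t_r$. Hence $t_z$ only occurs in the factor $v=\prod_{w\in P^*_{-x}}t_w^{-a_w}$. By the definition of the $S$-action on $K[t_w^{-1}\mid w\in P^*_{-x}]$, the element $t_zv$ is the monomial with $a_z$ replaced by $a_z-1$ if $a_z\ge 1$, and is $0$ if $a_z=0$. Therefore:
\begin{itemize}
\item $\delta_z^x(u\otimes v)\neq 0$ if and only if $t_z\in\nv(u\otimes v)$;
\item in that case $\delta_z^x(u\otimes v)$ is again a monomial, and its product in $T$ equals $(uv)\cdot t_z$;
\item $\delta_z^x$ is injective on the set of monomials containing $t_z$ with negative power.
\end{itemize}

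\textbf{Step 2: statement (1).} Write $f=\sum c_j m_j$ with distinct monomials $m_j$ and $c_j\neq 0$. Terms without $t_z$ in their negative part are killed. The remaining terms go to pairwise distinct monomials, so no cancellation can occur. Hence $\delta_z^E(f)=0$ if and only if no $m_j$ contains $t_z$ with negative power, that is, $t_z\notin\nv(f)$.

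\textbf{Step 3: statement (2).} Since $\lv(f)=\lv(\lm(f))=t_z$, the leading monomial $\lm(f)$ contains $t_z$ with negative power. By Step 2, $\delta_z^E(f)\ne 0$, and its monomials are exactly the elements $m_j t_z$ with $t_z\in\nv(m_j)$. For such $m_j$, we have $(m_jt_z)^-=m_j^-/t_z$ and $(m_jt_z)^+=m_j^+$.

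Because $\lexico$ is a monomial order, dividing two monomials of $S$ by a common factor $t_z$ preserves their comparison. Hence $m\succ m'$ implies $mt_z\succ m't_z$. Consequently $\lm(\delta_z^E f)=\lm(f)\,t_z$. Moreover $\lm(f)\,t_z\prec\lm(f)$, because $\lm(f)^-/t_z$ is a proper divisor of $\lm(f)^-$ and so is lex-smaller. Finally $\nv(m_jt_z)\subseteq\nv(m_j)$, so $\nv(\delta_z^E f)\subseteq\nv(f)$, which gives $\lv(\delta_z^Ef)\preceq\lv(f)=t_z$.

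\textbf{Step 4: statement (3).} The identity $\delta_z^E(f\cdot t_z^{-1})=f$ holds on each monomial, since $t_z\cdot(t_z^{-1}v)=v$. It therefore holds for $f$ by linearity. If $f\ne0$, every monomial of $f\cdot t_z^{-1}$ contains $t_z$ with negative power. Hence $\nv(f\cdot t_z^{-1})=\nv(f)\cup\{t_z\}$. Since every element of $\nv(f)$ is $\preceq t_z$ by assumption, the largest element of this set is $t_z$.

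The only delicate point is in Step 3: we need that $\lv(f)=t_z$ forces $t_z$ into $\nv(\lm(f))$, which is exactly the stated identity $\lv(f)=\lv(\lm(f))$. We also need the multiplicativity of $\lexico$ to control the leading monomial after cancelling $t_z$. I expect this to be the main (though mild) obstacle; the rest is bookkeeping.
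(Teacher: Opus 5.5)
Your proposal is correct and is the same direct monomial-by-monomial verification that the paper intends; the paper's own proof consists only of the word ``Easy.'' You fill in the details correctly: you use $\lv(f)=\lv(\lm(f))$ to put $t_z$ into $\nv(\lm(f))$, and the multiplicativity of the lexicographic order to conclude that $\lm(\delta_z^E f)=\lm(f)\,t_z\prec\lm(f)$.
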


\begin{proof}
Easy. 
\end{proof}

\begin{lem}\label{delta-clean}
Let $\varphi:E\longrightarrow E'$ be a clean map between finite direct sums of $\EE_x$'s. For
$z\in P_{\geq2}$ and $f\in E^{\nf}$, we have
\[
\varphi(f)\in(E')^{\nf}
\]
and
\begin{equation}\label{delta-clean-eq}
 \delta_z^{E'}(\varphi(f))
 =
 \varphi(\delta_z^E(f)).
\end{equation}
\end{lem}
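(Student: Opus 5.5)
Lemma~\ref{delta-clean} claims two things: clean maps send $E^{\nf}$ into $(E')^{\nf}$, and on $E^{\nf}$ they commute with the operators $\delta_z$. The plan is to reduce to a single clean map between indecomposables and then read both claims off the explicit formula \eqref{def of varphi}.

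\textbf{Reduction.} Both assertions are $K$-linear in $f$ and compatible with direct sums, since $\delta_z^E$ and $\delta_z^{E'}$ act componentwise. So it suffices to treat $E=\EE_x$, $E'=\EE_{x'}$ with $x\ge x'$ and a clean map $\varphi$ between them. By \cite{SY}, such a $\varphi$ is a scalar multiple of the canonical clean map. That map is the identity when $x=x'$, and otherwise it is a composite of maps of the form \eqref{def of varphi} along a saturated chain of covers. Both properties are stable under composition: the first obviously, and for the second we use that the intermediate elements stay negative-$t_i$ free, by the first property applied step by step. So we may assume $x$ covers $x'$, with the notation $Z$, $W$ of \eqref{def of Z} and \eqref{def of W}. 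Finally, by linearity we take $f=\alpha$ to be a monomial as in \eqref{alpha} with $a_1,\dots,a_r\ge 0$; this is exactly the condition for $\alpha$ to lie in $\EE_x^{\nf}$.

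\textbf{First assertion.} For each $\bd$, we have $\pi_x(\bt_Z^\bd)\prod t_i^{a_i}=\prod t_i^{e_i}$ with $e_i\ge a_i\ge 0$. The term $(\bt^\bd_Z|\alpha)$ is nonzero only when $e_r\le 0$. Hence, in every surviving term, $t_1,\dots,t_{r-1}$ appear with nonnegative exponents, and $t_r$ (which in $\EE_{x'}$ is a variable of the form $t_w$, $w\notin\{y_1,\dots,y_{r-1}\}$) appears with exponent $e_r$. But $e_r\ge a_r\ge 0$, so every surviving term has $e_r=0$. Thus $\varphi(\alpha)\in\EE_{x'}^{\nf}$. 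In fact, this shows that only those $\bd$ with $e_r=0$ contribute, and each of them forces $a_r=0$ together with $d_z=0$ for all $z\in Z$ (every $z\in Z$ lies above $y_r$). So $\varphi(\alpha)$ is either $0$ or the single term $\prod_{i<r}t_i^{a_i}\otimes \bt_Z^{-\bb}\bt_W^{-\bc}$.

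\textbf{Second assertion.} The same simplification applies to $\delta_z^{x}(\alpha)=u\otimes t_zv$, which is again a monomial with nonnegative $t_i$-exponents. So both sides of \eqref{delta-clean-eq} are computed by the single-term formula: $\varphi$ kills a monomial if $a_r>0$, and otherwise just reinterprets the factor $u\otimes v$ inside $\EE_{x'}$. Applying $\delta_z$ raises the exponent $-b_z$ or $-c_z$ by one (or kills the monomial), and the term carries no binomial coefficients since $\bd=0$. The equality then reduces to a short case check on $z$.
\begin{itemize}
\item If $z\in W$, or $z\in Z$ with $z\not\le x'$: both sides change the exponent of $t_z$ in the same way. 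Here we use that $Z\cup W$ is precisely the set of variables that are inverted in $\EE_{x'}$ apart from $t_r$.
\item If $z$ corresponds to a variable that is inverted in $\EE_x$ but becomes a polynomial-side variable in $\EE_{x'}$: this cannot happen, because the polynomial side only shrinks when passing from $x$ to $x'$.
\item If $z\le x'$: then $t_z$ is inverted in neither module, so $\delta_z$ is zero on both sides.
\end{itemize}

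\textbf{Main obstacle.} The hardest step is the reduction to covers, specifically checking that $\delta_z$ is defined consistently with the index sets $P^*_{-x}$ versus $P^*_{-x'}$. The delicate point is $t_r$: it is a polynomial variable in $\EE_x$ but an inverted variable in $\EE_{x'}$. I would handle this by noting $z\in P_{\ge 2}$, so $z\ne y_r$ and $\delta_{y_r}$ never occurs. The identity \eqref{delta-as-multiplication} could give an alternative route via $S$-linearity, but $\wt_z^{x}\ne \wt_z^{x'}$ when $z\le x$, $z\not\le x'$, so I will rely on the explicit formula instead.
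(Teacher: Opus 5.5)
Your argument is correct in substance but follows a different route from the paper. One of your case bullets also contains a false claim, though that case is already covered elsewhere.

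\textbf{Comparison with the paper.} The paper never reduces to covers and never uses the explicit formula \eqref{def of varphi} in this proof.

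For the first assertion, it appeals to Lemma~\ref{clean and lm}. Since $t_1,\dots,t_n$ are the largest variables and $\succ$ compares $\alpha^-$ first, a clean map cannot create a negative power of any $t_i$ from an element of $E^{\nf}$.

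For the second assertion, it uses $S$-linearity of a clean $\psi:\EE_x\to\EE_{x'}$ together with $\delta_z^x(f)=\wt_z^{\,x}f$, split into three cases. The case $z\le x$, $z\not\le x'$ is exactly the one you cite as the reason to abandon this route. There the paper observes that some $y_i\le z$ has $y_i\not\le x'$. Because $\psi(f)$ has no negative $t_i$, this gives $t_i\psi(f)=0$, so $\prod_{y_i\le z}t_i$ kills $\psi(f)$. Hence $\wt_z^{\,x}$ and $\wt_z^{\,x'}=t_z$ agree on $\psi(f)$. This handles every clean map $\EE_x\to\EE_{x'}$ at once, with no chain-of-covers decomposition.

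Your route proves a stronger structural fact instead. On $\EE_x^{\nf}\cong S_x\otimes_K B$, with $B=K[t_w^{-1}\mid w\in P_{\ge2}]$, a cover map is just $\Nf(\varphi)\otimes\id_B$: it kills monomials with $a_r>0$, and otherwise drops $t_r^0$ and keeps the $\bt_Z^{-\bb}\bt_W^{-\bc}$ factor. Commutation with $\delta_z$, which acts only on the $B$-factor, is then immediate. This is the same fact the paper later invokes in the proof of Lemma~\ref{nf-change}, citing only the explicit description of clean maps, so your computation supplies it. The cost is dependence on the explicit formula and on the decomposition into covers.

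\textbf{The false bullet.} Your third bullet says: ``If $z\le x'$: then $t_z$ is inverted in neither module, so $\delta_z$ is zero on both sides.'' This is wrong.
\begin{enumerate}
\item Only $t_1,\dots,t_r$ sit on the Laurent side of $\EE_x$. Since $P^*_{-x}\supseteq P_{\ge2}$, every $t_z$ with $z\in P_{\ge2}$ is inverted in every $\EE_x$.
\item A $z\in P_{\ge2}$ with $z\le x'$ lies in $W$.
\item For example, $\delta_z^x(1\otimes t_z^{-1})=1_x\neq 0$.
\end{enumerate}
No harm results: your first bullet already treats every $z\in W$ correctly, and $P_{\ge2}\subseteq Z\cup W$. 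Simply delete the third bullet.

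A minor point: $\alpha\in\EE_x^{\nf}$ also requires $c_{y_j}=0$ for $j>r$, not only $a_1,\dots,a_r\ge 0$. This is harmless, because the $\bt_W^{-\bc}$ factor is carried over unchanged.
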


\begin{proof}
The first assertion is immediate from Lemma~\ref{clean and lm}, so it suffices to prove \eqref{delta-clean-eq} for a nonzero clean
map $\psi:\EE_x\longrightarrow \EE_{x'}$ with  $x\geq x'$. Set
$$p_z:=\prod_{y_i\leq z}t_i.$$
We consider the following three cases.

Assume first that $z\leq x'$. Then we have $z\leq x$ as well, hence $\wt_z^{\,x}=t_z-p_z=\wt_z^{\,x'}$, and we have
$$\psi(\delta_z^x(f))=\psi((t_z-p_z)f)=(t_z-p_z)\psi(f)=\delta_z^{x'}(\psi(f)).
$$
Next assume that $z\nleq x$. Then $z\nleq x'$, and $\widetilde t_z^{\,x}=t_z=\widetilde t_z^{\,x'}$, and the assertion follows.

It remains to consider the case $z\leq x$ and $z\not\leq x'$.
Since $x',z\leq x$, there is $y_i$ such that $y_i\leq z, y_i\not\leq x'$.
Since $\psi(f)\in(\EE_{x'})^{\nf}$, no monomial occurring in
$\psi(f)$ contains a negative power of $t_i$. Now $y_i\not\leq x'$, therefore $t_i\psi(f)=0$. Consequently, $p_z\psi(f)=0.$
Now we have $\widetilde t_z^{\,x'}=t_z$, and 
$$\psi(\delta_z^x(f))= \psi((t_z-p_z)f)=(t_z-p_z)\psi(f)= t_z\psi(f)=
 \delta_z^{x'}(\psi(f)).$$
\end{proof}

\begin{lem}\label{nf-change}
Let $E$ and $E'$ be finite direct sums of $\EE_x$'s for various $x \in P$, and $\varphi:E\to E'$ a clean map. 
For any homogeneous  $\alpha\in E$ with $\varphi(\alpha)\in (E')^{\nf}$, 
there exists a homogeneous element $\alpha'\in E^{\nf}$ such that $\varphi(\alpha)=\varphi(\alpha')$.
\end{lem}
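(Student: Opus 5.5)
We argue by well-founded induction on $\lm(\alpha)$ with respect to $\succ$. The aim is to remove, one at a time, the monomials of $\alpha$ containing negative powers of some $t_i$, without changing $\varphi(\alpha)$. If $\alpha\in E^{\nf}$ there is nothing to prove. Otherwise some monomial of some component of $\alpha$ contains a negative power of a rank-one variable $t_i$.

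The first step is to isolate the obstruction. Decompose $\alpha=\alpha'+\alpha''$ with $\alpha'\in E^{\nf}$ and $\alpha''\ne 0$, where $\alpha''$ is the sum of all terms with some negative $t_i$-power; both pieces are homogeneous. By Lemma~\ref{delta-clean}, $\varphi(\alpha')\in (E')^{\nf}$, so $\varphi(\alpha'')\in(E')^{\nf}$ as well. It therefore suffices to find $\beta\in E^{\nf}$, homogeneous of the same degree, with $\varphi(\alpha'')=\varphi(\beta)$.

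The second step analyses a leading term. Take $\lm(\alpha'')$; it has a negative power of some $t_i$. Using the explicit description of clean maps from \cite[\S 3]{SY} (end of \S 2), I would show the following: a monomial with a negative $t_i$-power is sent by a clean map either to terms that still carry a negative $t_i$-power (when $y_i\le x'$), or to terms in which the variables $t_z$ with $z\ge y_i$, $z\not\le x'$ appear with negative powers. The second case arises because the $t_i$-exponent is absorbed into $\bt_Z^{-\bb-\bd}$, which is exactly how $Z$ enters the formula for $(\bt^\bd_Z\mid\alpha)$. In the first case the leading part must cancel, and it cancels against images of other terms with the same $\lm$.

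The main difficulty is to produce an actual replacement. The natural candidate uses the operators $\delta_z$ together with multiplication by $t_z^{-1}$ (Lemma~\ref{delta-lower}(3)). Let $m$ be the leading monomial, containing $t_i^{-a}$ with $a>0$, and consider $\gamma:=\alpha''-c\,\tilde m$. Here $\tilde m$ is a suitable element of the form (monomial without negative $t_i$) $\cdot\,t_z^{-1}\cdots$, chosen of the same degree, with $\varphi(\tilde m)$ matching the contribution of $m$ and with $\lm(\tilde m)$ either $\prec m$ or in $E^{\nf}$. Heuristically, $t_i^{-1}$ and $t_z^{-1}t_j$ for $z\in[y_i\vee y_j]$ agree after applying $\varphi$. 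I expect this to go through using \eqref{delta-as-multiplication}: $\wt_z f$ acts like $\delta_z$, and $\varphi$ is $S$-linear. Hence if $f$ and $g$ satisfy $\wt_z^{\,x}$-relations, their images agree modulo terms with a smaller leading monomial.

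Once $\lm$ strictly decreases by Lemma~\ref{clean and lm}, with the correction lying in $E^{\nf}$, induction on the well-order $\succ$ finishes the argument. Homogeneity is preserved throughout because all operations respect the $\ZZ^n$-degree. The hardest point is verifying that the leading negative-$t_i$ term can always be traded in this way, which needs a careful case split according to whether $y_i\le x$ for the relevant summand $\EE_x$.
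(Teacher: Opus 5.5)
\textbf{There is a genuine gap.} The only substantive step is trading the leading monomial $m$ of $\alpha''$, which carries a negative power of some $t_i$, for something harmless. You never carry this step out, and the mechanism you suggest for it does not work.

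\emph{The heuristic is false.} The claim that $t_i^{-1}$ and $t_z^{-1}t_j$ (with $z\in[y_i\vee y_j]$) agree after applying $\varphi$ fails already for $\varphi=\id_{\EE_x}$. It also fails for a genuine clean map. Take $\varphi:\EE_x\to\EE_{x'}$ with $x$ covering $x'$, $y_r\le x$ and $y_r\not\le x'$. The formula at the end of \S 2 gives
\[
\varphi(t_r^{-1}\otimes 1)=1\otimes t_r^{-1}+\sum_{z\in Z}\Bigl(\prod_{i<r,\ y_i\le z}t_i\Bigr)\otimes t_z^{-1},
\]
whereas for $z\in Z\cap[y_r\vee y_j]$ one gets $\varphi(t_j\otimes t_z^{-1})=t_j\otimes t_z^{-1}$.

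\emph{The induction does not reduce the problem.} Every monomial with a negative $t_i$-power is $\succ$-larger than every monomial of $E^{\nf}$. So at each step you need an element of $\Ker\varphi$ whose leading term is exactly the leading term of $\alpha''$. Your alternative, $\lm(\tilde m)\prec m$, would not even remove $m$. Given the lemma, $\alpha''-\beta$ is such an element; without the lemma you have no way to produce one, so this step is as strong as the statement itself. The hypothesis $\varphi(\alpha)\in(E')^{\nf}$ is a global cancellation condition and cannot be exploited monomial by monomial.

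\emph{The missing idea is to avoid analysing the negative-$t_i$ part of $\alpha$ at all.}
\begin{enumerate}
\item Let $B=K[t_z^{-1}\mid z\in P_{\ge 2}]$ and identify $E^{\nf}\cong\Nf(E)\otimes_K B$ and $(E')^{\nf}\cong\Nf(E')\otimes_K B$. The explicit formula for clean maps shows $\varphi|_{E^{\nf}}=\Nf(\varphi)\otimes\id_B$.
\item Write $\varphi(\alpha)=\sum_i u_i\otimes v_i$ with $u_i\in\Nf(E')$ and distinct monomials $v_i\in B$.
\item Suppose some $u_j\notin\Im\Nf(\varphi)$. Since $\Coker\Nf(\varphi)\in\Sq A_P$ embeds into some $J=\bigoplus_x S_x^{r_x}$, there is $g:\Nf(E')\to J$ with $\Ker g=\Im\Nf(\varphi)$, so $g(u_j)\neq 0$.
\item The maps $\C(g)\circ\varphi$ and $0$ are clean maps $E\to\C(J)$ with the same restriction to $\Nf(E)$. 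By the uniqueness in Lemma~\ref{extension to E}(2) (equivalently, Theorem~\ref{sq-str}), they are equal.
\item Evaluate at the whole of $\alpha$, including its negative-$t_i$ terms. Since $\C(g)|_{(E')^{\nf}}=g\otimes\id_B$, this gives $0=\sum_i g(u_i)\otimes v_i$, contradicting $g(u_j)\neq 0$.
\item Hence each $u_i$ lifts to a homogeneous $u_i'\in\Nf(E)$, and $\alpha':=\sum_i u_i'\otimes v_i\in E^{\nf}$ satisfies $\varphi(\alpha')=\varphi(\alpha)$.
\end{enumerate}
The faithfulness of $\Nf$ on clean maps is what replaces your term-by-term trading.
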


\begin{proof}
Recall that $\EE_x =K[t_i^{\pm 1} \, | \, y_i \le x] \otimes_K K[t_z^{-1} \, | \, z \in P^*_{-x}]$. In this proof, we identify $\Nf(\EE_x)$ with $K[t_i \, | \, y_i \le x]$. The same is true for $E$ and $E'$. Set $B:=K[t_z^{-1}\mid z\in P_{\ge 2}]$. 
In the present situation, we have the canonical graded $K$--isomorphism $\EE_x^{\nf}\cong S_x\otimes_K B$ for $x\in P$, and hence 
$$E^{\nf}\cong \Nf(E)\otimes_K B, \quad (E')^{\nf}\cong \Nf(E')\otimes_K B.$$
By the description of a clean map (see the end of \S 2), we have $\varphi|_{E^{\nf}}=\Nf(\varphi)\otimes \id_B$. 

Set $\varphi(\alpha)=\sum_{i} u_i\otimes v_i$, where $v_i$ are distinct monomials in $B$, and $u_i\in \Nf(E')$. We claim that $u_i\in \Im\Nf(\varphi)$ for any $i$. 
 Suppose, for a contradiction, that $u_j\notin \Im\Nf(\varphi)$ for some $j$. Since $\Im\Nf(\varphi)\in \Sq A_P$ with $\Im\Nf(\varphi)\subset \Nf(E')$, there exists a finite direct sum $J:=\bigoplus_{x\in P} S_x^{r_x}$ and a graded homomorphism $g:\Nf(E')\to J$ such that 
$$0\to \Im\Nf(\varphi)\to \Nf(E')\xrightarrow{g} J$$
is exact, and we have $g(\Im\Nf(\varphi))=0$ and $g(u_j)\neq 0$. 
Now we have the clean map $\C(g): E'\to \C(J)$. 
Since $g\circ \Nf(\varphi)=0$, two clean maps $\C(g)\circ  \varphi$ and $0:E\to \C(J)$ have the same restriction to $\Nf(E)$. Since $\Nf(-)$ gives an equivalence, we have $\C(g)\circ \varphi=0$. In particular, $\C(g)(\varphi(\alpha))=0$.

On the other hand, since  $\varphi|_{E^{\nf}}=\Nf(\varphi)\otimes \id_B$ and  $\C(g)|_{{(E')}^{\nf}}=g\otimes \id_B$, we have $$0=\C(g)(\varphi(\alpha))=\sum g(u_i)\otimes v_i.$$
Hence $g(u_i)=0$ for any $i$. This contradicts $g(u_j)\neq 0$. Therefore, $u_i\in \Im\Nf(\varphi)$ for any $i$.
For any $i$, choose a homogeneous $u'_i\in \Nf(E)$ such that $\Nf(\varphi)(u'_i)=u_i$.
Set $\alpha':=\sum_i u'_i\otimes v_i$. Then $\alpha'\in \Nf(E)\otimes_K B=E^{\nf}$, and 
\[
\varphi(\alpha')=\sum_i \varphi(u'_i)\otimes v_i
=\sum_i u_i\otimes v_i
=\varphi(\alpha).
\]
This proves the assertion.
\end{proof}

\begin{lem}\label{nf-leading-monomial}
Let $E$ and $E'$ be finite direct sums of $\EE_x$'s for various $x \in P$, and $\varphi:E\to E'$ a clean map. 
For any homogeneous  $\alpha\in E$ with $\varphi(\alpha) \ne 0$, 
there exists a homogeneous $\alpha'\in E$ such that $\varphi(\alpha')=\varphi(\alpha)$ and $\lv(\alpha')=\lv(\varphi(\alpha))$. 
\end{lem}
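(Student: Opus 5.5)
We argue by well-founded induction on $\lv(\varphi(\alpha))$, a variable (or $0$) ordered by $\succ$. Two inequalities are worth recording first. Lemma~\ref{clean and lm} gives $\lm(\varphi(\beta))\preceq\lm(\beta)$ whenever $\varphi(\beta)\neq 0$. Since $\lv$ is read off from the $\alpha^-$ part, which dominates in the order $\succ$, this yields $\lv(\varphi(\beta))\preceq\lv(\beta)$ for every $\beta$ with $\varphi(\beta)\ne0$. The task is therefore to modify $\alpha$ within the fiber $\varphi^{-1}(\varphi(\alpha))$ so that the reverse inequality also holds.

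\textbf{Base case.} If $\lv(\varphi(\alpha))=0$, then $\varphi(\alpha)\in\Nf(E')$. The natural route is Lemma~\ref{N-exact-Ker}, applied to the straight setting (view $\varphi$ as a clean map between the straight modules $E,E'$ with trivial presentations). Arguing exactly as in its proof with Lemma~\ref{existbase}, we decompose $\alpha=\alpha_1+\alpha_2$ with $\alpha_1\in\Nf(E)$ and $\alpha_2\in\Lf(E)$. This works when $\deg\alpha\ge\b0$; otherwise $\varphi(\alpha)\in\Nf(E')$ forces $\varphi(\alpha)=0$, which is excluded. Then $\varphi(\alpha_2)\in\Lf(E')\cap\Nf(E')=0$, so $\alpha':=\alpha_1$ has $\lv(\alpha')=0$.

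\textbf{Case $\lv(\varphi(\alpha))=t_i$, one of $t_1,\dots,t_n$.} Every variable with a negative power in $\alpha$ that is larger than $t_i$ must be removed. By the chosen order, all $t_z$ with $z\in P_{\ge2}$ are smaller than every $t_j$. Write $\alpha=\beta+\gamma$, where $\gamma$ collects the monomials whose negative support contains some $t_j\succ t_i$. The goal is to replace $\gamma$ by something with no effect on the image. I would proceed by descending induction on $\lm(\gamma)$, using the explicit formula \eqref{def of varphi} for clean maps. Negative powers of $t_j$ for $y_j\le x'$ are carried along unchanged, while those for $y_j\not\le x'$ are killed ($e_r>0$ gives zero, and otherwise $t_r^{e_r}$ is sent into the $W$-part). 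It follows that the top-$\lm$ terms involving a negative $t_j\succ t_i$ must cancel in $\varphi(\alpha)$. That cancellation is a linear relation that can be realized by subtracting a suitable element of $\Ker\varphi$, which lowers $\lm(\gamma)$.

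\textbf{Case $\lv(\varphi(\alpha))=t_z$ with $z\in P_{\ge2}$.} Here Lemmas~\ref{delta-lower}, \ref{delta-clean} and \ref{nf-change} do the work. First, by the previous step, we reduce to $\alpha\in E^{\nf}$, so that $\varphi(\alpha)\in(E')^{\nf}$. Then we induct on the variables $t_w\succ t_z$ appearing negatively in $\alpha$. Suppose $\lv(\alpha)=t_w\succ t_z$. Since $\varphi(\alpha)$ has no negative $t_w$, Lemma~\ref{delta-lower}(1) gives $\delta_w^{E'}(\varphi(\alpha))=0$, and Lemma~\ref{delta-clean} gives $\varphi(\delta_w^E(\alpha))=0$. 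Now split $\alpha=\alpha_0+\alpha_1t_w^{-1}$, where $\alpha_0$ has no negative $t_w$ and $\alpha_1=\delta_w^E(\alpha)$ has strictly smaller $\lm$ (Lemma~\ref{delta-lower}(2),(3)). Then $\varphi(\alpha_1t_w^{-1})=\varphi(\alpha_1)t_w^{-1}=0$, so $\alpha$ may be replaced by $\alpha_0$. Iterating over the finitely many variables larger than $t_z$ terminates because $\succ$ is a well-order, and gives $\lv(\alpha')\preceq t_z$. Combined with the opening inequality, this yields $\lv(\alpha')=t_z$.

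\textbf{Main obstacle.} The step I expect to be hardest is the second case, where negative powers of the $t_i$ must be handled. The $\delta$-operators are unavailable there, and one has to argue directly from the clean-map formula that the cancellation can be lifted to a kernel element. I would first try to reduce it to Lemma~\ref{nf-change} by multiplying by a high power of $t_j$ to clear the negative $t_j$-powers.
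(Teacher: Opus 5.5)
\textbf{Gap: the case $\lv(\varphi(\alpha))=t_i$.} This case is the heart of the lemma, and your argument there is not a proof. It rests on the claim that the cancellation of top terms ``is a linear relation that can be realized by subtracting a suitable element of $\Ker\varphi$'', and nothing you establish yields this. The vanishing of the coefficient of $\lm(\gamma)$ in $\varphi(\gamma)$ only says that a scalar combination of components lies in the kernel of the leading-term part of $\varphi$. To subtract it off you need some $\kappa\in\Ker\varphi$ whose leading part is exactly that combination. That is a strictness (lifting) property of $\varphi$ with respect to the $\lm$-filtration, and it is essentially what the lemma asserts in this case.

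Your supporting description of \eqref{def of varphi} is also inaccurate. A negative power $t_r^{a_r}$ with $y_r\not\le x'$ is not killed: the $\bd=\b0$ term carries it into the second tensor factor of $\EE_{x'}$ (here $y_r\in P^*_{-x'}$ and $y_r\notin W$), where it still counts in $\nv$. The terms with $\bd\ne\b0$ raise the $t_i$-exponents rather than carrying them unchanged. The correct fact is that the $\bd=\b0$ term either vanishes or reproduces the same element $uv\in T$, while all other terms are strictly $\prec$. Your fallback of multiplying by powers of $t_j$ points the right way, but you never say how to get back, and that is where the work lies.

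\textbf{The paper's route.} It inducts on $\lm(\varphi(\alpha))$, not on $\lv(\varphi(\alpha))$; this matters because after multiplying by $t_m$ the $\lv$ may still be $t_m$.
\begin{itemize}
\item If $\lv(\varphi(\alpha))=t_m$, then $\varphi(t_m\alpha)=t_m\varphi(\alpha)\neq0$ has strictly smaller $\lm$. Induction gives $\beta$ with $\varphi(\beta)=\varphi(t_m\alpha)$ and $\lv(\beta)\preceq t_m$.
\item Define $\beta\cdot t_m^{-1}$ as in \eqref{t_m^-1 for a monomial}. Then $t_m(\beta\cdot t_m^{-1})=\beta$ and $\lv(\beta\cdot t_m^{-1})\preceq t_m$.
\item You never need $\cdot\, t_m^{-1}$ to commute with $\varphi$. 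The error $\varphi(\alpha-\beta\cdot t_m^{-1})$ is annihilated by $t_m$, so it has no negative $t_m$. By Lemma~\ref{clean and lm} it has no larger negative variable either. Hence its $\lm$ is smaller, and a second use of the induction hypothesis gives $\gamma$ with $\lv(\gamma)\prec t_m$.
\item Then $\alpha'=\beta\cdot t_m^{-1}+\gamma$ works: $\lv(\alpha')\preceq t_m$, and your opening inequality gives $\lv(\alpha')\succeq t_m$.
\end{itemize}
The paper treats $z\in P_{\ge2}$ by the same scheme with $\delta_z$ in place of $t_m$.

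\textbf{Your $P_{\ge2}$ case.} It can be kept, with one repair. The step $\varphi(\alpha_1t_w^{-1})=\varphi(\alpha_1)t_w^{-1}$ does not follow from Lemmas~\ref{delta-lower} and \ref{delta-clean}, which determine the left side only modulo $\Ker\delta_w^{E'}$. It needs the identity $\varphi|_{E^{\nf}}=\Nf(\varphi)\otimes\id_B$, with $B=K[t_z^{-1}\mid z\in P_{\ge2}]$, from the proof of Lemma~\ref{nf-change}. Once you use that, the case is immediate: write $\alpha=\sum_k u_k\otimes v_k$ with distinct monomials $v_k\in B$, and drop the terms with $\Nf(\varphi)(u_k)=0$. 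Also, the reduction to $E^{\nf}$ should cite Lemma~\ref{nf-change} directly, not your unproved $t_i$ step.
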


\begin{proof}
We prove the assertion by induction on $\lm(\varphi(\alpha))$ with respect to $\succ$.

\medskip
\noindent
\textbf{Case 1:}
First, we consider the case $\varphi(\alpha) \in (E')^{\nf}$. 
If $\varphi(\alpha)\in\Nf(E')$ (equivalently, $\lv(\varphi(\alpha))=0$), then we have $\deg \alpha \in \NN^n$ and  $\alpha$ can be decomposed as $\alpha=\alpha_1+\alpha_2$ for 
$\alpha_1 \in \Nf(E)$ and $\alpha_2 \in \Lf(E)$. 
Since $\varphi$ is clean, we have $\varphi(\alpha_1) \in \Nf(E')$ and $\varphi(\alpha_2) \in \Lf(E')$. Hence  $\varphi(\alpha)\in\Nf(E')$ implies $\varphi(\alpha_2)=0$ and $\varphi(\alpha)=\varphi(\alpha_1)$. So it suffices to take $\alpha'=\alpha_1$. 

So we may assume that $\varphi(\alpha) \in (E')^{\nf} \setminus \Nf(E')$. Hence we have $t_z = \lv(\varphi(\alpha))$ for some $z \in P_{\ge 2}$. By Lemma~\ref{nf-change}, we may assume that $\alpha \in E^{\nf}$. 
We have $\lm(\varphi(\delta_z^E(\alpha)))=\lm(\delta_z^{E'}(\varphi(\alpha)))\prec\lm(\varphi(\alpha))$ by Lemmas~\ref{delta-clean} and \ref{delta-lower}.
By the induction hypothesis, there exists $\beta\in E^{\nf}$ such that $\varphi(\beta)=\varphi(\delta_z^E (\alpha))=\delta_z^{E'}(\varphi(\alpha))$ and  $\lv(\beta) = \lv(\varphi(\delta_z^E(\alpha))) \preceq t_z$. 
If it can be written as $\beta=\sum_{i=1}^l r_i \cdot (u_i \otimes v_i)$, set
$$\beta\cdot t_z^{-1}:=\sum_{i=1}^l r_i \cdot (u_i\otimes v_it_z^{-1}).$$  Remark that $\delta_z^E(\beta\cdot t_z^{-1})=\beta$, $\lv(\beta\cdot t_z^{-1})=t_z$, and 
\begin{eqnarray*}
\delta_z^{E'}(\varphi(\beta\cdot t_z^{-1}))=\varphi(\delta_z^E(\beta\cdot t_z^{-1}))=\varphi(\beta)=\delta_z^{E'}\varphi(\alpha).
\end{eqnarray*}
Therefore,  $\delta_z^{E'}(\varphi(\alpha-\beta\cdot t_z^{-1}))=0$, $t_z$ does not appear in $\varphi(\alpha-\beta\cdot t_z^{-1})$. 
 If $\varphi(\alpha-\beta\cdot t_z^{-1})=0$, equivalently 
$\varphi(\alpha)=\varphi(\beta\cdot t_z^{-1})$, then $\beta\cdot t_z^{-1}$ is  a desired element.   
So we may assume that $\varphi(\alpha-\beta\cdot t_z^{-1}) \ne 0$.  
In this case, $\alpha-\beta\cdot t_z^{-1} \in E^{\nf}$, 
$\lv(\varphi(\alpha-\beta\cdot t_z^{-1})) \prec t_z$ and $\lm(\varphi(\alpha-\beta\cdot t_z^{-1})) \prec \lm(\varphi(\alpha))$. By induction hypothesis, there is $\gamma \in E^{\nf}$ such that $\varphi(\alpha-\beta\cdot t_z^{-1})=\varphi(\gamma)$ and $\lv(\gamma) = \lv(\varphi(\alpha-\beta\cdot t_z^{-1})) \prec t_z$. Hence  $\beta\cdot t_z^{-1}+\gamma$ is a desired element. 

\medskip
\noindent
\textbf{Case 2:} Next, we consider the case $\varphi(\alpha) \not \in (E')^{\nf}$. 
Then $t_m = \lv(\varphi(\alpha))$ for some $1\leq m\leq n$. Note that $t_m\varphi(\alpha) \ne 0$. 
Since $\lm(t_m\varphi(\alpha))\prec\lm(\varphi(\alpha))$, there exists $\beta\in E$ such that $\varphi(t_m\alpha)=\varphi(\beta)$ and $\lv(\beta)\preceq\lv(\varphi(t_m\alpha)) \preceq t_m$ by the induction hypothesis. 
For a monomial $u\otimes v\in \EE_x$, set
\begin{eqnarray}\label{t_m^-1 for a monomial}
(u\otimes v )\cdot t_m^{-1}:=
\begin{cases}
     (t_m^{-1}u)\otimes v  \; \; \; (\text{if} \;   \; y_m\leq x)\\
     u\otimes (t_m^{-1}v) \; \; \; (\text{if}  \; \; y_m\nleq x)\\
\end{cases}
\end{eqnarray}
and set
\begin{equation}\label{t_m^-1} 
\beta\cdot t_m^{-1}:=\sum_{i=1}^l r_i \cdot (u_i\otimes v_i)\cdot t_m^{-1}
\end{equation}
for $\beta=\sum_{i=1}^l r_i \cdot (u_i \otimes v_i)$. Remark that $t_m(\beta\cdot t_m^{-1})=\beta$ and $\lv(\beta\cdot t_m^{-1})\preceq t_m$. 
If $\varphi(\alpha-\beta\cdot t_m^{-1})=0$, then $\beta\cdot t_m^{-1} \in E$  satisfies the  desired property. So we may assume that $\varphi(\alpha-\beta\cdot t_m^{-1}) \ne 0$, 
but we can find a desired element by an argument similar to that in the last step of Case 1. 
\end{proof}

\begin{lem}\label{delta-straight}
Let $M$ be a straight module with the presentation \eqref{straight presentation}. Let $z\in P_{\geq2}$, and let
$\alpha\in M$ be a homogeneous element such that $d_M(\alpha)\in E_M^{\nf}.$
Then there is a unique element $\delta_z^M(\alpha)\in M$
such that $$d_M(\delta_z^M(\alpha))=\delta_z^{E_M}(d_M(\alpha)).$$
Moreover, if $\varphi:M\longrightarrow N$ is a clean map of straight modules, then $$\delta_z^N(\varphi(\alpha))=\varphi(\delta_z^M(\alpha)).$$
\end{lem}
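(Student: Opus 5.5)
Uniqueness is immediate from the injectivity of $d_M$, so the content is existence. By exactness of $p_M$, it suffices to show that $\beta:=\delta_z^{E_M}(d_M(\alpha))$ lies in $\Ker d'_M$. Set $f:=d_M(\alpha)\in E_M^{\nf}$. Lemma~\ref{delta-clean} applies because $d'_M$ is clean and $f\in E_M^{\nf}$. It gives
$$d'_M(\delta_z^{E_M}(f))=\delta_z^{E'_M}(d'_M(f))=\delta_z^{E'_M}(0)=0,$$
using $d'_M\circ d_M=0$ and the $K$-linearity of $\delta_z^{E'_M}$. Hence $\beta=d_M(\gamma)$ for a unique $\gamma\in M$, and we set $\delta_z^M(\alpha):=\gamma$. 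Homogeneity is not an issue, since $\delta_z$ shifts degrees uniformly by $\deg t_z$ on monomials; only set-theoretic existence is needed anyway. I would also note that $\delta_z^{E_M}(f)\in E_M^{\nf}$, because $\delta_z$ only raises the exponent of $t_z$ and does not touch the $t_i$. This will be needed below.

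For the compatibility with a clean map $\varphi:M\to N$, I would use a representing diagram \eqref{hom in Str} with clean maps $\varphi^0:E_M\to E_N$ and $\varphi^1$. By Lemma~\ref{delta-clean} applied to $\varphi^0$ and $f\in E_M^{\nf}$, we get $\varphi^0(f)\in E_N^{\nf}$. Since $d_N(\varphi(\alpha))=\varphi^0(d_M(\alpha))=\varphi^0(f)$, the element $\varphi(\alpha)$ satisfies the hypothesis of the first part, so $\delta_z^N(\varphi(\alpha))$ is defined. Then compute
$$d_N(\varphi(\delta_z^M(\alpha)))=\varphi^0(d_M(\delta_z^M(\alpha)))=\varphi^0(\delta_z^{E_M}(f))=\delta_z^{E_N}(\varphi^0(f))=\delta_z^{E_N}(d_N(\varphi(\alpha))),$$
where the third equality is again \eqref{delta-clean-eq} for $\varphi^0$. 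By the defining property and uniqueness in $N$ (injectivity of $d_N$), this gives $\varphi(\delta_z^M(\alpha))=\delta_z^N(\varphi(\alpha))$.

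The only point requiring care is that $\delta_z^M$ is defined through the fixed presentation $p_M$, while a clean map is an equivalence class of diagrams. However, the final identity only involves $\varphi$, $d_M$ and $d_N$, which are fixed by the presentations, so any representative $\varphi^0$ works. I expect no real obstacle: everything reduces to Lemma~\ref{delta-clean} together with the fact that $E^{\nf}$ is preserved by clean maps.
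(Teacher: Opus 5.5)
Your proposal is correct and follows essentially the same route as the paper's proof. Existence and uniqueness come from Lemma~\ref{delta-clean} applied to the clean map $d'_M$, together with exactness of $p_M$ and injectivity of $d_M$; compatibility comes from the same chain of equalities through the clean map $\varphi^0$, followed by injectivity of $d_N$. You also check explicitly that $d_N(\varphi(\alpha))=\varphi^0(d_M(\alpha))\in E_N^{\nf}$, so that $\delta_z^N(\varphi(\alpha))$ is defined, a detail the paper leaves implicit.
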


\begin{proof}
Since $d'_M$ is clean, Lemma~\ref{delta-clean} gives
$$d'_M\bigl(\delta_z^{E_M}(d_M(\alpha))\bigr)=
 \delta_z^{E'_M}\bigl(d'_Md_M(\alpha)\bigr)=0.$$
By the exactness of the presentation \eqref{straight presentation}, we have 
$\delta_z^{E_M}(d_M(\alpha))\in\Im d_M.$ 
Since $d_M$ is injective, $\delta_z^M(\alpha)$ exists uniquely.

For the second assertion, let $\varphi^0:E_M\longrightarrow E_N$
be the clean map in \eqref{hom in Str}. Then
 $$\begin{aligned}
 d_N(\delta_z^N(\varphi(\alpha)))=
 \delta_z^{E_N}(d_N(\varphi(\alpha)))=\delta_z^{E_N}(\varphi^0(d_M(\alpha)))
&=\varphi^0(\delta_z^{E_M}(d_M(\alpha)))\\
&=\varphi^0(d_M(\delta_z^M(\alpha)))\\
&=d_N\varphi(\delta_z^M(\alpha)).
\end{aligned}$$
The injectivity of $d_N$ gives the assertion.
\end{proof}

\begin{prop}\label{surj-N}
Let $\varphi:M\longrightarrow N$ be a clean map. Assume that
$\Nf(\varphi)$ is surjective. Then $\varphi$ is surjective as a
$\ZZ^n$-graded $S$-homomorphism.
Moreover, for every homogeneous $0 \ne \alpha\in N$, there exists a
homogeneous $\beta\in M$ such that $\varphi(\beta)=\alpha$
and
\begin{equation}\label{LV-surj}
 \lv(d_M(\beta))
 \preceq
 \lv(d_N(\alpha)).
\end{equation}
\end{prop}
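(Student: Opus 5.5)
Let $\varphi^0:E_M\to E_N$ and $\varphi^1:E_M'\to E_N'$ be the clean maps of the diagram \eqref{hom in Str}. We argue by well-founded induction on $\lm(d_N(\alpha))$ with respect to $\succ$, which is a well-order. The second assertion (the lift $\beta$ together with \eqref{LV-surj}) implies surjectivity, so we only prove it. The strategy copies Lemma~\ref{nf-leading-monomial}: lower the leading variable by applying $\delta_z$ or multiplying by $t_m$, lift the result by induction, divide back, and handle the remainder by induction again.

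\textbf{Base case:} $d_N(\alpha)\in \Nf(E_N)$, i.e.\ $\alpha\in\Nf(N)$. Since $\Nf(\varphi)$ is surjective, there is $\beta\in\Nf(M)$ with $\varphi(\beta)=\alpha$. Then $\lv(d_M(\beta))=0$, as required.

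\textbf{Case 1:} $d_N(\alpha)\in E_N^{\nf}\setminus \Nf(E_N)$, with $\lv(d_N(\alpha))=t_z$ for some $z\in P_{\ge2}$.
\begin{itemize}
\item By Lemma~\ref{delta-straight}, $\delta_z^N(\alpha)\in N$ exists. By Lemma~\ref{delta-lower}(2), $\lm(d_N(\delta_z^N\alpha))\prec\lm(d_N(\alpha))$ and its $\lv$ is $\preceq t_z$.
\item Induction gives $\gamma\in M$ with $\varphi(\gamma)=\delta_z^N(\alpha)$ and $\lv(d_M(\gamma))\preceq t_z$.
\item We want to form $\gamma\cdot t_z^{-1}$ inside $M$. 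The element $d_M(\gamma)\cdot t_z^{-1}$ lies in $E_M$, but it need not lie in $d_M(M)$. To fix this, first arrange $d_M(\gamma)\in E_M^{\nf}$, using the argument of Lemma~\ref{nf-change} applied to $d_M$ or to $\varphi^0$.
\item Then test $d_M'(d_M(\gamma)t_z^{-1})$. By Lemma~\ref{delta-clean}, $\delta_z^{E_M'}$ kills it, so it lies in the part of $E_M'$ with no $t_z$ in negative powers. It need not be zero, so we correct $d_M(\gamma)t_z^{-1}$ by an element with smaller leading variable. I would apply Lemma~\ref{nf-leading-monomial} to $d_M'$ to find $\eta\in E_M$ with $d_M'(\eta)=d_M'(d_M(\gamma)t_z^{-1})$ and $\lv(\eta)\prec t_z$ (or $\eta=0$). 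Replace the candidate by $\beta_0:=d_M^{-1}(d_M(\gamma)t_z^{-1}-\eta)\in M$. Then $\delta_z$ of $d_M(\beta_0)$ recovers $d_M(\gamma)$ up to terms that are harmless after applying $\varphi^0$.
\item Next, $d_N(\alpha-\varphi(\beta_0))$ has $\delta_z$ equal to zero, so $t_z$ no longer appears and its $\lm$ is strictly smaller. The remainder is lifted by induction, with $\lv\prec t_z$.
\item Adding the two lifts gives $\beta$ with $\lv(d_M(\beta))\preceq t_z$.
\end{itemize}

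\textbf{Case 2:} $\lv(d_N(\alpha))=t_m$ for some $1\le m\le n$. Multiply by $t_m$: then $\lm(d_N(t_m\alpha))\prec\lm(d_N(\alpha))$, and induction gives $\gamma$ with $\varphi(\gamma)=t_m\alpha$ and $\lv(d_M(\gamma))\preceq t_m$. Form $d_M(\gamma)\cdot t_m^{-1}$ as in \eqref{t_m^-1}, correct it into $\Im d_M$ by the same $d_M'$-correction as in Case 1, and handle the remainder by induction.

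\textbf{Main obstacle.} The key difficulty is the division step: showing that $d_M(\gamma)t^{-1}$ can be modified, without raising $\lv$, into an element of $\Im d_M=\Ker d_M'$ whose $\varphi^0$-image still differs from $d_N(\alpha)$ only by terms of smaller leading monomial. I would attack it by exploiting exactness of $p_M$ together with Lemma~\ref{clean and lm} applied to $\varphi^0$ and $d_M'$, and the non-increase of $\lm$ under clean maps. If this fails, I would instead run the induction directly on the clean map $(\varphi^0,d_M'):E_M\to E_N\oplus E_M'$ via Lemma~\ref{nf-leading-monomial}. The aim would be a preimage of $(d_N(\alpha),0)$ with controlled $\lv$, which then lies in $\Im d_M$ automatically.
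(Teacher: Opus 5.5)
Your plan is essentially the paper's proof. It uses induction on $\lm(d_N(\alpha))$; Case~1 goes through $\delta_z^N$, lifting, division by $t_z$ and the $d'_M$-correction via Lemma~\ref{nf-leading-monomial}, where the paper's Subcases~1.1/1.2 are your $\eta=0$/$\eta\neq 0$; this correction already resolves the ``main obstacle'' you describe, and Case~2 works the same way with multiplication by $t_m$. Two minor remarks: $d_M(\gamma)\in E_M^{\nf}$ is automatic from $\lv(d_M(\gamma))\preceq t_z$, so Lemma~\ref{nf-change} is not needed there; and in Case~2, $t_m\eta$ need not vanish, so you should bound $\lv$ of $d_N(\alpha)-\varphi^0(d_M(\gamma)\cdot t_m^{-1})$ and of $\varphi^0(\eta)$ separately, as the paper does in Subcase~1.2.
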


\begin{proof}
Clearly, we may assume that $\alpha \ne 0$.
We prove the  assertion  by
induction on $\lm(d_N(\alpha))$ using the notation in \eqref{hom in Str}. If $\alpha\in\Nf(N)$, the assertion follows from the surjectivity of
$\Nf(\varphi)$. Indeed, there is $\beta\in\Nf(M)$ such that
$\varphi(\beta)=\alpha$, and
$\lv(d_M(\beta))=0=\lv(d_N(\alpha)).$

So we may assume that $\alpha\notin\Nf(N)$.

\medskip
\noindent
\textbf{Case 1:} We consider the case $\lv(d_N(\alpha))=t_z$ for some $z\in P_{\geq2}$.
Since $d_N(\alpha)\in E_N^{\nf}$ now,
Lemma~\ref{delta-straight} gives $\delta_z^N(\alpha)\in N$.
By Lemmas~\ref{delta-lower} and \ref{delta-straight},
\[
 \lm(d_N(\delta_z^N(\alpha)))
 =
 \lm(\delta_z^{E_N}(d_N(\alpha)))
 \prec
 \lm(d_N(\alpha)).
\]
Hence the induction hypothesis gives a homogeneous $\beta\in M$ such
that $\varphi(\beta)=\delta_z^N(\alpha)$
and $\lv(d_M(\beta)) \preceq\lv(d_N(\delta_z^N(\alpha))) \preceq t_z.$
Set
$$d_M(\beta)\cdot t_z^{-1}:=\sum r_i\cdot (u_i\otimes v_i t_z^{-1})$$
for $d_M(\beta)=\sum r_i\cdot (u_i\otimes v_i)$.
Remark that $\delta_z^{E_M}(d_M(\beta)\cdot t_z^{-1})=d_M(\beta)$ by Lemma \ref{delta-lower} and $\lv(d_M(\beta)\cdot t_z^{-1})=t_z$.

\textbf{Subcase 1.1:} Assume that $d'_M(d_M(\beta)\cdot t_z^{-1})=0$. Then there exists $\gamma\in M$ with $d_M(\gamma)=d_M(\beta)\cdot t_z^{-1}$.
Using Lemma~\ref{delta-clean}, we obtain
\[
 \begin{aligned}
 \delta_z^{E_N}(d_N(\alpha-\varphi(\gamma)))
 &=
 \delta_z^{E_N}
 (d_N(\alpha)-\varphi^0(d_M(\gamma)))\\
 &=
  \delta_z^{E_N}
 (d_N(\alpha)-\varphi^0(d_M(\beta)\cdot t_z^{-1}))\\
  &=
 d_N(\delta_z^N(\alpha))-\varphi^0(\delta_z^{E_M}(d_M(\beta)\cdot t_z^{-1}))\\
 &=
 d_N(\delta_z^N(\alpha))-\varphi^0(d_M(\beta))\\
 &=
 d_N(\delta_z^N(\alpha)-\varphi(\beta))\\
 &=d_N(0)\\
 &=0.
 \end{aligned}
\]
Therefore, $t_z$ does not appear in $d_N(\alpha-\varphi(\gamma))$. 
If $t_x$ is a variable with $t_x \succ t_z$, then it does not appear in $\varphi^0(d_M(\gamma))=d_N(\varphi(\gamma))$ with negative powers by the cleanness of $\varphi^0$ and Lemma \ref{clean and lm}.
Furthermore, $d_N(\alpha)$ has no negative
variables larger than $t_z$. Hence $\lv(d_N(\alpha-\varphi(\gamma)))\prec t_z$.
If $\alpha-\varphi(\gamma)=0$, $\gamma \in M$ is a  desired element (since $d_M(\gamma)=d_M(\beta)\cdot t_z^{-1}$, we have $\lv(d_M(\gamma))=t_z$).
If $\alpha-\varphi(\gamma)\neq0$, then
\[
 \lm(d_N(\alpha-\varphi(\gamma)))
 \prec
 \lm(d_N(\alpha)).
\]
By the induction hypothesis, there exists $\gamma'\in M$ such that $\alpha-\varphi(\gamma)=\varphi(\gamma')$
and
$\lv(d_M(\gamma'))
 \preceq
 \lv(d_N(\alpha-\varphi(\gamma)))
 \prec t_z.
$
Thus $\alpha=\varphi(\gamma+\gamma')\in \Im\varphi$. Since $\lv(d_N(\alpha))=t_z=\lv(d_M(\gamma))=\lv(d_M(\gamma+\gamma'))$, $\gamma+\gamma' \in M$ is a desired element.

\textbf{Subcase 1.2:} Assume that $d'_M(d_M(\beta)\cdot t_z^{-1})\neq 0$. Note that $t_z\succeq \lv(d_M'(d_M(\beta)\cdot t_z^{-1}))$ by the cleanness of $d'_M$ and Lemma~\ref{clean and lm}. By Lemma \ref{nf-leading-monomial}, there exists $\gamma\in E_M$ such that 
\begin{equation}\label{beta and gamma_2}
d'_M(d_M(\beta)\cdot t_z^{-1})=d'_M(\gamma)
\end{equation}
and $\lv(\gamma)=\lv(d'_M(d_M(\beta)\cdot t_z^{-1}))$.
Since $$\delta_z^{E_M'}(d'_M(d_M(\beta)\cdot t_z^{-1}))=d'_M(d_M(\beta))=0,$$ $t_z$ does not appear in $d'_M(d_M(\beta)\cdot t_z^{-1})$. 
Hence 
\begin{equation}\label{lv(gamma)}
\lv(\gamma)=\lv(d'_M(d_M(\beta)\cdot t_z^{-1})) \prec t_z.
\end{equation}
If $\varphi^0(\gamma) \ne 0$, then $\lv(\varphi^0(\gamma)) \prec t_z$ by Lemma~\ref{clean and lm}. 
On the other hand, 
\begin{align*}
\delta_z^{E_N}(d_N(\alpha)-\varphi^0(d_M(\beta)\cdot t_z^{-1}))&= d_N(\delta_z^N(\alpha))-\varphi^0(d_M(\beta))\\
&= d_N(\varphi(\beta))-d_N(\varphi(\beta))\\
&=0,  
\end{align*}
and $t_z \not \in \nv(d_N(\alpha)-\varphi^0(d_M(\beta)\cdot t_z^{-1}))$. 
If $t_x \succ t_z$, then $t_x \not \in \nv(d_N(\alpha)-\varphi^0(d_M(\beta)\cdot t_z^{-1}))$ by Lemma~\ref{clean and lm}. Hence $\lv(d_N(\alpha)-\varphi^0(d_M(\beta)\cdot t_z^{-1})) \prec t_z$. 

Note that $d'_M(d_M(\beta)\cdot t_z^{-1}-\gamma)=0$ by \eqref{beta and gamma_2}, and we take $\gamma'\in M$ such that $d_M(\gamma')=d_M(\beta)\cdot t_z^{-1}-\gamma$. 
Remark that 
\begin{equation}\label{dM(delta)}
\lv(d_M(\gamma'))=\lv(d_M(\beta)\cdot t_z^{-1})=t_z 
\end{equation}
by \eqref{lv(gamma)}. If $\alpha=\varphi(\gamma')$, then $\gamma' \in M$ is a desired element. So we may assume that $\alpha \ne  \varphi(\gamma')$. Then 
\begin{eqnarray*}
d_N(\alpha-\varphi(\gamma'))&=&d_N(\alpha)-d_N(\varphi(\gamma'))\\
&=& d_N(\alpha)-\varphi^0(d_M(\gamma'))\\
&=&d_N(\alpha)-\varphi^0(d_M(\beta)\cdot t_z^{-1}-\gamma)\\
&=&d_N(\alpha)-\varphi^0(d_M(\beta)\cdot t_z^{-1})+\varphi^0(\gamma). 
\end{eqnarray*}
We have $\lv(d_N(\alpha-\varphi(\gamma')))\prec t_z=\lv(d_N(\alpha))$, since a variable $t_x$ with $t_x \succeq t_z$ does not appear in $d_N(\alpha)-\varphi^0(d_M(\beta)\cdot t_z^{-1})$ and $\varphi^0(\gamma)$ with negative powers. 
By the induction hypothesis, $\alpha-\varphi(\gamma')=\varphi(\gamma'')$ and $\lv(d_M(\gamma''))\preceq\lv(d_N(\alpha-\varphi(\gamma'))) \, (\prec t_z)$ hold for some $\gamma''\in M$. 
Therefore, $\alpha=\varphi(\gamma'+\gamma'') \in \Im \varphi$,  
and $t_z=\lv(d_N(\alpha))=\lv(d_M(\gamma'+\gamma''))$ 
by \eqref{dM(delta)}.
So we are done. 

\textbf{Case 2:}
Next we consider the case where $t_m =\lv(d_N(\alpha))$ for some $1 \le m \le n$. 
Since $d_N(t_m\alpha) = t_m \cdot d_N(\alpha)\ne 0$ and 
$\lm(d_N(\alpha)) \succ\lm(d_N(t_m\alpha))$, there exists $\beta\in M$ with $t_m\alpha=\varphi(\beta)$ and $$(t_m \succeq) \, \lv(d_N(t_m\alpha)) \succeq \lv(d_M(\beta))$$ by the induction hypothesis. 
In the same way as \eqref{t_m^-1 for a monomial} and 
\eqref{t_m^-1}, we take 
$d_M(\beta)\cdot t_m^{-1} \in E_M$. Note that $t_m(d_M(\beta)\cdot t_m^{-1})=d_M(\beta)$, and $t_m\succeq \lv(d_M(\beta)\cdot t_m^{-1})$. 

\textbf{Subcase 2.1:}
Assume that $d'_M(d_M(\beta)\cdot t_m^{-1})=0$. 
This subcase follows similarly to Subcase 1.1, with the difference that some of the equalities are replaced by $\succeq$. 
We take $\delta\in M$ such that $d_M(\delta)=d_M(\beta)\cdot t_m^{-1}$. Note that 
$\lv(d_M(\delta))\preceq t_m$. 
By the same argument as in Subcase 1.1, we can show that 
$t_m\cdot d_N(\alpha-\varphi(\delta))=0$.  
Therefore, $t_m \not \in \nv(d_N(\alpha-\varphi(\delta)))$. If $\alpha=\varphi(\delta)$, then $\delta \in M$ is a  desired element. So we may assume that $\alpha\ne\varphi(\delta)$. 
For $l <m$, $t_l$ does not appear in $d_N(\varphi(\delta))=\varphi^0(d_M(\delta))=\varphi^0(d_M(\beta)\cdot t_m^{-1})$  with negative powers.  
So we have $\lm(d_N(\alpha-\varphi(\delta)))\prec \lm(d_N(\alpha))$, and there exists $\delta'\in M$ such that $\alpha-\varphi(\delta)=\varphi(\delta')$ and $t_m \succ\, \lv(d_N(\alpha-\varphi(\delta)))\succeq\lv(d_M(\delta'))$ by the induction hypothesis. Since $d_M(\delta)=d_M(\beta)\cdot t_m^{-1}$, $\alpha=\varphi(\delta+\delta')\in \Im\varphi$ with $t_m=\lv(d_N(\alpha))\succeq \lv(d_M(\delta+\delta'))$. So we are done.

\textbf{Subcase 2.2:}
Assume that $d'_M(d_M(\beta)\cdot t_m^{-1})\neq 0$. 
 The proof of this subcase is analogous to that of Subcase 1.2,  while some of the equalities are replaced by $\succeq$, and $t_m$ is multiplied instead of applying the operations $\delta_z^*$, as in Subcase 2.1.  
\end{proof}

\begin{cor}\label{surj-epi}
For $\varphi\in \Hom_{\Str_P S}(M,N)$, the following
conditions are equivalent.
\begin{enumerate}
\item $\varphi$ is an epimorphism in $\Str_P S$.
\item $\varphi$ is surjective as a homomorphism in $\*Mod S$.
\end{enumerate}
\end{cor}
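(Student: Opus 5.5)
The plan is to pass through the equivalence $\Nf:\Str_P S\simeq \Sq A_P$ of Theorem~\ref{sq-str}. Since $\Nf$ is an equivalence of abelian categories, $\varphi$ is an epimorphism in $\Str_P S$ if and only if $\Nf(\varphi)$ is an epimorphism in $\Sq A_P$. It remains to see that the latter means $\Nf(\varphi)$ is surjective as an $S$-homomorphism. This holds because $\Sq A_P$ is a full abelian subcategory of $\*Mod S$ closed under cokernels: the cokernel of a morphism of squarefree modules, computed in $\*Mod S$, is again squarefree, since $\Sq A_P$ is closed under cokernels inside $\gModN A_P$, as used in the proof that $D^b(\Sq A_P)$ embeds. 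Hence an epimorphism in $\Sq A_P$ has zero cokernel in $\*Mod S$, i.e.\ it is surjective. The converse is trivial.

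For (1)$\Rightarrow$(2), if $\varphi$ is an epimorphism in $\Str_P S$, then $\Nf(\varphi)$ is surjective by the previous paragraph. Proposition~\ref{surj-N} then says directly that $\varphi$ is surjective as a $\ZZ^n$-graded $S$-homomorphism. All the real work, namely the induction on leading monomials, is already contained in Proposition~\ref{surj-N}, so this direction is immediate.

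For (2)$\Rightarrow$(1), suppose $\varphi$ is surjective in $\*Mod S$. I will show $\Nf(\varphi)$ is surjective, so that it is an epimorphism in $\Sq A_P$ and hence $\varphi$ is an epimorphism in $\Str_P S$. By Lemma~\ref{N-exact-Ker}, $\Nf(\Im\varphi)=\Im \Nf(\varphi)$. Since $\Im\varphi=N$, this gives $\Im\Nf(\varphi)=\Nf(N)$, so $\Nf(\varphi)$ is surjective.

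Alternatively, one can argue directly: if $\psi\circ\varphi=0$ in $\Str_P S$, then this holds as maps of modules, because the forgetful functor is faithful and composition is the usual one. Surjectivity of $\varphi$ then forces $\psi=0$. This bypasses Lemma~\ref{N-exact-Ker} entirely.

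The only point needing care is the claim in the first paragraph that epimorphisms in $\Sq A_P$ are surjective. If the closure of $\Sq A_P$ under cokernels in $\*Mod S$ is in doubt, I would fall back on Lemma~\ref{simple in sq}. If $\Nf(\varphi)$ were not surjective, its image $I$ is squarefree as the kernel of a map between squarefree modules, and $\Nf(N)/I$ is a nonzero $\MM$-graded quotient of a squarefree module. Using the composition-series description, I would expect $\Nf(N)/I$ to be squarefree as well, and then the projection $\Nf(N)\to\Nf(N)/I$ is a nonzero morphism killing $\Nf(\varphi)$, which contradicts $\Nf(\varphi)$ being an epimorphism. I expect this verification to be the main, though mild, obstacle; the substantive content sits in Proposition~\ref{surj-N}.
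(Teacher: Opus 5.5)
Your proposal is correct, and for (1)$\Rightarrow$(2) it is exactly the paper's argument: $\Nf(\varphi)$ is an epimorphism in $\Sq A_P$, hence surjective (the paper uses the closure of $\Sq A_P$ under cokernels tacitly at this step), and Proposition~\ref{surj-N} then gives surjectivity of $\varphi$. For (2)$\Rightarrow$(1) the paper uses precisely your ``alternative'' faithfulness argument, so your other route through Lemma~\ref{N-exact-Ker} (which is also valid) is not needed, and neither is your unproved fallback via Lemma~\ref{simple in sq}.
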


\begin{proof}
Assume first that $\varphi$ is an epimorphism in $\Str_P S$.
Since $\Nf:\Str_P S\longrightarrow\Sq A_P$ is an equivalence, $\Nf(\varphi)$ is an epimorphism in $\Sq A_P$.
Hence $\Nf(\varphi):\Nf(M)\longrightarrow\Nf(N)$
is surjective in $\*Mod S$. Proposition~\ref{surj-N} implies that $\varphi$ is
surjective in ${}^*\!\Mod S$.

Conversely, suppose that $\varphi$ is surjective in $\*Mod S$. If
$f,g:N\longrightarrow L$ are morphisms in $\Str_P S$ satisfying
$f\circ\varphi=g\circ\varphi$, then the equality holds as ordinary $S$-module homomorphisms. Since $\varphi$
is surjective, $f=g$. Thus $\varphi$ is an epimorphism in $\Str_P S$.
\end{proof}

\begin{thm}\label{Coker is straight}
For a morphism $\varphi: M \to N$ in $\Str_P S$, we have the following. 
\begin{itemize}
    \item[(1)] Let $\Im\varphi$ (resp. $I_\varphi$) be the image of $\varphi$ in $\*Mod S$ (resp. $\Str_P S$). Then $\Im \varphi = I_\varphi$ as submodules of $N$. In particular, $\Im\varphi$ is straight.
    \item[(2)] Let $C_\varphi$ be the cokernel of $\varphi$ in $\Str_P S$. 
    Then $\Coker \varphi \cong C_\varphi$ in $\*Mod S$. In particular, $\Coker\varphi$ is straight.
\end{itemize}
\end{thm}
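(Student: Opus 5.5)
For (1), I would use the standard factorization in the abelian category $\Str_P S$: $\varphi = \iota\circ\epsilon$ with $\epsilon: M\to I_\varphi$ an epimorphism and $\iota: I_\varphi\to N$ a monomorphism in $\Str_P S$. By Corollary~\ref{inj-mono}, $\iota$ is injective as a map of $\ZZ^n$-graded $S$-modules. By Corollary~\ref{surj-epi}, $\epsilon$ is surjective in $\*Mod S$. Since the forgetful functor is faithful and composition is the same in both categories, $\varphi=\iota\circ\epsilon$ also holds in $\*Mod S$. Therefore $\Im\varphi=\iota(\epsilon(M))=\iota(I_\varphi)$, which identifies $I_\varphi$ with the submodule $\Im\varphi\subset N$. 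In particular, $\Im\varphi$ carries the straight presentation of $I_\varphi$ transported through $\iota$, so it is straight.

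For (2), let $c: N\to C_\varphi$ be the cokernel morphism in $\Str_P S$. It is an epimorphism in $\Str_P S$, so Corollary~\ref{surj-epi} shows that $c$ is surjective in $\*Mod S$. It remains to compute $\Ker c$ in $\*Mod S$. By Theorem~\ref{Ker is straight}, $\Ker c$ equals the kernel $K_c$ of $c$ computed in $\Str_P S$, as submodules of $N$. In the abelian category $\Str_P S$, the kernel of the cokernel of $\varphi$ is the image $I_\varphi$ of $\varphi$. By (1), this is $\Im\varphi$ as a submodule of $N$.

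Combining these, $\Ker c=\Im\varphi$, and $c$ induces an isomorphism $\Coker\varphi = N/\Im\varphi \cong C_\varphi$ in $\*Mod S$. Hence $\Coker\varphi$ is straight, with the presentation transported from $C_\varphi$.

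The only delicate point is making sure that the submodule identifications are genuinely equalities inside $N$ and not merely abstract isomorphisms. Concretely, $K_c$ and $I_\varphi$ are both subobjects of $N$ in $\Str_P S$, and both are realized as actual submodules via injective clean maps by Corollary~\ref{inj-mono}. Since they are isomorphic as subobjects over $N$, the uniqueness of subobjects gives equal images in $N$. Everything else follows from the earlier corollaries, since the real work was already done in Theorem~\ref{Ker is straight} and Proposition~\ref{surj-N}.
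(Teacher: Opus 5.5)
Your proposal is correct and follows essentially the paper's argument: (1) is the same epi--mono factorization of $\varphi$ in $\Str_P S$, made concrete in $\ZZ^n$-graded $S$-modules via Corollaries~\ref{inj-mono} and \ref{surj-epi}. In (2), you justify the exactness of $0\to I_\varphi\to N\to C_\varphi\to 0$ at $N$ explicitly through Theorem~\ref{Ker is straight} and the abelian-category identity that the image is the kernel of the cokernel, and you then read off $N/\Im\varphi\cong C_\varphi$ directly, whereas the paper cites Theorem~\ref{sq-str} for that step and finishes with the five lemma.
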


\begin{proof}
(1)  Since $\Nf:\Str_P S \to \Sq A_P$ gives a categorical equivalence 
by Theorem \ref{sq-str}, $\Nf(I_\varphi)$ coincides with the image of $N(\varphi): \Nf(M) \to \Nf(N)$. 
Hence there exist homomorphisms $\varphi'\in [\! \gHom_S(\Nf(M), \Nf(I_\varphi))]_\b0$ and  $\iota\in [\! \gHom_S(\Nf(I_\varphi), \Nf(N))]_\b0$ such that $\varphi'$ is surjective, $\iota$ is injective and $\Nf(\varphi)=\iota\cdot \varphi'$. Taking the \v{C}ech hulls, we obtain the graded homomorphisms $\C(\varphi')\in  [\gHom_S(M,I_\varphi)]_\b0$ and $\C(\iota)\in [\gHom_S(I_\varphi, N)]_\b0$ with  $\varphi=\C(\iota)\cdot \C(\varphi')$. Since $\C$ gives a categorical equivalence, $\C(\iota)$ is a monomorphism and $\C(\varphi')$ is an epimorphism in $\Str_P S$. By Corollaries~\ref{inj-mono} and \ref{surj-epi}, they are injective and surjective, respectively, as graded $S$-homomorphisms.

By the universality of $\Im\varphi$ in $\*Mod S$, there exists $\tau\in [\gHom_S(\Im\varphi, I_\varphi)]_\b0$ such that the diagram
\begin{center}
\begin{tikzpicture}
\node (M) at (0,0) {$M$};
\node (N) at (4,0) {$N$};
\node (I) at (2,-1.2) {$\Im\varphi$};
\node (Iv) at (2,-3) {$I_\varphi$};

\draw[->] (M) -- node[above] {$\varphi$} (N);

\draw[->] (M) -- node[right]{}  (I);

\draw[->, thick] (I) -- node[left] {} (N);

\draw[->] (M) -- node[below left] {$\C(\varphi')$} (Iv);

\draw[->] (Iv) -- node[below right] {$\C(\iota)$} (N);

\draw[->, dashed] (I) -- node[right] {$\tau$} (Iv);

\end{tikzpicture}
\end{center}
is commutative, where $\tau$ is injective. On the other hand, $\tau$ is surjective by the following diagram 
\[
  \begin{CD}
     M  @>>>  \Im\varphi @>>>  0  \\
      @VV{ \id_M}V  @VV{\tau}V  @.  \\
    M @>{\C(\varphi')}>>  I_\varphi @>>>  0.
  \end{CD}
\]
Therefore, $\tau$ is an isomorphism.

(2)  Let $(I_\varphi, \iota)$ and 
$(K_\pi, \iota')$ be the image of $\varphi$ and the kernel of $\pi$ in $\Str_P S$, respectively. 
Note that 
$$0 \too I_\varphi \stackrel{\iota}{\too}  N \stackrel{\pi}{\too}  C_\varphi \too  0$$
is exact in $\Str_P S$.  We see that it is also exact in $\*Mod S$. In fact, $\iota$ is injective and $\pi$ is surjective also in $\*Mod S$ by Corollaries~\ref{inj-mono} and \ref{surj-epi}. And the exactness at $N$ follows from Theorem~\ref{sq-str} and (1) of this theorem. 
By the universality of $\Im\varphi$ and $\Coker\varphi$,
we have the following diagram
\[
  \begin{CD}
   0 @>>>\Im \varphi @>>>  N  @>>>  \Coker \varphi @>>>  0  \\
     @.  @VV{\tau}V  @VV{ \id_N}V  @VV{\tau'}V  @.  \\
    0 @>>>I_\varphi @>>> N @>>>  C_\varphi @>>>  0,
  \end{CD}
\]
where $\tau$ is the isomorphism in (1). 
By the five lemma, $\tau'$ is also.  
\end{proof}

\section{Local cohomology modules as straight modules}
Recall that $\omega_S$ denotes the $\ZZ^n$-graded canonical module of $S$, and $\fm=\fp_{\hat{0}} =(t_x \mid x\in P^*)\subset S$ is the graded maximal ideal. 
Since $S_x =K[t_i \mid y_i \le x]$, we have
\begin{equation}\label{local cohomologies of S_x}
H_\fm^i(S_x) =\begin{cases}
K[t_i^{-1} \mid y_i \le x] \, (\deg t_x) & (\text{if $i=\dim  S_x$}), \\
0 & (\text{otherwise}). 
\end{cases}    
\end{equation}

For $M \in \Sq A_P$,  there exists a minimal injective resolution
\begin{equation}\label{injeresol in Sq}
I^\bullet_M: 0\too M \too I^0_M \stackrel{\partial^0}{\too} I_M^1 \stackrel{\partial^1}{\too} \cdots \stackrel{\partial^{t-1}}{\too} I_M^t\too 0,
\end{equation}
$$I_M^i=\bigoplus_{x\in P} S_x^{\nu^i(x, M)}$$
in $\Sq A_P$. Set $\nu^i(\fm, M):= \nu^i(\hat{0}, M)$ for any $i$, and $\nu(x, M):=\nu^0(x, M)$.

\begin{lem}\label{inj-lem}
Let $M$ be a squarefree module, and $I_M$ its injective envelope in $\Sq A_P$.
\begin{itemize}
\item[(1)] $\dim_K [H_\fm^i(I_M)]_{\mathbf{0}}=\begin{cases}
\nu(\fm, M) & (\text{if $i=0$}),\\
0 & (\text{otherwise}). 
\end{cases}$
\item[(2)] $\dim_K [H_\fm^0(M)]_{\mathbf{0}}=\nu(\fm, M)$.
\item[(3)] For the exact sequence $0\to M\to I_M \to N \to 0$, we have
\begin{eqnarray*}
[H_\fm^i(N)]_{\mathbf{0}}\cong 
[H_\fm^{i+1}(M)]_{\mathbf{0}} \; \; \; \mathrm{if}\; \; i\geq 0
\end{eqnarray*}
\end{itemize}
\end{lem}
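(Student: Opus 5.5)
The plan is to prove (1) by direct computation on each summand, then get (2) from left exactness of $H^0_\fm$ together with the essential extension $M \subset I_M$, and (3) from the long exact sequence.

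\emph{(1).} Write $I_M=\bigoplus_{x\in P} S_x^{\nu(x,M)}$. Local cohomology commutes with finite direct sums, so it suffices to compute $[H_\fm^i(S_x)]_\b0$ using \eqref{local cohomologies of S_x}.

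If $x=\hat{0}$, then $S_x=K$. Hence $H_\fm^0(K)=K$ sits in degree $\b0$, and all higher local cohomology vanishes.

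If $x\ne \hat 0$, the only nonzero module is $H_\fm^{\dim S_x}(S_x)=K[t_i^{-1}\mid y_i\le x](\deg t_x)$, with $\dim S_x=\rank x\ge 1$. Its nonzero components sit in degrees $\ba$ with $\ba+\deg t_x\le \b0$ componentwise, restricted to the coordinates $y_i\le x$. So every coordinate $i$ with $y_i\le x$ has $a_i\le -1$. In particular the degree-$\b0$ part vanishes, because $\deg t_x=\sum_{y_i\le x}\be_i$ is nonzero.

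Summing over $x$ gives $\dim_K[H_\fm^0(I_M)]_\b0=\nu(\hat0,M)=\nu(\fm,M)$, and $[H^i_\fm(I_M)]_\b0=0$ for $i>0$.

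\emph{(2).} Use the left exactness of $H_\fm^0$: $H_\fm^0(M)\subset H_\fm^0(I_M)$, so $\dim_K[H_\fm^0(M)]_\b0\le \nu(\fm,M)$ by (1).

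For the reverse inequality, note $[H^0_\fm(I_M)]_\b0$ is exactly the degree-$\b0$ part of the $S_{\hat0}$-summands, i.e.\ $(0:_{I_M}\fm)_\b0$ restricted to those summands. It suffices to show these summands already lie in $M$. Two routes are available:

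\begin{itemize}
\item[(a)] Use that $I_M$ is the injective envelope in $\Sq A_P$, so $M\subset I_M$ is essential among squarefree submodules. The summand $K=S_{\hat0}$ is the simple object $\omega_{S_{\hat 0}}$ of Lemma~\ref{simple in sq}. Essentiality then forces $M$ to contain the whole socle component $K^{\nu(\fm,M)}$ of type $\hat0$, which is a squarefree submodule of $I_M$.
\item[(b)] Use Lemma~\ref{Hom(M,E)}: $\nu(\hat0,M)=\dim_K\Hom(M_{\hat0}\text{-part})$, and since $S_{\hat0}=K$ this gives $\nu(\fm,M)=\dim_K M_{\hat0}$. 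Note that $[H^0_\fm(M)]_\b0$ consists of elements of $M_\b0=M_{\hat 0}$ killed by a power of $\fm$. Since $M$ is $\NN^n$-graded, every element of $M_\b0$ is $\fm$-torsion, which yields the equality.
\end{itemize}

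I will use route (b), since it is cleanest: it reduces to $\nu(\hat0,M)=\dim_K M_\b0$. This follows because the degree-$\b0$ parts of $M$ and $I_M$ agree. Indeed $[I_M]_\b0=K^{\nu(\hat0,M)}$, because $[S_x]_\b0=K$ for every $x$ while the $\MM$-degree $\b0$ only meets $\sigma=\hat0$. Justifying this agreement carefully, for instance via the $\Lambda$-module description or the minimality of the envelope, is the main obstacle.

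\emph{(3).} Take the long exact sequence of $[H^\bullet_\fm(-)]_\b0$ for $0\to M\to I_M\to N\to0$.

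For $i\ge1$, (1) gives $[H_\fm^i(I_M)]_\b0=[H_\fm^{i+1}(I_M)]_\b0=0$, so $[H^i_\fm(N)]_\b0\cong[H^{i+1}_\fm(M)]_\b0$.

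For $i=0$, the sequence reads $0\to [H^0_\fm M]_\b0\to[H^0_\fm I_M]_\b0\to[H^0_\fm N]_\b0\to[H^1_\fm M]_\b0\to0$. By (1) and (2) the first map is an injection between spaces of equal finite dimension, hence an isomorphism. So the second map is zero, and the connecting map gives $[H^0_\fm N]_\b0\cong[H^1_\fm M]_\b0$.
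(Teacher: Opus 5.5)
Your parts (1) and (3) are correct and coincide with the paper's argument. The gap is in (2). Route (b), which you commit to, rests on two false claims.

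First, Lemma~\ref{Hom(M,E)} at $x=\hat{0}$ gives $\dim_K[\gHom_S(M,\EE_{\hat{0}})]_{\mathbf{0}}=\dim_K M_{\mathbf{0}}$. But this counts homomorphisms \emph{out of} $M$; it equals $\dim_K(M/\fm M)_{\mathbf{0}}$, the number of minimal generators in degree $\mathbf{0}$. The Bass number counts homomorphisms \emph{into} $M$: $\nu(\fm,M)=\dim_K[\gHom_S(K,M)]_{\mathbf{0}}=\dim_K(0:_M\fm)_{\mathbf{0}}$, which is the $\hat{0}$-part of the socle of $M$ in $\Sq A_P$. Second, being $\NN^n$-graded does not make $M_{\mathbf{0}}$ $\fm$-torsion.

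The module $M=S_{y_1}=K[t_1]$ refutes both claims at once. It is injective in $\Sq A_P$, so $I_M=M$ and $\nu(\fm,M)=0=\dim_K[H^0_{\fm}(M)]_{\mathbf{0}}$, while $M_{\mathbf{0}}=K$. Your supporting assertion $[I_M]_{\mathbf{0}}=K^{\nu(\hat{0},M)}$ contradicts your own remark that $[S_x]_{\mathbf{0}}=K$ for every $x$; in fact $[I_M]_{\mathbf{0}}\cong K^{\sum_x\nu(x,M)}$. Moreover $M_{\mathbf{0}}\neq[I_M]_{\mathbf{0}}$ in general, e.g.\ for $M=\omega_{S_{y_1}}$ with $I_M=S_{y_1}$. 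So the ``main obstacle'' you flag is not a matter of careful justification: the identity you need is false.

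Route (a), by contrast, is sound, and you should use it. Each $\hat{0}$-summand $S_{\hat{0}}=K=\omega_{S_{\hat{0}}}$ of $I_M$ is a squarefree submodule that is simple in $\Sq A_P$ (Lemma~\ref{simple in sq}). Essentiality of $M\subset I_M$ in $\Sq A_P$ gives that summand a nonzero intersection with $M$, so it lies in $M$. Hence $K^{\nu(\fm,M)}\subset(0:_M\fm)_{\mathbf{0}}\subset[H^0_{\fm}(M)]_{\mathbf{0}}$. Together with your upper bound (left exactness of $H^0_{\fm}$ and (1)), this proves (2).

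This differs slightly from the paper's argument. The paper splits $M\cong M'\oplus K^m$ with $\Gamma_{\fm}(M')=0$, since copies of the injective object $K=S_{\hat{0}}$ split off in $\Sq A_P$. It then reads off $\nu(\fm,M)=m$ from $I_M=I_{M'}\oplus K^m$ and obtains $[H^0_{\fm}(M)]_{\mathbf{0}}\cong K^m$ from this decomposition. Your version needs only essentiality and the simplicity of $K$, and avoids identifying $\Gamma_{\fm}(M)$ explicitly.
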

\begin{proof}
(1) With the notation of \eqref{injeresol in Sq}, we have 
$I_M=I_M^0=\bigoplus_{x\in P} S_x^{\nu(x, M)}$.
Now, the assertion follows from \eqref{local cohomologies of S_x}. 

(2) We have  $M\cong M' \oplus K^m$ for some $m \ge 0$ and  $M' \in \Sq A_P$ with $\Gamma_\fm(M')=0$. Since $S_{\hat{0}}=K$ and $I_M^0$ is the injective envelope of $M$ in $\Sq A_P$, we have $\nu(\fm, M)=m$ and $I_M^0=I_{M'}^0 \oplus K^m$.   
Since $H_\fm^0(M)=H^0(\Gamma_\fm(I_M^\bullet))$ and $\partial^0(K^m)=0$, we are done. 

(3) Consider the long exact sequence
$$
\cdots \to  H_\fm^{i-1}(N)\to H_\fm^i(M) \to H_\fm^i(I_M) \to H_\fm^i(N) 
\to H_\fm^{i+1}(M)\to H_\fm^{i+1}(I_M).$$
If $i>0$, then $[H_\fm^i(I_M)]_\mathbf{0}=0$ by (1), hence $[H_\fm^i(N)]_\mathbf{0}=[H_\fm^{i+1}(M)]_\mathbf{0}$. By (1) and (2), $[H_\fm^0(M)]_\mathbf{0} \to [H_\fm^0(I_M)]_\mathbf{0}$ is an isomorphism, and $[H_\fm^0(N)]_\mathbf{0}=[H_\fm^1(M)]_\mathbf{0}$.
\end{proof}

Recall that $p=\dim S= \#P^*$. 

\begin{prop}\label{bass-sq}
Let $M$ be a squarefree module. Then, we have
$$\nu^i(\fm, M)=\dim_K[H_\fm^i(M)]_\mathbf{0}=\dim_K [\gExt_S^{p-i}(M,\omega_S)]_\mathbf{0}.$$
\end{prop}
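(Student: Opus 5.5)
The first equality will be proved by induction on $i$ via dimension shifting with Lemma~\ref{inj-lem}; the second will follow from $\ZZ^n$-graded local duality together with the fact that $\gExt_S^{p-i}(M,\omega_S)$ is squarefree (Lemma~\ref{Ext is sq}).

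\textbf{Step 1 (first equality).} We induct on $i$, simultaneously for all squarefree modules. The case $i=0$ is Lemma~\ref{inj-lem}(2). For $i\ge 1$, take the exact sequence $0\to M\to I_M\to N\to 0$ with $I_M=I^0_M$ the injective envelope in $\Sq A_P$. Since $\Sq A_P$ is abelian and closed under cokernels of such maps, $N\in\Sq A_P$. By minimality of \eqref{injeresol in Sq}, the complex $I_M^1\to I_M^2\to\cdots$ is a minimal injective resolution of $N$ in $\Sq A_P$. Hence $\nu^{i}(\fm,M)=\nu^{i-1}(\fm,N)$, since these numbers are the multiplicities of $S_{\hat 0}=K$ in the resolution. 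The induction hypothesis gives $\nu^{i-1}(\fm,N)=\dim_K[H^{i-1}_\fm(N)]_\b0$, and Lemma~\ref{inj-lem}(3) gives $[H^{i-1}_\fm(N)]_\b0\cong[H^{i}_\fm(M)]_\b0$. This proves $\nu^i(\fm,M)=\dim_K[H^i_\fm(M)]_\b0$.

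The one point needing care is minimality: $I^\bullet_M$ truncated at degree $1$ must be a minimal resolution of $N=\Coker(M\to I^0_M)$. This holds because $N\hookrightarrow I^1_M$ is an injective envelope in $\Sq A_P$, as it is in any abelian category with enough injectives.

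\textbf{Step 2 (second equality).} By $\ZZ^n$-graded local duality over $S$, $H^i_\fm(M)$ is the graded Matlis dual of $\gExt_S^{p-i}(M,\omega_S)$, so
\[
[H^i_\fm(M)]_{\b0}\cong \Hom_K\bigl([\gExt^{p-i}_S(M,\omega_S)]_{\b0},K\bigr),
\]
using that the dual of a graded module $L$ has degree $\ba$ part $\Hom_K(L_{-\ba},K)$. Here the twist is already absorbed into $\omega_S=S(-\bw)$. The only check is that the degree conventions match exactly: that degree $\b0$ of $H^i_\fm(M)$ corresponds to degree $-\b0=\b0$ of the Ext module with $\omega_S$ normalized as $S(-\bw)$. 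I would verify this on $M=S_x$ using \eqref{local cohomologies of S_x}. There $\gExt^{p-\dim S_x}_S(S_x,\omega_S)\cong\omega_{S_x}=S_x(-\deg t_x)$, and both sides are $K$ in degree $\b0$ exactly when $x=\hat 0$, which is consistent.

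The degree bookkeeping in Step 2 is the only potential obstacle; everything else is formal given Lemma~\ref{inj-lem}.
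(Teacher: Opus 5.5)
Your proposal is correct and essentially matches the paper's proof: the second equality comes from $\ZZ^n$-graded local duality, and the first from dimension shifting along $0\to M\to I_M\to N\to 0$, with Lemma~\ref{inj-lem}(2) as the base case and Lemma~\ref{inj-lem}(3) for the shift. The only difference is cosmetic. You induct on $i$ uniformly over all squarefree modules, whereas the paper inducts on $\injdim_{\Sq A_P} M$; your version avoids the paper's separate injective case and never needs finiteness of the injective dimension.
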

\begin{proof}
The second equality follows from the $\ZZ^n$-graded local duality 
$$H_\fm^i(M)^\vee=\gExt_S^{p-i}(M,\omega_S),$$
where $(-)^\vee$ means the $\ZZ^n$-graded Matlis dual. 
Here we use a somewhat unusual $\ZZ^n$-grading, but the local duality theorem still holds (see \cite[14.4.1]{BS}). So it suffices to prove the first equality. If $i=0$, the first equality has been proved in Lemma \ref{inj-lem} (2). So we may assume that $i>0$. We use induction on $\injdim_{\Sq A_P} M$. If $\injdim_{\Sq A_P}M=0$, the assertion is nothing other than Lemma \ref{inj-lem} (1), since $M\cong I_M$ in this case. Assume that $\injdim_{\Sq A_P} M>0$. Consider $0\to M\to I_M\to N\to 0$. 
Note that $N$ is squarefree again, and $\nu^i(\fm, M)=\nu^{i-1}(\fm, N)$ for all $i\geq 1$. 
Since $\injdim_{\Sq A_P} N=\injdim_{\Sq A_P} M-1$, we have $\nu^{i-1}(\fm, N)=\dim_K[H_\fm^{i-1}(N)]_\mathbf{0}$ by the induction hypothesis. By Lemma \ref{inj-lem} (3), we have $\nu^i(\fm, M)=\dim_K [H_\fm^{i-1}(N)]_\mathbf{0}=\dim_K[H_\fm^i(M)]_\mathbf{0}$ for all $i\geq 1$. 
\end{proof}

\begin{prop}\label{bass-str}
Let $M$ be a straight module and $\EE^\bullet : 0\to M\to \EE^0\to {}^*\! E^1 \to \cdots$ a minimal injective resolution of $M$ in $\*Mod S$.
\begin{itemize}
\item[(1)] The category $\Str_P S$ has enough injectives, and every injective object is isomorphic to $\bigoplus_{x\in P} \EE_x^{m_x}$ for some $m_x\in \NN$. 

\item[(2)] The complex $\EE^\bullet$ is isomorphic to a minimal injective resolution of $M$ in $\Str_P S$. Thus, $\EE^i\cong \bigoplus_{x\in P}  \EE_x^{\mu^i(\fp_x, M)}$, $\mu^i(\fp_x,M)<\infty$, as $\ZZ^n$--graded $S$--modules (the degree shifting does not occur).
\item[(3)] $\Nf({}^*\! E^\bullet)$ is a minimal injective resolution of $\Nf(M)\in \Sq A_P$ in $\Sq A_P$.  Thus $\mu^i(\fp_x, M)=\nu^i(x, \Nf(M))$ for all $x\in P$ and $i\geq 0$.
\end{itemize}
\end{prop}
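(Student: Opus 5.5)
The plan is to transport everything through the equivalence $\Nf:\Str_P S\simeq \Sq A_P$ of Theorem~\ref{sq-str}, using the \v{C}ech hull $\C$, and then to compare with $\*Mod S$ using the exactness results of \S4.

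For (1), I start from the fact that $\Sq A_P$ has enough injectives, the indecomposable ones being the $S_x$ (Proposition~\ref{Sq A_P}). Since $\C$ is an equivalence, the injectives of $\Str_P S$ are the objects $\C(\bigoplus S_x^{m_x})$. With the presentation $0\to\EE_x\stackrel{\id}{\to}\EE_x\to 0$ of Remark~\ref{remark on clenness}, we have $\Nf(\EE_x)=S_x$, so $\C(S_x)\cong \EE_x$. Enough injectives then follows at once from the equivalence.

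For (2) and (3), take a minimal injective resolution $I^\bullet$ of $\Nf(M)$ in $\Sq A_P$ as in \eqref{injeresol in Sq}, with $I^i=\bigoplus_x S_x^{\nu^i(x,\Nf(M))}$, and apply $\C$. This gives a complex $0\to M\to \C(I^0)\to \C(I^1)\to\cdots$ in $\Str_P S$ whose terms are $\bigoplus_x \EE_x^{\nu^i(x,\Nf(M))}$ and whose differentials are clean. The complex is exact in $\Str_P S$. By Theorems~\ref{Ker is straight} and \ref{Coker is straight}, kernels and images computed in $\Str_P S$ coincide with those in $\*Mod S$, so it is also exact in $\*Mod S$. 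Its terms are injective in $\*Mod S$, hence it is an injective resolution of $M$ in $\*Mod S$.

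The remaining and main point is minimality in $\*Mod S$, that is, that each $\C(\Im\partial^{i-1})\subset\C(I^i)$ is a ${}^*$essential extension. Write $B^i:=\Im\partial^{i-1}$; this is straight, and $\Nf(\C(B^i))=B^i$. By Lemma~\ref{ess extension}, $\C(B^i)$ is a ${}^*$essential extension of $B^i$. Minimality of $I^\bullet$ in $\Sq A_P$ means $B^i\subset I^i$ is essential among squarefree submodules. I then need to upgrade this to ${}^*$essentiality in $\C(I^i)$. Let $L\ne 0$ be a graded submodule of $\C(I^i)$. Since $\C(I^i)$ is a ${}^*$essential extension of $I^i$, we get $L\cap I^i\ne0$. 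This intersection is an $\NN^n$-graded submodule of $I^i$, but it need not be squarefree. To handle this I would take $\omega_{S_x}$-type socle elements: any nonzero $\ZZ^n$-graded submodule of $\bigoplus S_x^{m_x}$ contains a nonzero element of some $S_x$ component, which generates a copy of a squarefree submodule, namely $\bigoplus S_x$-cyclic, after multiplying by suitable $t_i$. I would then reduce to the indecomposable case using Lemma~\ref{simple in sq}. Alternatively, I can compare Bass numbers: in $\*Mod S$ one has $\mu^i(\fp_x,M)=\dim$ of the degree-$\b0$ part of $\gHom(S_x,\cdot)$ localized, and these can be computed from $\Nf$ applied to the resolution. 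This identifies $\mu^i(\fp_x,M)=\nu^i(x,\Nf(M))$ directly and forces the constructed resolution to be minimal, since any injective resolution contains a minimal one as a direct summand with the same or smaller multiplicities. Given minimality, uniqueness of minimal resolutions yields (2), including the absence of degree shifts, and applying $\Nf$ to $\C(I^\bullet)$ returns $I^\bullet$, which yields (3).
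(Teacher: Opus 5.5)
Your proof of (1), your construction of $\C(I^\bullet)$, the exactness of $\C(I^\bullet)$ in $\*Mod S$ via Theorems~\ref{Ker is straight} and \ref{Coker is straight}, and your deduction of (2) and (3) once minimality is known all agree with the paper. The gap is the step you yourself flag as the main point: you never actually prove that $\C(I^\bullet)$ is minimal in $\*Mod S$.

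Your first route looks for a nonzero squarefree submodule inside $L\cap I^i$, and this cannot work in general. For $L=S\,t_1^2 1_{y_1}\subset\EE_{y_1}$ you get $L\cap S_{y_1}=(t_1^2)\subset K[t_1]$. This contains no nonzero squarefree submodule, and multiplying by further $t_i$ only moves you further from squarefree degrees. Your intermediate claim is also false: a nonzero graded submodule of $\bigoplus_x S_x^{m_x}$ need not contain a nonzero element of a single summand $S_x$. The diagonal $S\cdot(1_x,1_x)\subset S_x\oplus S_x$ is a counterexample. A direct essentiality argument would have to do three things: descend from an element of $L\cap I^i$ to squarefree degree (which leaves $L$), meet $B^i$ there, and push back up using bijectivity of the multiplication maps. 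None of this is in your sketch.

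Your second route simply asserts that $\mu^i(\fp_x,M)$ ``can be computed from $\Nf$ applied to the resolution''. The fact that the minimal resolution is a direct summand of $\C(I^\bullet)$ only gives $\mu^i(\fp_x,M)\le\nu^i(x,\Nf(M))$. The reverse inequality needs $\Hom_{S_{\fp_x}}(k(\fp_x),\C(I^\bullet)_{\fp_x})$ to have zero differentials. That means every $\EE_x\to\EE_x$ component $\varphi$ of a differential must satisfy $\varphi(1_x)=0$. This does hold, because $\varphi(1_x)=\Nf(\varphi)(1_x)$, and $\Nf(\varphi)$ is the corresponding $S_x\to S_x$ component of $\partial$, which vanishes by minimality of $I^\bullet$ in $\Sq A_P$. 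But your proposal gives none of this.

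The paper closes the gap with a much simpler idea. If $\C(I^\bullet)$ were not minimal in $\*Mod S$, it would split off a nonzero complex $E\xrightarrow{\id}E$ in $\*Mod S$, with $E$ a nonzero sum of $\EE_x$'s, so $\Nf(E)\neq 0$. Now $\Nf=[\gHom_S(A_P,-)]_{\ge\b0}$ is an additive functor on all of $\*Mod S$, so it does not matter that the splitting maps need not be clean. Since $\Nf(\C(I^\bullet))=I^\bullet$, this gives a nonzero trivial summand $\Nf(E)\xrightarrow{\id}\Nf(E)$ of $I^\bullet$, contradicting its minimality in $\Sq A_P$.
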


\begin{proof}
(1): Since $\Str_P S\cong \Sq A_P$ and $\Nf(\EE_x)$ is isomorphic to $S_x$, which is injective in $\Sq A_P$, the assertion follows.

(2)  Let $I^\bullet$ be a minimal injective resolution of $\Nf(M)$ in $\Sq A_P$. 
Then the \v{C}ech hull $\C(I^\bullet)$ is a minimal injective resolution of $M$ in $\*Mod S$. In fact, $\C(S_x)$ is isomorphic to $\EE_x$, which is injective in $\*Mod S$. Hence $\C(I^\bullet)$ is an injective resolution of $M$ in $\*Mod S$. If $\check C(I^\bullet)$ had a nonzero trivial direct summand, applying $\Nf$ would yield a nonzero trivial direct summand of $I^\bullet$, contradicting its minimality.
So the assertion follows. 

(3): The first assertion follows from (2), and the second follows from the first. 
\end{proof}

The next theorem generalizes \cite{Mu00, T99}.

\begin{thm}\label{localcoh-ext}
The local cohomology module $H_{I_P}^i(\omega_S) \, (\cong H_{I_P}^i(S)(-\bw))$ is a straight module, and $\Nf (H_{I_P}^i(\omega_S))$ is isomorphic to $\gExt^i_S(A_P,\omega_S)$.
\end{thm}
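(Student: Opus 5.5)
We compute $H^i_{I_P}(\omega_S)$ as the cohomology of $\Gamma_{I_P}(\IB)$, where $\IB$ is the minimal $\ZZ^n$-graded injective resolution of $\omega_S$ described in the proof of Lemma~\ref{Ext is sq}. We then show that this complex lives in $\Str_P S$ and that its cohomology is straight.

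\textbf{Step 1: the terms.} Each summand of ${}^*I^i_S$ has the form $\EE_S(S/\fp)(-\ba_\fp)$. Applying $\Gamma_{I_P}$ kills every summand with $\fp\not\supset I_P$, so only the summands $\EE_x$ with $\fp=\fp_x$, $x\in P$, survive. By \cite[Proposition~4.3]{SY} they occur with no degree shift, and they are exactly those with $\rank x = p-i$. Hence
$$\Gamma_{I_P}({}^*I^i_S)=\bigoplus_{\substack{x\in P \\ \rank x=p-i}}\EE_x .$$

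\textbf{Step 2: the differentials.} We need the differentials of $\Gamma_{I_P}(\IB)$ to be clean, so that this is a complex in $\Str_P S$ whose terms are injective objects (Proposition~\ref{bass-str}(1)). To see this, apply $\Nf$: we get $\Nf(\Gamma_{I_P}(\IB))=[\gHom_S^\bullet(A_P,\IB)]_{\ge\b0}$. By Lemma~\ref{Hom(M,E)} with $M=A_P$, this is a complex of squarefree injectives $\bigoplus S_x$, and by Lemma~\ref{Ext is sq} its cohomology is $\gExt^i_S(A_P,\omega_S)$.

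Let $\widetilde\partial$ denote the clean extension (Lemma~\ref{extension to E}(2)) of the differential of this squarefree complex. If the original differential $\partial$ were not clean, we would replace the complex by the complex of clean extensions. Two points must be checked:
\begin{itemize}
\item The complex of clean extensions is still a complex. This follows from Lemma~\ref{extension to E}(3).
\item It is isomorphic to $\Gamma_{I_P}(\IB)$ in $\*Mod S$. For this, compare $\partial$ and $\widetilde\partial$ on $\Nf$: they agree there. Using \cite[Theorem~3.13]{SY} (cf.\ Remark~\ref{remark on clenness}), we adjust by automorphisms of the $\EE_x$ term by term, inductively in $i$, to conjugate $\partial$ into $\widetilde\partial$.
\end{itemize}
Alternatively, one can cite the cleanness of the residual complex differentials from \cite{SY} directly, if it is available there. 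This step is the main obstacle: we must identify the residual complex differential, restricted to the $I_P$-torsion part, with a clean map. I would first try the degree argument, namely that the $\ZZ^n$-graded differential maps $\Lf$ into $\Lf$ because of the degrees involved, before resorting to the conjugation argument.

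\textbf{Step 3: cohomology.} Given a complex $C^\bullet$ in $\Str_P S$, Theorems~\ref{Ker is straight} and \ref{Coker is straight} say that kernels, images and cokernels computed in $\Str_P S$ agree with those computed in $\*Mod S$. Hence $H^i(C^\bullet)$ computed in $\*Mod S$, which is $H^i_{I_P}(\omega_S)$, is a straight module, and it agrees with the cohomology computed in $\Str_P S$. Since $\Nf:\Str_P S\to\Sq A_P$ is an exact equivalence (Theorem~\ref{sq-str}), it commutes with cohomology, so
$$\Nf(H^i_{I_P}(\omega_S))\cong H^i(\Nf(C^\bullet))=H^i\bigl([\gHom_S^\bullet(A_P,\IB)]_{\ge\b0}\bigr)\cong\gExt^i_S(A_P,\omega_S).$$
The last isomorphism uses that this Ext is $\NN^n$-graded (Lemma~\ref{Ext is sq}). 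Finally, $H^i_{I_P}(\omega_S)\cong H^i_{I_P}(S)(-\bw)$ because $\omega_S=S(-\bw)$.
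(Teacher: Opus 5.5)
Your outline is the paper's proof. Steps~1 and~3 coincide with it: the paper likewise writes $H^i_{I_P}(\omega_S)=H^i(\Gamma^\bullet)$ for $\Gamma^\bullet=\Gamma_{I_P}(\IB)$ with $\Gamma^i=\bigoplus_{\rank x=p-i}\EE_x$. It then invokes Theorems~\ref{Ker is straight} and~\ref{Coker is straight}, and uses $\Nf(H^i(\Gamma^\bullet))\cong H^i(\Nf(\Gamma^\bullet))\cong H^i([\gHom_S(A_P,\IB)]_{\ge\b0})\cong\gExt^i_S(A_P,\omega_S)$. For Step~2 the paper does what you list only as a fallback. It cites \cite[Theorem~4.5]{SY}, which says the differentials of $\Gamma_{I_P}(\IB)$ are clean, so $\Gamma^\bullet$ is a complex in $\Str_P S$.

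That citation is the real input, and you should use it rather than your two substitutes, neither of which works as stated.

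\emph{The degree argument.} It would show that every degree-$\b0$ map $\EE_x\to\EE_{x'}$ is clean. But non-clean degree-$\b0$ maps exist (Remark~\ref{remark on clenness}), so the grading alone cannot separate $\Lf$ from $\Nf$. For instance, in Example~\ref{running} the monomial $t_1t_2\otimes t_z^{-1}\in\Lf(\EE_x)$ has degree $\b0$, the same degree as $1_{y_1}\in\Nf(\EE_{y_1})$.

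\emph{The conjugation argument.} It asks more of \cite[Theorem~3.13]{SY} than that result gives. As quoted in Remark~\ref{remark on clenness}, it corrects a single map $\EE_x\to\EE_{x'}$ by precomposing with an automorphism of $\EE_x$. The differential out of $\EE_x$, however, has a component into $\EE_{x'}$ for every $x'$ covered by $x$. You would therefore need one automorphism of $\EE_x$ making all these components clean simultaneously. You would also need the corrections at consecutive cohomological degrees to stay compatible, since each change of $\Gamma^i$ by an automorphism also alters $\partial^{i-1}$. None of this follows from the results you cite; the conclusion you are after is exactly \cite[Theorem~4.5]{SY}.
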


\begin{proof}
Let $\Gamma_{I_P}: \*Mod S \to \*Mod S$ be the $I_P$-torsion functor. So it gives  
$$
\Gamma_{I_P}(\EE(S/\fp))=
\begin{cases}
\EE(S/\fp) & \text{$\fp \supset I_P$, equivalently, $\fp=\fp_x$ for $\exists x \in P$},\\
0 & \text{otherwise} 
\end{cases}
$$
for a $\ZZ^n$-graded prime ideal $\fp$. 

Let $\DB_S$ be the $\ZZ^n$-graded dualizing complex of $S$. 
Then $\IB:={}^*\! D_S^{\bullet -p}$ is a minimal graded injective  resolution of $\omega_S$. 
The $i$-th cohomology $H^i(\Gamma^\bullet)$ of the cochain complex $\Gamma^\bullet :=\Gamma_{I_P}(\IB)$  
is the $i$-th local cohomology $H_{I_P}^i(\omega_S)$.  
As we have shown in \cite[\S 4]{SY}, $\Gamma^\bullet$ is of the form 
$$0 \too \Gamma^{p-d}\too \Gamma^{p-d+1}\too \cdots \too \Gamma^{p-1}\too \Gamma^p\too 0$$ 
with 
$$\Gamma^{i}=\bigoplus_{\substack{x \in P \\\rank x=p-i}} \EE_x.$$
Moreover, the differential maps of $\Gamma^\bullet$ are clean by \cite[Theorem~4.5]{SY}, and $\Gamma^\bullet$ is a cochain complex in $\Str_P S$.   By Theorems~\ref{Ker is straight} and \ref{Coker is straight}, $H^i_{I_P}(\omega_S)$ is straight. 

Since  the functor $\Nf: \Str_P S \to \Sq A_P$ is exact and $\Nf(\Gamma^\bullet)\cong [\Hom_S(A_P, \Gamma^\bullet)]_{\ge \b0}$, we have 
\begin{align*}
\Nf(H_{I_P}^i(\omega_S)) \cong \Nf(H^i(\Gamma^\bullet)) \cong H^i(\Nf(\Gamma^\bullet)) &\cong H^i( [\Hom_S(A_P, \Gamma^\bullet)]_{\ge \b0})\\ & \cong H^i( [\Hom_S(A_P, {}^*\!I_S^\bullet)]_{\ge \b0})\\
&\cong \gExt_S^i(A_P, \omega_S).
\end{align*}
\end{proof}

\begin{prop}\label{simple in str}
Let $\omega_{S_x}:=S_x(-\deg t_x)$ be the $\ZZ^n$-graded canonical module of $S_x$. Then the following hold. 
\begin{enumerate}
\item A straight module $M$ is simple in $\Str_P S$ (i.e. there is no non-trivial straight submodule) if and only if $M\cong \C(\omega_{S_x})$ 
for some $x\in P$.
\item For $M \in \Str_P S$,  there is a chain of straight  submodules $0=M_0\subset M_1\subset \cdots \subset M_t=M$ such that each quotient is isomorphic to $\C(\omega_{S_x})$ for some $x \in P$.
\item For $x \in P$ with $\rank x=r$, $\C(\omega_{S_x}) \cong H_{\fp_x}^{p-r}(\omega_{S})$. 
\end{enumerate}
\end{prop}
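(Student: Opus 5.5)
Parts (1) and (2) follow from Theorem~\ref{sq-str} and Lemma~\ref{simple in sq}; part (3) needs an honest computation.

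\emph{Part (1).} Since $\Nf:\Str_P S\to\Sq A_P$ is an equivalence with quasi-inverse $\C$, simple objects correspond to simple objects. Here a subobject of $M$ in $\Str_P S$ is a clean monomorphism, which by Corollary~\ref{inj-mono} is an honest straight submodule. Hence $M$ is simple in $\Str_P S$ if and only if $\Nf(M)$ is simple in $\Sq A_P$. By Lemma~\ref{simple in sq}(1) this happens exactly when $\Nf(M)\cong\omega_{S_x}$, that is, when $M\cong\C(\omega_{S_x})$ (using Remark~\ref{non-full subcategory} to pass between isomorphism in $\Str_P S$ and in $\*Mod S$).

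\emph{Part (2).} Take the composition series $0=N_0\subset\cdots\subset N_t=\Nf(M)$ of Lemma~\ref{simple in sq}(2) and apply $\C$. This gives clean monomorphisms $\C(N_{i-1})\to\C(N_i)\to M$, which are injective by Corollary~\ref{inj-mono}, so we obtain a chain of straight submodules $M_i$ of $M$. The cokernels computed in $\Str_P S$ agree with the usual quotients $M_i/M_{i-1}$ by Theorem~\ref{Coker is straight}(2). Since $\C$ is exact, these cokernels are $\C(N_i/N_{i-1})\cong\C(\omega_{S_x})$.

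\emph{Part (3).} By part (1) and Remark~\ref{non-full subcategory}, it suffices to show three things:
\begin{enumerate}
\item $H^{p-r}_{\fp_x}(\omega_S)$ is straight;
\item $\Nf(H^{p-r}_{\fp_x}(\omega_S))\cong\omega_{S_x}$;
\item the resulting isomorphism lifts to $\*Mod S$.
\end{enumerate}

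For (1), imitate the proof of Theorem~\ref{localcoh-ext} with $I_P$ replaced by $\fp_x$. The complex $\Gamma_{\fp_x}(\IB)$ keeps exactly the summands $\EE_z$ with $\fp_z\supset\fp_x$, that is, $z\le x$, placed in cohomological degree $p-\rank z$. This is the subcomplex of $\Gamma_{I_P}(\IB)$ indexed by the interval $[\hat 0,x]$, so its differentials are clean by \cite[Theorem~4.5]{SY}. Theorems~\ref{Ker is straight} and \ref{Coker is straight} then show that its cohomology is straight.

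For (2), exactness of $\Nf$ on $\Str_P S$ gives
\[
\Nf(H^i_{\fp_x}(\omega_S))\cong H^i\bigl([\gHom_S(S/\fp_x,\IB)]_{\ge\b0}\bigr)\cong \gExt^i_S(S_x,\omega_S),
\]
exactly as in Theorem~\ref{localcoh-ext}, with $A_P$ replaced by $S_x$. Since $S_x$ is a polynomial ring of dimension $r$, it is Cohen--Macaulay, so $\gExt^i_S(S_x,\omega_S)$ vanishes for $i\ne p-r$. By graded duality, $\gExt^{p-r}_S(S_x,\omega_S)\cong\omega_{S_x}=S_x(-\deg t_x)$.

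The main obstacle is (1). One must check that the truncated complex $\Gamma_{\fp_x}(\IB)$ is indeed a complex in $\Str_P S$ with clean differentials. This is true because $\Gamma_{\fp_x}$ applied to the clean complex $\Gamma_{I_P}(\IB)$ kills exactly the summands $\EE_z$ with $z\not\le x$, and restrictions of clean maps to summands are clean. The degree bookkeeping (no shifts, $\ba_{\fp_z}=\b0$) is handled as in \cite[Proposition~4.3]{SY}.

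Finally, (3) follows from (1) and (2) by Remark~\ref{non-full subcategory}: an isomorphism of the $\Nf$'s yields $H^{p-r}_{\fp_x}(\omega_S)\cong\C(\omega_{S_x})$ in $\*Mod S$.
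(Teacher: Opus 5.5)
Your proof is correct and is essentially the paper's argument. For (1) and (2) the paper simply cites Theorem~\ref{sq-str} and Lemma~\ref{simple in sq}; for (3) it applies the exact functor $\C$ to the squarefree complex $[\gHom_S(S_x,\IB)]_{\ge\b0}$, whose only cohomology is $\omega_{S_x}$ in degree $p-r$, and identifies the result with $\Gamma_{\fp_x}(\IB)$. That is your argument run through $\C$ instead of $\Nf$, and your version makes explicit what the paper leaves implicit: that $\Gamma_{\fp_x}(\IB)$ is a subcomplex of $\Gamma_{I_P}(\IB)$ with clean differentials, and that $\Nf(\Gamma_{\fp_x}(\IB))=[\gHom_S(S_x,\IB)]_{\ge\b0}$ termwise (both are $\bigoplus_{z\le x}S_z$), which needs one line since $\Nf$ is defined via $A_P$ rather than $S_x$.
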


\begin{proof}
(1) and (2) follow from Lemma~\ref{simple in sq} and Theorem~\ref{sq-str}. 

(3) Recall  that $\gExt_S^{p-r}(S_x, \omega_S) \cong \omega_{S_x}$, and $\gExt_S^i(S_x, \omega_S)=0$ for $i \ne p-r$. 
Let $I_S^\bullet$ be the minimal injective resolution of $\omega_S$ in $\*Mod S$.  As shown in the proof of Lemma~\ref{Ext is sq}, $\omega_{S_x}$ is the $(p-r)$-th cohomology of $I_x^\bullet:=[\gHom_S(S_x, I^\bullet_S)]_{\ge \b0}$, which has the form  
$$I_x^\bullet: 0 \to I_x^{p-r} \to I_x^{p-r+1} \to \cdots \to I_x^p \to 0$$
with
$$I_x^i=\bigoplus_{\substack{z \in P, z \le x \\ \rank z = p-i}} S_z.$$

Since 
$$\C(I^\bullet_x): 0 \to \C(I_x^{p-r}) \to \C(I_x^{p-r+1}) \to \cdots \to \C(I_x^p) \to 0$$
with
$$\C(I_x^i)=\bigoplus_{\substack{z \in P, z \le x \\ \rank z = p-i}} \EE_z,$$
this cochain complex is isomorphic to the complex $\Gamma_{\fp_x}(I_S^\bullet)$, whose $i$-th cohomology is $H_{\fp_x}^i(\omega_S)$. 
So we are done. 
\end{proof}


\begin{rem}
(1) We are not sure whether $\Str_P S$ is closed under extensions in $\*Mod S$. Thus, the converse of Proposition~\ref{simple in str} (2) might not hold. 

(2) If $\#P \ge 3$, equivalently, $I_P \ne 0$, then $H_{\fp_x}^{p-r}(\omega_{S})$ for 
$r:=\rank x= \dim S_x$ is not finitely generated as an $S$-module for any $x\in P$. We now verify this fact. We have $$H_{\fp_x}^{p-r}(S) \otimes_S S_{\fp_x}\cong H_{\fp_xS_{\fp_x}}^{p-r}(S_{\fp_x}).$$ Since $\fp_x \supset I_P \neq 0$,  $S_{\fp_x}$ is a regular ring of positive dimension containing a field $K$, the right side is not finitely generated. Therefore, $H_{\fp_x}^{p-r}(\omega_{S})$ is not finitely generated. Since any non-zero straight module $M$ contains $H_{\fp_x}^{p-r}(\omega_{S})$ as a submodule for some $x \in P$
by Proposition \ref{simple in str}, $M$ is not finitely generated. 

\end{rem}

The following corollary follows immediately from Theorem \ref{localcoh-ext}.

\begin{cor}
We have $$\mathrm{cd}(I_P) \, (:=\max\{i \mid H_{I_P}^i(S)\neq 0\}) =\dim S-\depth(S/I_P).$$
\end{cor}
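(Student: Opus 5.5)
The plan is to translate the vanishing of $H^i_{I_P}(S)$ into the vanishing of $\gExt^i_S(A_P,\omega_S)$ by means of Theorem~\ref{localcoh-ext}, and then to read off the largest nonvanishing index from graded local duality.

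First, since $H_{I_P}^i(S)(-\bw)\cong H_{I_P}^i(\omega_S)$ and a degree shift does not affect vanishing, we have $\mathrm{cd}(I_P)=\max\{i\mid H_{I_P}^i(\omega_S)\neq 0\}$.

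Next, I claim that for every $i$,
$$H_{I_P}^i(\omega_S)\neq 0 \iff \gExt_S^i(A_P,\omega_S)\neq 0 .$$
By Theorem~\ref{localcoh-ext}, $M:=H_{I_P}^i(\omega_S)$ is straight with $\Nf(M)\cong\gExt^i_S(A_P,\omega_S)$. If $\Nf(M)\neq 0$, then $M\supset \Nf(M)$ is nonzero. Conversely, suppose $M\neq 0$. By Lemma~\ref{ess extension}, $M$ is a ${}^*$essential extension of $\Nf(M)$. Taking the nonzero graded submodule $M$ itself gives $M\cap \Nf(M)=\Nf(M)\neq 0$.

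Finally, it remains to show
$$\max\{i\mid \gExt^i_S(A_P,\omega_S)\neq 0\}=p-\depth A_P .$$
I would apply the $\ZZ^n$-graded local duality used in Proposition~\ref{bass-sq}, namely $H^j_\fm(A_P)^\vee\cong\gExt^{p-j}_S(A_P,\omega_S)$, which remains valid for our nonstandard grading by \cite[14.4.1]{BS}. This gives $\gExt^{i}_S(A_P,\omega_S)\neq0$ if and only if $H^{p-i}_\fm(A_P)\neq 0$. It then suffices to use the graded version of Grothendieck's characterization $\depth A_P=\min\{j\mid H^j_\fm(A_P)\neq 0\}$, where $\fm$ is the graded maximal ideal. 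This characterization holds for finitely generated graded modules over the ${}^*$local ring $S$, and equivalently one can localize at $\fm$, since both depth and the vanishing of $H^j_\fm$ are detected there. Combining the three steps yields $\mathrm{cd}(I_P)=p-\depth(S/I_P)=\dim S-\depth(S/I_P)$.

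The only point needing care is the last step. There one must check that the grading, in which $\deg t_x$ may exceed $1$ and graded pieces may be more than one-dimensional, causes no trouble for local duality and for the depth characterization. Since $\fm$ is the unique graded maximal ideal and all modules involved are finitely generated, I expect the standard ${}^*$local arguments to go through verbatim.
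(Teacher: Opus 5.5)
Your proposal is correct and follows the same route as the paper. The paper's proof simply notes that Theorem~\ref{localcoh-ext} gives $\mathrm{cd}(I_P)=\max\{i \mid \gExt_S^i(A_P,\omega_S)\neq 0\}$ and that this maximum equals $\dim S-\depth(S/I_P)$. You additionally make explicit the two points left implicit there: a straight module vanishes if and only if its $\Nf$ vanishes (Lemma~\ref{ess extension}), and the graded local duality together with the depth characterization gives the last equality.
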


\begin{proof}
By Theorem \ref{localcoh-ext}, we have $\mathrm{cd}(I_P)=\max\{ i \mid \gExt_S^i(A_P, \omega_S)\neq 0\}$, where the right side equals $\dim S-\depth(S/I_P)$.
\end{proof}

\begin{thm}\label{Lyubeznik number}
The Lyubeznik numbers of  $A_P$ are given by
\begin{equation}\label{double Ext}
\lambda_{i,j}(A_P)=\dim_K [\gExt_S^{p-i}(\gExt_S^{p-j}(A_P,\omega_S),\omega_S)]_{\mathbf{0}}.
\end{equation}
\end{thm}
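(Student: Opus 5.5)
By definition, $\lambda_{i,j}(A_P)=\mu_i(\fm, H_{I_P}^{p-j}(S))$. Since $\omega_S=S(-\bw)$ is only a degree shift of $S$, the Bass numbers (as dimensions of $\Ext^i_S(K,-)$, ignoring grading) of $H_{I_P}^{p-j}(S)$ and $H_{I_P}^{p-j}(\omega_S)$ agree. So I will work with $M:=H_{I_P}^{p-j}(\omega_S)$.

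First I reduce the ungraded Bass number to a graded one. $M$ is $\ZZ^n$-graded and $\fm$ is a $\ZZ^n$-graded prime, so a minimal injective resolution of $M$ in $\*Mod S$ computes $\mu_i(\fm,M)$. This is the standard fact that ungraded Bass numbers at graded primes equal the graded ones; the graded resolution becomes a minimal injective resolution after forgetting the grading, up to summands $E(S/\fp)$ with $\fp\ne\fm$. Concretely, $\mu_i(\fm,M)$ is the multiplicity of ${}^*\!E_{\hat 0}$, with all shifts, in ${}^*\!E^i$.

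By Theorem~\ref{localcoh-ext}, $M$ is straight with $\Nf(M)\cong\gExt_S^{p-j}(A_P,\omega_S)$, which lies in $\Sq A_P$ by Lemma~\ref{Ext is sq}. Proposition~\ref{bass-str}(2) says the minimal graded injective resolution of $M$ consists of unshifted copies of the $\EE_x$. Part (3) of the same proposition gives
\[
\mu_i(\fm,M)=\mu^i(\fp_{\hat 0},M)=\nu^i(\hat 0,\Nf(M))=\nu^i\bigl(\fm,\gExt_S^{p-j}(A_P,\omega_S)\bigr).
\]
Finally, Proposition~\ref{bass-sq}, applied to the squarefree module $\gExt_S^{p-j}(A_P,\omega_S)$, yields
\[
\nu^i(\fm,\gExt_S^{p-j}(A_P,\omega_S))=\dim_K[\gExt_S^{p-i}(\gExt_S^{p-j}(A_P,\omega_S),\omega_S)]_{\mathbf 0},
\]
which is \eqref{double Ext}.

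The main obstacle is the first reduction: identifying the ungraded Lyubeznik number, defined via $\Ext_S^i(K,-)$ with no regard to grading, with the multiplicity of $\EE_{\hat0}$ in the $\ZZ^n$-graded minimal injective resolution. This needs care because the $\ZZ^n$-grading here is nonstandard and $S_\ba$ can have dimension $\ge 2$. I would handle it by computing $\Ext_S^i(K,M)$ via the graded resolution. $\gHom_S(K,-)$ vanishes on $\EE(S/\fp)$ for $\fp\neq\fm$ and is $K$ in one degree on ${}^*\!E_{\hat0}$. Minimality makes the differentials of $\gHom_S(K,{}^*\!E^\bullet)$ zero, so $\dim_K\Ext^i_S(K,M)$ counts the copies of ${}^*\!E_{\hat 0}$. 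This uses that ${}^*\!E^\bullet$ is an injective resolution in $\Mod S$ as well, since $\ZZ^n$-graded injectives with finite-type-free socle are injective for noetherian $S$. The remaining steps are direct citations of earlier results.
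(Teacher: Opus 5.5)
Your chain of equalities is exactly the paper's proof. Theorem~\ref{localcoh-ext} identifies $\Nf(H_{I_P}^{p-j}(\omega_S))$ with $\gExt_S^{p-j}(A_P,\omega_S)$, Proposition~\ref{bass-str}(3) turns $\mu^i(\fm,-)$ into $\nu^i(\fm,\Nf(-))$, and Proposition~\ref{bass-sq} finishes. The paper writes the first equality $\lambda_{i,j}(A_P)=\mu^i(\fm,H_{I_P}^{p-j}(\omega_S))$ without comment.

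Your justification of that first step, however, rests on a false claim. The $\ZZ^n$-graded minimal injective resolution is not an injective resolution in $\Mod S$: the summands $\EE_x$ with $x\neq\hat 0$ are not injective there. Take $x\neq\hat 0$ and $y_i\le x$. Then $t_i$ acts on $\EE_x$ only through the Laurent factor $K[t_l^{\pm1}\mid y_l\le x]$, because $t_i$ kills the $K[t_z^{-1}]$ factor. So $t_i-1$ is not surjective on $\EE_x$, hence $\EE_x$ is not divisible and cannot be injective over the domain $S$. For the same reason, your phrase ``up to summands $E(S/\fp)$ with $\fp\neq\fm$'' is wrong: those summands are graded hulls ${}^*\!E(S/\fp)$, not the ungraded $E(S/\fp)$.

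The correct bridge is simpler. Since $K$ is finitely generated, $\Ext_S^i(K,M)\cong\gExt_S^i(K,M)$ as vector spaces: compute both with a graded free resolution of $K$, as recalled in the Preliminaries. Then $\gExt_S^i(K,M)$ is computed by the graded minimal injective resolution of $M$. There your count via $\gHom_S(K,-)$ is valid: it vanishes on $\EE_x$ for $x\neq\hat 0$, gives $K$ on each copy of $\EE_{\hat 0}$, and minimality kills the differentials. With this replacement your proof is complete.
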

\begin{proof}
From Propositions \ref{bass-sq}, \ref{bass-str} and Theorem \ref{localcoh-ext},
\begin{eqnarray*}
\lambda_{i,j}(A_P)&=&\mu^i(\fm, H_{I_P}^{p-j}(\omega_S)) \\
&=&\nu^i(\fm, \Nf( H_{I_P}^{p-j}(\omega_S))\\
&=&\nu^i(\fm, \gExt_S^{p-j}(A_P,\omega_S))\\
&=&\dim_K[\gExt_S^{p-i}(\gExt_S^{p-j}(A_P,\omega_S), \omega_S)]_{\mathbf{0}}.
\end{eqnarray*}
\end{proof}

\begin{rem}
Assume that $\chara(K)>0$. Then we see that  $A_P$ is {\it $F$-pure}. In fact, since the set of standard monomials forms a $K$-basis of $A_P$,  the argument for Stanley–Reisner rings in \cite[Example~2.2]{SW} carries over to $A_P$. 
If a standard graded ring  $R=\bigoplus_{i \ge 0} R_i$ with $R_0=K$ is $F$-pure, then \cite{DGN} shows that the equation corresponding to \eqref{double Ext} holds for $R$. However, we cannot use their result directly, since $A_P$ is not standard graded.    
\end{rem}

\section{Lyubeznik numbers are topological invariants}
Recall that a simplicial poset $P$ gives a regular cell complex $\Gamma(P)$. Let $X:=|\Gamma(P)|$ be the underlying space of $\Gamma(P)$, and $c(x)$ the open cell corresponding to $x\in P^*$. Hence, for each $x\in P^* \setminus \{y_1, \ldots, y_n\}$, $c(x)$ is a subset of $X$ homeomorphic to $\mathbb{R}^{\rank x-1}$, and $c(y_i)$ is a single point for all $i$. Moreover $X$ is the disjoint union of the cells $c(x)$, and $x\geq y$ if and only if $\ol{c(x)} \supset c(y)$.

As in the preceding section, let $\Lambda$ be the incidence algebra of $P$, and $\mod \Lambda$ the category of finitely generated left $\Lambda$--modules.
In \cite{Y05}, a constructible sheaf $N^\dagger$ on $X$ is associated with each $N \in \mod \Lambda$.  This construction is essentially the same as that in \cite{Sh}. 
By the equivalence $\Sq A_P\cong \mdL$, we have the constructible sheaf $M^+$ on $X$ corresponding to $M\in \Sq A_P$ (\cite{Y11}). Here, we give a precise construction for the reader's convenience. For further details on the theory of sheaves, see \cite{Iv}. 

For $x\in P$ with $r=\rank x$, set $\ua(x):=\sum_{y_i \le x} r \be_i^x \in \MM(x)\subset \MM$. For $M\in \Sq A_P$, set
$$\sp(M):=\bigcup_{x\in P^*} c(x) \times M_{\ua(x)}.$$
 Let $\pi:\sp(M)\to X$ be the projection sending $(p,m)\in c(x)\times M_{\ua(x)}\subset \sp(M)$ to $p\in c(x)\subset X$.
For an open subset $U\subset X$ and a map $s:U \to \sp(M)$, we consider the following conditions.
\begin{itemize}
\item[(C1)] $\pi\circ s=\id_U$ and $s_p=t^{\ua(x)-\ua(y)}\cdot s_q$ for all $p \in c(x)\cap U, q\in c(y)\cap U$ with $x\geq y$. Here, $s_p\in M_{\ua(x)}$ (resp. $s_q\in M_{\ua(y)}$) is the element with $s(p)=(p,s_p)$ (resp. $s(q)=(q,s_q)$).
\item[(C2)] There is an open covering $U=\bigcup_{i\in I} U_i$ such that the restriction of $s$ to $U_i$ satisfies (C1) for all $i\in I$.
\end{itemize}

We define a sheaf $M^+$ on $X$ as follows. For an open set $U\subset X$, set
$$M^+(U):=\{s\mid s:U\to \sp(M)\;  \text{is a map satisfying (C2)}\},$$
and let the restriction map $M^+(U)\to M^+(V)$ for $U\supset V$ be the natural one. Then $M^+$ is a constructible sheaf with respect to the cell decomposition $\Gamma(P)$. Note that $(A_P)^+$ is the $K$--constant sheaf $\const_X$ on $X$, and $(S_x)^+$ is the extension to $X$ of the $K$--constant sheaf on the closed cell $\overline{c(x)}$. 
An element $\alpha\in M_\b0$ naturally determines a global section $s_\alpha \in M^+(X)$. Namely, for $p\in X$ with $p\in c(x)$, we define the value of $s_\alpha$ at $p$ to be $t^{\ua(x)}\alpha \in M_{\ua(x)}$. 
Let $\Sh(X)$ be the category of sheaves of $K$--vector spaces on $X$ with finite-dimensional stalks. We have the functor $(-)^+: \Sq A_P\to \Sh(X)$, which is exact.

\begin{thm}[cf. \cite{Y11}, Theorem 4.2]\label{local-sheaf coh}
In the above setting, we have
$$H^i(X; M^+)\cong [H_\fm^{i+1}(M)]_\b0$$ 
for all $i\geq 1$, and an exact sequence
\begin{equation}\label{local-seq}
0\to [H_\fm^0(M)]_\b0 \to M_0 \to H^0(X; M^+)\to [H_\fm^1(M)]_\b0 \to 0.
\end{equation}
\end{thm}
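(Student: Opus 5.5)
The plan is to compute both sides with injective resolutions in $\Sq A_P$ and to compare a cellular cochain computation of sheaf cohomology with the degree-$\b0$ part of $\Gamma_\fm$.

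\textbf{Step 1: injective resolution and its $\b0$-part.} Take the minimal injective resolution $I^\bullet_M$ of $M$ in $\Sq A_P$ as in \eqref{injeresol in Sq}, with $I^i_M=\bigoplus_x S_x^{\nu^i(x,M)}$. Since $(-)^+$ is exact, $(I^\bullet_M)^+$ is a resolution of $M^+$ by sheaves of the form $(S_x)^+$. Each $(S_x)^+$ is the extension by zero, i.e.\ the pushforward along the closed inclusion, of the constant sheaf $\const$ on $\overline{c(x)}$. Since $\overline{c(x)}$ is a closed ball for $x\ne\hat 0$, we get $H^i(X;(S_x)^+)=H^i(\overline{c(x)};K)=0$ for $i>0$ and $H^0=K$. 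For $x=\hat 0$ we have $(S_{\hat 0})^+=(K)^+=0$, because $K$ lives only in $\MM$-degree $\b0$ and $\sp$ only uses the cells $c(x)$ with $x\in P^*$.

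So $(I^\bullet_M)^+$ is a $\Gamma$-acyclic resolution, and $H^i(X;M^+)=H^i(\Gamma(X;(I^\bullet_M)^+))$. Moreover, $\Gamma(X;(S_x)^+)=K\cong [S_x]_\b0$ for $x\neq\hat 0$, via the map $\alpha\mapsto s_\alpha$.

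\textbf{Step 2: identify the complexes.} Next I will check that the natural map $[I^\bullet_M]_\b0\to\Gamma(X;(I^\bullet_M)^+)$, given by $\alpha\mapsto s_\alpha$, is compatible with the differentials. It is an isomorphism on the summands $S_x$ with $x\ne\hat0$ and kills the summands $S_{\hat0}=K$. Hence there is a short exact sequence of complexes
$$0\to C^\bullet\to [I^\bullet_M]_\b0\to \Gamma(X;(I^\bullet_M)^+)\to 0,$$
where $C^i=K^{\nu^i(\fm,M)}=[\Gamma_\fm(I^i_M)]_\b0$. Here I use \eqref{local cohomologies of S_x}: $\Gamma_\fm(S_x)=0$ for $x\ne\hat0$, and $\Gamma_\fm(K)=K$.

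\textbf{Step 3: read off the two conclusions.} The complex $[I^\bullet_M]_\b0$ is exact in positive degrees, since taking the degree-$\b0$ part is exact and $I^\bullet_M$ resolves $M$; its $H^0$ is $M_\b0$. The cohomology of $C^\bullet$ is $[H^i_\fm(M)]_\b0$. Indeed, $I^\bullet_M$ consists of $\Gamma_\fm$-acyclic modules: by Lemma~\ref{graded enough injective} they are injective in $\gModN A_P$, and \eqref{local cohomologies of S_x} shows $H^i_\fm(S_x)$ vanishes in degree $\b0$ for $i>0$. Proposition~\ref{bass-sq} also gives this directly.

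The long exact cohomology sequence of the short exact sequence of complexes then yields \eqref{local-seq} in low degrees and $H^i(X;M^+)\cong[H^{i+1}_\fm(M)]_\b0$ for $i\ge1$.

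\textbf{Main obstacle.} The delicate point is the compatibility of $\alpha\mapsto s_\alpha$ with the differentials. These are $\MM$-graded maps $S_x\to S_z$ for $x\ge z$, namely scalar multiples of the canonical surjection. I must check that, on $\b0$-parts, these maps agree with the restriction maps between global sections of the closed-cell constant sheaves. This comes down to the fact that $t^{\ua(x)}$ maps to $t^{\ua(z)}$-compatible elements under $\pi$, which is condition (C1). I will verify it directly from the definition of $s_\alpha$.

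A secondary check is that $\overline{c(x)}$ is contractible, which holds because $\Gamma(P)$ is a regular CW complex. This guarantees the acyclicity used in Step 1.
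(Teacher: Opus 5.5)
Your argument is correct. The paper gives no proof of this statement; it only cites \cite[Theorem~4.2]{Y11}. So what you have is a self-contained derivation from ingredients already in the paper. These are the exactness of $(-)^+$, and the fact that $(S_x)^+$ is the constant sheaf on the closed ball $\overline{c(x)}$, so that $(I^\bullet_M)^+$ is a $\Gamma$-acyclic resolution of $M^+$ with $(S_{\hat 0})^+=0$. The third ingredient is the degree-$\b0$ computation \eqref{local cohomologies of S_x}. The short exact sequence $0\to[\Gamma_\fm(I^\bullet_M)]_{\b0}\to[I^\bullet_M]_{\b0}\to\Gamma(X;(I^\bullet_M)^+)\to 0$ and its long exact sequence give exactly the statement. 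Compared with a bare citation, your route also identifies the middle map of \eqref{local-seq} as $\alpha\mapsto s_\alpha$, which is the map Theorem~\ref{Ext-correspondence}(2) refers to.

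Two small points should be fixed.

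First, the $S_x$ are not $\Gamma_\fm$-acyclic: for $x\neq\hat 0$ one has $H^{\rank x}_\fm(S_x)\neq 0$. Their injectivity in $\gModN A_P$ (Lemma~\ref{graded enough injective}) plays no role here. What you need, and what \eqref{local cohomologies of S_x} gives, is acyclicity for the functor $[\Gamma_\fm(-)]_{\b0}$, i.e.\ $[H^i_\fm(S_x)]_{\b0}=0$ for $i>0$. Set $Z^j:=\Ker(I^j_M\to I^{j+1}_M)$. Dimension shifting along $0\to Z^j\to I^j_M\to Z^{j+1}\to 0$ with the $\delta$-functor $[H^\bullet_\fm(-)]_{\b0}$ then gives $H^i(C^\bullet)\cong[H^i_\fm(M)]_{\b0}$ canonically. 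Appealing to Proposition~\ref{bass-sq} instead only gives equality of dimensions (after noting that $C^\bullet$ has zero differentials by minimality). That suffices for the abstract isomorphisms but not for a canonical identification of the maps.

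Second, the ``main obstacle'' is automatic. For any morphism $f$ in $\Sq A_P$, $f^+$ acts stalkwise by $f$, and $f(t^{\ua(x)}\alpha)=t^{\ua(x)}f(\alpha)$. Hence $f^+(s_\alpha)=s_{f(\alpha)}$, so $\alpha\mapsto s_\alpha$ is a natural transformation and commutes with all differentials. For the isomorphism $[S_x]_{\b0}\cong\Gamma(X;(S_x)^+)$ with $x\neq\hat 0$, you only need two facts. One is $s_1\neq 0$, since its value on $c(x)$ is $t^{\ua(x)}\neq 0$ in $S_x$. The other is that $\overline{c(x)}$ is connected.
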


Note that $X$ admits Verdier's dualizing complex $\cDx$ with coefficients in $K$, and recall that $M \in \Sq A_P$ implies 
$\gExt_S(M, \omega_S) \in \Sq A_P$ by Lemma~\ref{Ext is sq}.

\begin{thm}[cf. \cite{AY}, Theorem 5.5]\label{Ext-correspondence}
With the above notation, we have the following:
\begin{enumerate}
\item[(1)] We have 
 $$\gExt_S^{p-i}(M,\omega_S)^+ \cong \SExt^{1-i}(M^+, \cDx)$$
 for all $0 \le i \le p$ and 
 $$[\gExt_S^{p-i}(M,\omega_S)]_\b0 \cong \Ext^{1-i}(M^+, \cDx)$$
 for $2 \le i \le p$. 
\item[(2)] Under the second isomorphisms in (1), for $i\geq 2$, the natural map 
$$\Ext^{1-i}(M^+, \cDx)\to \Gamma(X; \SExt^{1-i}(M^+, \cDx))$$
coincides with the middle map
$$[\gExt_S^{p-i}(M,\omega_S)]_\b0 \to \Gamma(X; \gExt_S^{p-i}(M,\omega_S)^+)$$
of (\ref{local-seq}).
\end{enumerate}
\end{thm}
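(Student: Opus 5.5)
I plan to follow the strategy of \cite[Theorem~5.5]{AY}, adapted to simplicial posets. The idea is to realize both sides on one explicit complex, using the injective resolution of $\omega_S$ that already appears in the proof of Lemma~\ref{Ext is sq}.

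The first step is to write $\mathrm{R}\gHom_S(M,\omega_S)$ concretely. For $M\in\Sq A_P$, the complex $J^\bullet:=[\gHom_S^\bullet(M,\IB)]_{\ge\b0}$ has components
$$J^{i}=\bigoplus_{\rank x=p-i}(M_x)^*\otimes_K S_x$$
by Lemma~\ref{Hom(M,E)}. Its cohomology computes $\gExt^i_S(M,\omega_S)$. Since $(-)^+$ is exact, applying it gives a complex of sheaves $(J^\bullet)^+$ with
$$(J^{p-r})^+=\bigoplus_{\rank x=r}(M_x)^*\otimes_K (S_x)^+,$$
where $(S_x)^+$ is the constant sheaf on the closed cell $\overline{c(x)}$, extended by zero.

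The second step is to identify $(J^\bullet)^+$, up to the shift $p-i\leftrightarrow 1-i$, with the standard cellular model of $\mathrm{R}\mathcal{H}om(M^+,\cDx)$ on the regular cell complex $\Gamma(P)$. Recall that for a constructible sheaf $F$ on a regular CW complex, $\cDx$ is quasi-isomorphic to a complex whose degree $-(r-1)$ term is $\bigoplus_{\rank x=r}(\text{orientation})\otimes K_{\overline{c(x)}}$, with differentials given by incidence numbers; here cells have dimension $\rank x-1$. Consequently $\mathcal{H}om(F,-)$ of that complex has terms $\bigoplus F_{c(x)}^*\otimes K_{\overline{c(x)}}$, and since these are injective enough (soft and acyclic on closed cells), the complex computes $\mathrm{R}\mathcal{H}om(F,\cDx)$. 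The degree matching gives $\rank x=p-i$ versus cell dimension $\rank x-1$, which explains the shift $1-i$. The differentials of $J^\bullet$ come from the clean maps $\EE_x\to\EE_z$ for $z\lessdot x$, i.e.\ scalar multiples of the canonical surjections $S_x\to S_z$. By \cite[\S4]{SY} these scalars are the incidence signs $[x:z]$ of the residual complex. Checking that these signs agree with the cellular orientation signs, compatibly with the transition maps $t^{\ua(x)-\ua(z)}$ of $M^+$, is the main obstacle. I would reduce it to $M=S_w$, where both sides are computed by hand, and then use functoriality: both constructions are exact in $M$, and every $M$ has a finite resolution by $S_x$'s by Proposition~\ref{Sq A_P}. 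Taking cohomology sheaves then gives the first isomorphism $\gExt^{p-i}_S(M,\omega_S)^+\cong\SExt^{1-i}(M^+,\cDx)$.

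For the second isomorphism, apply $\Gamma(X;-)$ to the cellular complex. We have $\Gamma(X;(S_x)^+)=K=[S_x]_\b0$ for $x\ne\hat 0$, so $\Gamma(X;(J^\bullet)^+)=[J^\bullet]_\b0$ with the $\hat0$-summand (degree $p$) removed. Because each $K_{\overline{c(x)}}$ is $\Gamma$-acyclic, this complex computes $\Ext^{1-i}(M^+,\cDx)$. The $\hat0$-term only affects degrees $p$ and $p-1$ of $J^\bullet$, that is, $i\le1$. Hence for $2\le i\le p$ we get $[\gExt^{p-i}_S(M,\omega_S)]_\b0\cong\Ext^{1-i}(M^+,\cDx)$.

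Part (2) follows by naturality. The map $\Ext\to\Gamma(X;\SExt)$ is induced by the same complex, and the middle map of \eqref{local-seq} sends $\alpha\in N_\b0$ to $s_\alpha$. This is exactly the map $[N]_\b0\to\Gamma(X;N^+)$ obtained from the identification $\Gamma(X;(S_x)^+)=[S_x]_\b0$ applied to $N=\gExt^{p-i}_S(M,\omega_S)$.
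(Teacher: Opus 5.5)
Your overall strategy is the one behind the paper's proof. The paper simply defers to \cite[Theorem~4.2]{Y03}, \cite[Lemma~5.11]{Y08} and \cite[Theorem~4.1]{Y11}, and those ingredients amount to three steps. First, sheafify $J^\bullet=[\gHom_S(M,\IB)]_{\ge\b0}$. Second, identify it with a cellular model $C^\bullet$ of $\cDx$ after applying $\mathcal{H}om(M^+,-)$. Third, take global sections; the summand $S_{\hat0}$, with $(S_{\hat0})^+=0$, explains why the second isomorphism needs $i\ge 2$. Your treatment of the global sections and of (2) is fine.

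The gap is in the step you yourself call the main obstacle. Both complexes have the same terms $\bigoplus_{\rank x=r}(M_x)^*\otimes K_{\overline{c(x)}}$. Their components are a scalar times (dual transition map) $\otimes$ (restriction). So everything reduces to comparing the scalars $c_{x,z}$ coming from the residual complex with the cellular incidence numbers $[x:z]$. Your proposed check cannot do this: you compare on each $S_w$ separately and then appeal to exactness and finite resolutions. Each $S_w^+$ lives on a closed cell, which is contractible, so on a single $S_w$ any two admissible sign systems give isomorphic complexes. Isomorphisms on the injectives that are not known to be natural with respect to the maps $S_x\to S_z$ cannot be propagated along resolutions.

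For instance, take $P_1$ of Example~\ref{running}, a circle with vertices $y_1,y_2$ and edges $x,z$, and assume $\chara K\ne 2$. The signs $[x:y_1]=[x:y_2]=[z:y_1]=1$, $[z:y_2]=-1$ give a two-term complex with the following properties. It is locally isomorphic to $\cDx$, and it agrees with the correct one after applying $\mathcal{H}om(S_w^+,-)$ for every $w$. Yet for $M=A_P$ its global sections form $K^2\to K^2$ with an invertible matrix. So it is a twisted (M\"obius) model, not $\mathrm{R}\mathcal{H}om(\const_X,\cDx)$.

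What rules out such a twist is the $\hat0$-level of $J^\bullet$. Because $J^\bullet$ (say for $M=A_P$) contains $S_{\hat0}$, the scalars $c_{x,z}$ satisfy the relation $d^2=0$ on every interval of length two in $P$. This includes the intervals $[\hat0,x]$ with $\rank x=2$. The augmented cellular incidence numbers of $\Gamma(P)$, with $[y_i:\hat0]=1$, satisfy the same relations.

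Every $[\hat0,x]$ is boolean, so induction on $\rank x$ shows that a nowhere-zero function on covering pairs satisfying these relations is unique up to $c_{x,z}\mapsto\lambda_z c_{x,z}\lambda_x^{-1}$. Indeed, once the lower ranks are matched, the vector $(c_{x,w})_w$, over the $w$ covered by $x$, is a top-dimensional cycle on the boundary of a simplex, hence determined up to a scalar. The nonvanishing of $c_{x,z}$ follows from the fact that $\gExt_S^\bullet(S_x,\omega_S)$ is $\omega_{S_x}$ concentrated in one degree: read $J^\bullet_{S_x}$ in degrees $\ba$ supported on the vertex set of $z$. A single global rescaling then identifies $(J^\bullet)^+$ with $\mathcal{H}om^\bullet(M^+,C^\bullet)$ for all $M$ at once, naturally in $M$, and no resolution argument is needed.

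A smaller point: softness is not what makes $\mathcal{H}om^\bullet(M^+,C^\bullet)$ compute $\mathrm{R}\mathcal{H}om(M^+,\cDx)$. What you need is $\SExt^{>0}(F,K_{\overline{c(x)}})=0$ for cellular $F$. This holds because $K_{\overline{c(x)}}\cong\mathrm{R}j_*K_{c(x)}$ for $j\colon c(x)\hookrightarrow X$, since closed cells are balls. Hence $\mathrm{R}\mathcal{H}om(F,K_{\overline{c(x)}})\cong(F_{c(x)})^*\otimes K_{\overline{c(x)}}$.
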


\begin{proof}
By appropriately modifying \cite[Theorem 4.2]{Y03} and \cite[Lemma 5.11]{Y08} for the present case, and further using \cite[Theorem 4.1]{Y11}, we obtain the desired result.
\end{proof}

\begin{thm}\label{topological}
The Lyubeznik numbers $\lambda_{i,j}(A_P)$ depend only on the homeomorphism class of $X=|\Gamma(P)|$ and $\chara(K)$.
\end{thm}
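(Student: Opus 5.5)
The plan is to translate the formula of Theorem~\ref{Lyubeznik number} into sheaf-theoretic data on $X$ via Theorems~\ref{local-sheaf coh} and \ref{Ext-correspondence}, following the strategy of \cite{AY}. Put $M_j:=\gExt_S^{p-j}(A_P,\omega_S)\in\Sq A_P$ (Lemma~\ref{Ext is sq}). Since $(A_P)^+=\const_X$, Theorem~\ref{Ext-correspondence}(1) gives
$$M_j^+\cong \SExt^{1-j}(\const_X,\cDx)=\mathcal{H}^{1-j}(\cDx)$$
for $0\le j\le p$. This is a sheaf determined by $X$ and $K$ alone; in fact it is determined by $\chara(K)$, since $\cDx$ with field coefficients depends only on the prime field up to a flat base change. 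By Theorem~\ref{Lyubeznik number} and Proposition~\ref{bass-sq},
$$\lambda_{i,j}(A_P)=\dim_K[\gExt_S^{p-i}(M_j,\omega_S)]_\b0=\dim_K[H_\fm^i(M_j)]_\b0 .$$

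I would split into ranges of $i$. For $i\ge 2$, Theorem~\ref{Ext-correspondence}(1) gives
$$[\gExt_S^{p-i}(M_j,\omega_S)]_\b0\cong \Ext^{1-i}(M_j^+,\cDx)=\Ext^{1-i}(\mathcal{H}^{1-j}(\cDx),\cDx),$$
which is intrinsic to $X$ and $\chara(K)$. Alternatively, Theorem~\ref{local-sheaf coh} gives $[H_\fm^i(M_j)]_\b0\cong H^{i-1}(X;\mathcal{H}^{1-j}(\cDx))$, which is also intrinsic.

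For $i=0,1$, I would use the exact sequence \eqref{local-seq} for $M=M_j$:
$$0\to[H_\fm^0(M_j)]_\b0\to (M_j)_\b0\to H^0(X;M_j^+)\to[H_\fm^1(M_j)]_\b0\to 0 .$$
Here $H^0(X;M_j^+)$ is intrinsic. The task is to show that the middle map, with its source $(M_j)_\b0$, is intrinsic. For $j\ge2$, Theorem~\ref{Ext-correspondence}(1) with $M=A_P$ identifies $(M_j)_\b0=[\gExt^{p-j}_S(A_P,\omega_S)]_\b0$ with $\Ext^{1-j}(\const_X,\cDx)=H^{1-j}(X;\cDx)$. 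Theorem~\ref{Ext-correspondence}(2) identifies the middle map with the natural map
$$H^{1-j}(X;\cDx)\to\Gamma(X;\mathcal{H}^{1-j}(\cDx)).$$
Its kernel and cokernel are therefore topological invariants, which settles $\lambda_{0,j}$ and $\lambda_{1,j}$ for $j\ge 2$.

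The remaining small cases $j\in\{0,1\}$ are the main obstacle, because Theorem~\ref{Ext-correspondence} gives no identification of $(M_j)_\b0$ there. I would handle them through the standard structure of the Lyubeznik table. We have $\lambda_{i,j}=0$ for $i>j$. We have $\lambda_{0,0}$, $\lambda_{0,1}$, $\lambda_{1,1}$ only when $\dim A_P\le1$, and the dimension $d=\dim X+1$ is itself topological. Failing that, I would compute directly: $M_0,M_1$ are squarefree modules supported on the rank $\le1$ part, so their degree-$\b0$ parts can be read off from $\Sq A_P\cong\mdL$ in terms of the $0$-dimensional cells. I would then check that $\lambda_{0,0},\lambda_{0,1},\lambda_{1,1}$ are controlled by the number of isolated points and the connected-component structure of $X$. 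Finally, I would use the identity $\sum_j(-1)^j\lambda_{i,j}$ or the known value $\lambda_{d,d}$ as a consistency check.
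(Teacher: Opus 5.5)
Your treatment of every $\lambda_{i,j}$ with $j\ge 2$ is exactly the argument the paper imports from \cite{AY}, and it is fine. For $i\ge 2$ you apply Theorem~\ref{Ext-correspondence}(1) twice. For $i=0,1$ you use the sequence \eqref{local-seq} for $M_j$, whose middle map Theorem~\ref{Ext-correspondence}(2) identifies with $H^{1-j}(X;\cDx)\to\Gamma(X;\mathcal{H}^{1-j}(\cDx))$.

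The gap is in the cases $j\in\{0,1\}$, which the paper also treats separately and which your proposal does not settle. Your first route rests on a false claim: $\lambda_{0,1}$ can be nonzero when $\dim A_P\ge 2$. Take $P$ to be the face poset of two disjoint edges. Then $A_P\cong K[t_1,\dots,t_4]/(t_1,t_2)\cap(t_3,t_4)$ (two planes meeting at a point), and $M_1=\gExt_S^{p-1}(A_P,\omega_S)\cong K$ is concentrated in degree $\b0$. So $\lambda_{0,1}=\nu^0(\fm,M_1)=1$ although $\dim A_P=2$.

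Your fallback aims at the wrong data, and its key step is not carried out. The space $(M_1)_{\b0}$ is not read off from the $0$-cells. By local duality and \eqref{local-seq} for $A_P$, it is dual to $[H^1_{\fm}(A_P)]_{\b0}\cong\Coker(K\to H^0(X;\const_X))$, which has dimension $\#\pi_0(X)-1$ when $X\neq\emptyset$. Meanwhile $\Gamma(X;M_1^+)=\Gamma(X;\mathcal{H}^0(\cDx))\cong K^{\{\text{isolated points of }X\}}$.

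Through \eqref{local-seq} for $M_1$, these dimensions only determine $\lambda_{0,1}-\lambda_{1,1}$, and no numerical consistency check separates the two numbers. The missing ingredient is the rank of the middle map $(M_1)_{\b0}\to\Gamma(X;M_1^+)$, i.e.\ of the multiplication maps $t_i\colon(M_1)_{\b0}\to(M_1)_{\be_i}$ for isolated vertices $y_i$. This is genuine extension data, and Theorem~\ref{Ext-correspondence}(2) does not supply it for $j=1$. For a point $y_1$ plus a disjoint edge, both graded pieces are $1$-dimensional. The actual module $M_1\cong S_{y_1}$ gives $\lambda_{0,1}=\lambda_{1,1}=0$, whereas $K\oplus\omega_{S_{y_1}}$ would give $1$ and $1$.

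You can close the gap by Matlis duality. Multiplication by $t_i$ sends the class of $t_i^{-1}$ in $[H^1_{\fm}(A_P)]_{-\be_i}$ to the class of the indicator function of the component $c(y_i)$. Dually, the middle map is restriction of sum-zero vectors in $K^{\pi_0(X)}$ to the point components. This yields:
\begin{itemize}
\item $\lambda_{0,0}=1$ if $X=\emptyset$, and $0$ otherwise;
\item $\lambda_{1,1}=1$ if $X$ is a nonempty finite set, and $0$ otherwise;
\item $\lambda_{0,1}=\max(c'-1,0)$, where $c'$ is the number of connected components of $X$ that are not points.
\end{itemize}
All of these are topological, but this computation is exactly what your proposal leaves out.
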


\begin{proof}
Following the argument used in the proof of Theorem 5.3 in \cite{AY}, the theorem follows directly from the theorems in this section and Theorem \ref{Lyubeznik number}. We note that, as in \cite{AY}, the cases $j=0,1$ need to be treated separately.
\end{proof}

\section*{Acknowledgments}
We appreciate the early discussions with Professor Núñez-Betancourt. We are deeply grateful to Professors Mitsuyasu Hashimoto and Mitsuhiro Miyazaki for their valuable comments. ChatGPT (OpenAI) was used only for language editing and supplementary checks of the manuscript, including its mathematical arguments. The authors take full responsibility for the content of the paper.

The first author is supported by JSPS KAKENHI Grant Number 22K03258. The second author is supported by JSPS KAKENHI Grant Number 25K06928.

\end{document}